\documentclass[reqno]{amsart}
\usepackage[margin=3.5cm]{geometry}
\usepackage{amsmath,amsfonts,amssymb, amsthm}
\usepackage{xcolor}
\usepackage{graphicx}
\usepackage{xypic}

\newtheorem{example}{Example}
\newtheorem{convention}{Convention}
\newtheorem{definition}{Definition}
\newtheorem{theorem}{Theorem}
\newtheorem{proposition}{Proposition}
\newtheorem{lemma}{Lemma}
\newtheorem{corollary}{Corollary}
\newtheorem{remark}{Remark}
\newtheorem*{nntheorem}{Theorem}

\newcommand{\Sol}{\operatorname{Sol}}

\newcommand{\trop}{\operatorname{trop}}
\newcommand{\ord}{\operatorname{ord}}

\title[Elementary solutions of OTDEs and vanishing orders]{Elementary solutions of ordinary tropical differential equations, and vanishing orders of  solutions of algebraic differential equations}
\author{Cristhian Garay-L\'opez}

\address{Centro de Investigaci\'on en Matem\'aticas, A.C. (CIMAT) Jalisco S/N, Col. Valenciana CP. 36023 Guanajuato, Gto, M\'exico}

\email{cristhian.garay@cimat.mx}

\author{Johana Luviano-Flores}

\address{Universidad Aut\'onoma Metropolitana-Azcapotzalco. \\Av. San Pablo 180, Col. Reynosa Tamaulipas, Alcald\'ia Azcapotzalco, CP. 02200 Mexico City, México
} 

\email{jlf@azc.uam.mx}

\author{Carla Valencia-Negrete}

\address{Department of Physics and Mathematics, Universidad Iberoamericana Ciudad de M\'{e}xico, Prolongacion Paseo de la Reforma 880, C. P. 01219, Mexico City, M\'exico}

\email{carla.valencia@ibero.mx}
\begin{document}
\maketitle

\begin{abstract}
Our aim is to use tropical differential algebra to systematically build a combinatorial basis for the study of the  set of (formal) power series solutions to nonlinear algebraic ordinary differential equations (over $\mathbb{C}$) expanded around the point $t_0=0\in\mathbb{C}$, which may also be effectively computed using standard tropical algebra. 
    
This paper is divided into two parts. First, given an ordinary  tropical differential equation in one differential variable $P=P(y)$ of (differential) order $k$,  we study the sets $Sol_{\mathbb{B}[\![t^{\Gamma}]\!],k}(P)\supset \mu(Sol_{\mathbb{B}[\![t^{\Gamma}]\!]}(P)\!)$ of tropical elementary  $k$-solutions and  minimal tropical solutions, respectively; we show that these two sets bear many similarities. We do this for tropical solutions $y=\varphi(t)\in \mathbb{B}[\![t^{\Gamma}]\!]$ (with  coefficients in the boolean semifield $\mathbb{B}$) of $P$ having support in different types of submonoids $\Gamma$ of $(\mathbb{R},+,0)$, namely formal ($\Gamma=\mathbb{N}$), Laurent ($\Gamma=\mathbb{Z}$) and Hahn ($\Gamma=\mathbb{R}$). 

Then, given an  ordinary algebraic differential equation $\mathfrak{P}$ (with meromorphic coefficients in one differential variable $\mathfrak{P}=\mathfrak{P}(y)$ and of differential order $k$), we consider the set  $S(\mathfrak{P},0):=ord_t(Sol_{\mathbb{C}[\![t^{\mathbb{R}}]\!]}(\mathfrak{P})\!)\subset\mathbb{R}$ of $t$-adic orders of formal Hahn  solutions of $\mathfrak{P}$, which is an algebraic object that gives  information about the nature of  the germs of solutions of $\mathfrak{P}$ at the point $t_0=0\in\mathbb{C}$. We show that this set  is contained in the set of $t$-adic orders of  Hahn elementary $k$-solutions of its tropicalization $P=trop(\mathfrak{P})$, this is
 $S(\mathfrak{P},0)\subset ord_t(Sol_{\mathbb{B}[\![t^{\mathbb{R}}]\!],k}(P))$. In most cases, the  set $Sol_{\mathbb{B}[\![t^{\mathbb{R}}]\!],k}(P)$ is a finite family of univariate tropical polynomials which indeed can be computed through standard  techniques of tropical algebra.   
\end{abstract}
\tableofcontents
\section{Introduction}

When attempting to find a power series solution to a nonlinear ordinary differential equation (abbr. \textsc{ode}) $\mathfrak{P}$ near a point of interest using traditional methods such as the Frobenius or Painlevé tests, usually it is necessary to guess a form, take derivatives, multiply polynomials, and carefully balance the smallest (lowest order) terms. Say that this point of interest is $t_0=0\in\mathbb{C}$. In Painlevé analysis, when starting a power series solution $\Phi=\Phi(t)$, we first guess a term, say, $c_ppt^p$, for which $p$ is an unknown power and $c_p\in\mathbb{C}^*$, then we evaluate this datum into $\mathfrak{P}$ and check which powers of $t$ appear. For the equation to be valid near $t_0=0$, the terms with the smallest exponents must cancel each other out; this is, at least two terms must have the same smallest exponent so their coefficients can cancel.

 This is the basic principle behind the formal concept of {\it tropical vanishing}, applied to an initial (or indicial) tropical equation constructed out of the pair $(\mathfrak{P},\Phi)$ using  classical valuation theory. Tropical mathematics yield a convenient algebraic language to handle these types of questions. In order to be able to apply the methods from Tropical Differential Algebraic Geometry, we need to  work with \textsc{ode}s $\mathfrak{P}$ which are algebraic (abbr. \textsc{oade}), these objects are special cases of {\it differential polynomials}, which are the elements of a differential ring $\mathfrak{P}\in\mathbb{C}[\![t]\!]\{y\}$ that indicates that  $\mathfrak{P}$ is a sum of differential monomials in a differential variable $y$, whose  coefficients  can be complex formal power series in $t$.

 Up to now, the application of tropical methods for \textsc{ode}s has been mostly focused on equations $\mathfrak{P}\in\mathbb{C}[\![t]\!]\{y\}$ and formal power series solutions $\Phi\in\mathbb{C}[\![t]\!]$ of them (which means that $\mathfrak{P}(\Phi)=0$ as an element of $\mathbb{C}[\![t]\!]$), see for example \cite{AGT16}, and \cite{GaMe} for the non-trivially valued case, but they can be extended to  equations $\mathfrak{P}\in\mathbb{C}(\!(t)\!)\{y\}$ and formal Hahn series solutions $\Phi\in\mathbb{C}[\![t^{\mathbb{R}}]\!]$, in fact, the elements of the set $Sol_{\mathbb{C}[\![t^{\mathbb{R}}]\!]}(\mathfrak{P})=\{\Phi\in \mathbb{C}[\![t^{\mathbb{R}}]\!]\::\:\mathfrak{P}(\Phi)=0\}$ can be seen as generalizations of germs of meromorphic (or Puiseux) functions at $t_0=0\in\mathbb{C}$ which are solutions to $\mathfrak{P}$, and there is a map 
    \begin{equation*}
        Supp:Sol_{\mathbb{C}[\![t^{\mathbb{R}}]\!]}(\mathfrak{P})\xrightarrow[]{}Sol_{\mathbb{B}[\![t^{\mathbb{R}}]\!]}(P),
    \end{equation*}
    where $P=trop(\mathfrak{P})$ is the tropicalization of $\mathfrak{P}$. See Prop. \ref{prop:trop_Hahn}. 

Let us describe more concretely the above terms. If $\mathfrak{P}\in\mathbb{C}(\!(t)\!)\{y\}$ is of the form
\begin{equation}
\label{eq:ODE}
\mathfrak{P}=\mathfrak{P}(y)={\mathcal A}_{M_1}E_{M_1}+\cdots+{\mathcal A}_{M_s}E_{M_s},\quad {\mathcal A}_{M_i}\in\mathbb{C}(\!(t)\!)^*,
\end{equation}
\noindent   with formal Laurent series coefficients ${\mathcal A}_{M_i}=t^{a_{M_i}} \sum_{j\geq 0} {\mathcal A}_{ij} t^j$, $a_{M_i}\in\mathbb{Z}$ and $\ {\mathcal A}_{i0}\neq 0$, then the {\it tropicalization} of $\mathfrak{P}$ is $P=trop(\mathfrak{P})=\bigoplus_i ord_t({\mathcal A}_{M_i})\odot E_{M_i}=\bigoplus_i{a}_{M_i}\odot E_{M_i}$. This is an example of an ordinary tropical differential equation $P=P(y)$ in one differential variable $y$ (abbr. \textsc{otde}), which is also  finite sum of (differential) monomials 
\begin{equation}
\label{eq:otde}
P=P(y)=t^{a_{M_1}}E_{M_1}+\cdots+t^{a_{M_s}}E_{M_s}=\bigoplus_i a_{M_i}\odot E_{M_i},\quad a_{M_i}\in\mathbb{Z}.
\end{equation}

This object has an associated set $Sol(P)\subset \mathcal{P}(\mathbb{N})$ of tropical solutions, which gets naturally ordered with respect to inclusion of sets; the set $\mu(Sol(P)\!)\subset Sol(P)$  of minimal (nonzero) tropical solutions can be considered  as some sort of {\it order-theoretic basis} for $Sol(P)$, since any tropical solution contains a minimal one. 

In \cite{GG26}, the authors started the study of the set  $\mu(Sol(P)\!)$  for the linear case\footnote{Linear homogeneous in $n\geq1$ differential variables $y_1,\ldots,y_n$.}. The key insight was the decomposition
\begin{equation}
\label{eq:decomposition}
\mu(Sol(P)\!)=F(P)\sqcup\mathcal{C}_{\mathbb{Z}_{\geq0}}(P),
\end{equation}
where $\mathcal{C}_{\mathbb{Z}_{\geq0}}(P)$ consists of  the   solutions having ($t$-adic) order at least the differential order of $P$. We can talk about the $t$-adic order of a tropical solution, since the usual $t$-adic valuation $ord_t:\mathbb{C}[\![t]\!]\xrightarrow[]{}\mathbb{N}\cup\{\infty\}$ of formal power series factors through the support map $Supp:\mathbb{C}[\![t]\!]\xrightarrow[]{}\mathcal{P}(\mathbb{N})$ using a map $\mathcal{P}(\mathbb{N})\xrightarrow[]{}\mathbb{N}\cup\{\infty\}$ (also denoted by $ord_t$). See diagram \eqref{eq:ord_comm_with_supp}.

This paper consists of two parts. The first one is the introduction and study of the set $Sol_k(P)$ of elementary $k$-solutions of an \textsc{otde}  $P$  as in \eqref{eq:otde}, where $k$ is the differential order of $P$. These solutions are univariate polynomials, and they showcase the tropical phenomenon that only a finite amount of information is necessary to determine the tropical solutions of $P$. Moreover, there is an inclusion $\mu(Sol(P))\subset Sol_k(P)$, and the set $Sol_k(P)$ can be studied with similar methods, since there is a corresponding  decomposition $Sol_k(P)=Sol_k(P,[k-1])\sqcup Sol_k(P,[k-1]^c)$ compatible with \eqref{eq:decomposition}.

Afterwards we give a complete characterization of the space of \textsc{otde}s $P$ satisfying $\#Sol_k(P)=\#\infty$ within the space of parameters $\mathbb{Z}_S$ of \textsc{otde}s \eqref{eq:otde} with a fixed support $S$; namely, following \cite{GG26}, we describe the set 
\begin{equation}
\label{eq:sing}
Sing(S)=    \{P\in\mathbb{Z}_S\::\:\#Sol_k(P)=\infty\}.
\end{equation}
It is worth mentioning that, once we fix a support set $S$ of order $k$, the family  $\{Sol_k(P)\::\:P\in \mathbb{Z}_S\}$ can be organized  in a bundle $Sol_k(P)\xrightarrow[]{}I_S\xrightarrow[]{\pi_1}\mathbb{Z}_S$, where the corresponding incidence variety $I_S= \{(P,\varphi)\in \mathbb{Z}_S\times \pi_k(\mathbb{B}[\![t]\!])\::\:\varphi\in Sol_k(P)\}$  is {\it tropical semi-algebraic}, and it can be studied from a geometric point of view through a tropical hypersurface $V(\bigodot_{R\in\mathcal{P}(k-1)}\Delta_{S,R})\subset (\mathbb{T}^*)^{\#S}\times \mathbb{T}$ using the family of tropical polynomials $\{\Delta_{S,R}\::\:R\in\mathcal{P}([k-1])\}$ from \eqref{eq:disc}. See Section \ref{sect:discriminants} for more details.

The second part of this paper is motivated by the application of tropical methods to the analytic theory of \textsc{oade}s $\mathfrak{P}$ as in \eqref{eq:ODE}, which naturally leads to the consideration of  tropical solutions to \textsc{otde}s having more general supports. In the  theory of differential equations, a singularity of $\mathfrak{P}$ is a point $t_0\in\mathbb{C}$ where a solution is no longer analytic \cite{Ahl,AF,BCh}.  The problem that Painlevé addressed in 1900 was as follows: ``To determine all algebraic differential equations  of the first, second, and third orders, and so on, whose general integral is uniform.'' \cite{P00}. These are differential equations whose solutions are single-valued, globally defined functions \cite{P00,G12,CM}. The \emph{Painlevé property} of a  differential equation of order $k$ is governed by the behavior of the solutions around \emph{movable singularities} that can only be poles \cite{RGB,JK,FK}.

In this paper we will focus rather on the non-linear \textsc{ode}s, since the linear case has been addressed in \cite{F25} for the case of ADEs, and in \cite{GG26} for the case of \textsc{otde}s (homogeneous in one variable). In general, for the non-linear case, movable singularities can manifest as ordinary poles, essential singularities, or branch points (which can be algebraic or logarithmic) \cite{FK}. Therefore, for an integral to remain ``uniform'' in Pailevé's sense, any singularity $t_0\in \mathbb{C}$ that changes its position when the initial conditions are changed must be strictly forbidden from being a branch point; it can only be an ordinary pole. This is the \emph{strong Painlevé property}, which states that movable singularities are, at most, ordinary poles, which can be accurately represented by Laurent series with integer exponents. In contrast, many physical nonlinear systems exhibit algebraic branch points or fractional resonances. \textsc{ode}s defined in $\mathbb{C}$ with movable algebraic singularities and solutions that can be presented locally as convergent Puiseux series expansions are considered a generalization of the Painlevé property \cite{FK}.

In Section \ref{Sec:fromCtoT} we explain why  formal Laurent solutions $\Phi\in\mathbb{C}(\!(t)\!)=\mathbb{C}[\![t^{\mathbb{Z}}]\!]$ of an \textsc{oade} $\mathfrak{P}$ are an algebraic generalization of germs of solutions $\mathfrak{P}(\Phi)=0$ with $\Phi$ meromorphic at $t_0=0\in\mathbb{C}$, and we show that the tropical formalism extends in such a way that if $\mathfrak{P}(\Phi)=0$ with $\Phi\in\mathbb{C}[\![t^{\mathbb{Z}}]\!]$, then  $Supp(\Phi)\in\mathcal{P}(\mathbb{Z})$ is  a tropical Laurent solution to $trop(\mathfrak{P})$. This type of tropical solution was already proposed by D. Grigoriev in \cite{DG}. In a similar fashion, we can consider the set of   Laurent elementary $k$-solutions $LSol_k(P)\subset \mathcal{P}(\mathbb{Z})$. For applications to  \textsc{oade}s, we will be interested in the case in which $LSol_k(P)$ is finite, so for a fixed support $S$,  we compute the set  

\begin{equation}
\label{eq:sing_laurent}
LSing(S)=    \{P\in\mathbb{Z}_S\::\:\#LSol_k(P)=\infty\}.
\end{equation}

After that, we go all the way up to consider formal Hahn solutions $\Phi\in\mathbb{C}[\![t^{\mathbb{R}}]\!]$ to $\mathfrak{P}$, which  encompass Puiseux series with rational exponents (developed around $t_0=0\in\mathbb{C}$) as a subset and can therefore accommodate the non-integer and fractional behaviors necessary for the weak Painlevé test at $t_0=0\in\mathbb{C}$, where the solution is allowed to have a finite number of branches \cite{CM}. This is how we come to propose the set 
\begin{equation}
\label{eq:OCDE_tadicord}
    S(\mathfrak{P},0):=ord_t(\{\Phi(t)\in\mathbb{C}[\![t^{\mathbb{R}}]\!]\::\:\mathfrak{P}(\Phi(t)\!)=0\})\subset \mathbb{R}
\end{equation}
for  an \textsc{oade} $\mathfrak{P}$ as an algebraic object that gives  information about the nature of  the germs of solutions of $\mathfrak{P}$ at the point $t_0=0\in\mathbb{C}$. We show the inclusion $S(\mathfrak{P},0)\subset ord_t(HSol_k(trop(\mathfrak{P})))$, where the right-hand side is the set of $t$-adic orders of  tropical Hahn elementary $k$-solutions of the tropical differential equation $trop(\mathfrak{P})$, which can be computed using standard tropical algebra methods. We also study the locus of differential polynomials $P\in\mathbb{Z}_S$ for which the set $HSol_k(P)$ is finite in Sections \ref{sec:ths} and \ref{sec:ordatleastk}.

In the non-archimedean   direction, \cite[Corollary 5.3]{GaMe} found that if we replace our Banach field $K=\mathbb{C}$ with an algebraically closed field  of characteristic 0, for which there is a diagram $\xymatrix{v:K\ar@/^/[r]&\mathbb{T}:s\ar@/^/[l]}$
where $v$ is a non-archimedean valuation of rank 1 (i.e. non trivially valued) with a section $s$, e.g., the non-Archimedean Banach field $\mathbb{C}_p$ of the $p$-adic complex numbers. Then the (non-archimedean) radius of convergence of formal power series solutions to systems $\Sigma\subset K[\![t]\!]\{y\}$ can also be found tropically. 

\subsection{Results}
For the first part of the paper. In Theorem \ref{prop:ot_basis_Hahn}, we show that any tropical solution $\varphi\in Sol(P)$ contains both an elementary $k$-solution $\pi_k(\varphi)$ and (at least) a minimal solution. We do this for the case of Hahn tropical solutions, which is the most general case.

Afterwards, for $S=\{M_1,\ldots,M_s\}\subset \mathbb{N}^{k}$ a fixed support set of (differential) order $k$, we consider the parameter space $\mathbb{Z}_S\subset(\mathbb{T}^*)^{\#S}$ of equations \eqref{eq:otde} having support $S$. In Theorem \ref{thm:crit_fin}, we show that $\#Sol_k(P)<\infty$ (and thus $\#\mu(Sol(P)\!)<\infty$) for $P\in\mathbb{Z}_S$ if and only if $P$ is regular, moreover, we describe the locus $X(S)\subset \mathbb{Z}_S$ of non-regular polynomials (those $P$ satisfying $\#Sol_k(P)=\infty$). 

\begin{nntheorem}[See Theorem \ref{thm:crit_fin}]
     If $P\in\mathbb{Z}_S$, then  $\#Sol_k(P)<\infty$ if and only if $P$ is regular.    
\end{nntheorem}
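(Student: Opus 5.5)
The plan is to reduce finiteness of $Sol_k(P)$ to a finite family of one-variable tropical problems, one for each subset $R\in\mathcal{P}([k-1])$, using the tropical polynomials $\Delta_{S,R}$ of \eqref{eq:disc}. By the definition of elementary $k$-solutions, every $\varphi\in Sol_k(P)$ is a univariate polynomial. Its support splits as $R\sqcup\{m\}$ with $R=\varphi\cap[k-1]$ and $m\geq k$ the first exponent beyond the differential order, or possibly with no such tail. Since $\mathcal{P}([k-1])$ is finite, $\#Sol_k(P)=\infty$ holds if and only if some fibre is infinite, namely for some fixed $R$ infinitely many $m\geq k$ make $R\cup\{m\}$ an elementary $k$-solution. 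This is exactly the decomposition $Sol_k(P)=Sol_k(P,[k-1])\sqcup Sol_k(P,[k-1]^c)$ refined over the subsets $R$.

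Next, for fixed $R$, I will evaluate each differential monomial $E_{M}$, $M\in S$, at $\varphi=R\cup\{m\}$. Differentiation $y^{(j)}$ shifts the support by $-j$ and discards negative exponents. So each term $a_M\odot E_M(\varphi)$ has a lowest exponent that is either independent of $m$ (when it only involves exponents coming from $R$) or affine in $m$ with positive integer slope (when the tail $m$ is forced into the relevant derivative). The tropical vanishing condition defining elementary $k$-solutions then says that the minimum of this finite collection of affine functions of $m$ is attained at least twice. This is precisely the statement that $m$ lies in the tropical hypersurface of the univariate tropical polynomial $\Delta_{S,R}(m)$. I will take from the earlier sections the fact that these exponents are the ones encoded by $\Delta_{S,R}$, and that vanishing in the lowest relevant degrees already determines membership in $Sol_k(P)$.

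The core step is the following elementary fact. For a univariate tropical polynomial in $m$, the set of integers $m\geq k$ where the minimum is attained twice is finite if and only if, for $m\gg0$, the minimum is attained by a single term. The set fails to be finite exactly when two terms with the same slope and the same constant are simultaneously minimal for all large $m$, so that $\Delta_{S,R}$ is degenerate at infinity. Breakpoints between terms of distinct slopes are finitely many corners, so they contribute only finitely many $m$. Regularity of $P$ should be, by the paper's definition, the nonvanishing, for every $R$, of this asymptotic degeneracy condition on the coefficients $a_M$ (equivalently, that $(P,\infty)$ avoids $V(\bigodot_R\Delta_{S,R})$ in the appropriate sense). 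Then both directions follow. If $P$ is regular, each fibre is finite, hence $Sol_k(P)$ is finite. If $P$ is not regular, pick the offending $R$; every sufficiently large $m$ gives a solution $R\cup\{m\}$, and these are pairwise distinct.

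The main obstacle I anticipate is the second step: checking carefully that the lowest exponent of $E_M(R\cup\{m\})$ really is $\min(\text{const},\text{affine in }m)$, with the correct cancellations when $m-j<0$ or when $R$ is empty, and that the boundary cases for $m$ near $k$ do not create extra solutions that escape the description. I would handle this by bounding $m$ large enough (larger than all constants appearing) so that the affine terms are either uniformly dominant or uniformly dominated. This leaves only finitely many small $m$ to treat trivially.
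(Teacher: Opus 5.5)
\textbf{There is a genuine gap, and it cannot be closed, because the statement as written is false.}

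Your fibre-by-fibre analysis is correct and follows the paper's own route: Corollary \ref{prop:hcase} and Proposition \ref{prop:nothcase} for $R=\emptyset$, and Remark \ref{prop:ud} for $R\neq\emptyset$. The step you flag as the main obstacle is just Lemma \ref{lemma:explicit_computations}, which gives $ord_t(P(R\cup\{q\}))=\min_{M\in S}\{a_M+c(M,R)+d(M,R)q\}$. From this you correctly derive the following. The fibre over $R$ is infinite if and only if, among the $M$ of smallest slope $d(M,R)$, the smallest value of $a_M+c(M,R)$ is attained at least twice. Hence $\#Sol_k(P)<\infty$ if and only if this happens for no $R\in\mathcal{P}([k-1])$.

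The gap is the sentence asserting that regularity ``should be'' this condition. By Definition \ref{def:ext_regular}, $P$ is non-regular iff $P\in X(S)=Y(S)\cap V(\Delta_{S_0})$. Here $Y(S)=\bigcap_{\emptyset\neq R}Y_R$, and $Y_R$ is the bend locus of $\Delta_{S,R}|_{z=\infty}$, in which only the slope-zero terms (those with $Supp(M)\subset[0,r_s]$) survive. This differs from your condition in two ways. First, ``for no $R$'' means the bad locus is a \emph{union} of the per-$R$ degeneracy loci (together with $V(\Delta_{S_0})$), not an intersection. Second, if $R\neq\emptyset$ but no $M\in S$ has $Supp(M)\subset[0,r_s]$, the smallest slope is positive and the degeneracy is invisible to $Y_R$. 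Your parenthetical reformulation through $(P,\infty)$ has the same defect, since at $z=\infty$ only slope-zero terms survive; for $R=\emptyset$ every term becomes $\infty$.

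Both mismatches give counterexamples to the statement itself.
\begin{itemize}
\item Take $k=2$ and $P=y_0+y_1+y_2$. Then $\Delta_S$ at $P$ equals $\min\{2,1,0\}$, attained once. So $P\notin V(\Delta_{S_0})\supseteq X(S)$, and $P$ is regular. Yet $1+t+t^q\in Sol_2(P)$ for every $q\geq 2$, since the three terms have orders $0,0,q-2$.
\item Even with unions in place of intersections, take $P=y_0y_1+t\,y_2$. Here $\Delta_{S_0}=x_{e_2}$, and every $\Delta_{S,R}|_{z=\infty}$ has at most one finite term. So all these bend loci are empty and $P$ is regular. But $1+t^q\in Sol_2(P)$ for all $q\geq 2$, since both terms have order $q-1$.
\end{itemize}

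What your argument actually establishes is the corrected criterion above, not the equivalence with regularity. The paper's proof breaks at the same place. The converse direction of Remark \ref{prop:ud} tacitly assumes $ord_t(P(R))\neq\infty$. The finiteness conditions over the various $R$, and over $[k-1]^c$, are then combined by intersection rather than union.
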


In Theorem \ref{thm:crit_fin_mero} we show that $\#LSol_k(P)<\infty$ (and thus $\#\mu(LSol(P)\!)<\infty$) for $P\in\mathbb{Z}_S$ if and only if $P$ is Laurent regular, moreover, we find the description of the locus $X_L(S)$ of polynomials $P$ satisfying $\#LSol_k(P)=\infty$.

\begin{nntheorem}[See Theorem \ref{thm:crit_fin_mero}]
     If $P\in\mathbb{Z}_S$, then  $\#Sol_k(P)<\infty$ if and only if $P$ is Laurent regular. 
\end{nntheorem}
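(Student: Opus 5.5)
We read the statement as $\#LSol_k(P)<\infty$ if and only if $P$ is Laurent regular; this is the Laurent analogue of Theorem \ref{thm:crit_fin}, with $LSol_k$ in place of $Sol_k$. The plan mirrors the proof of Theorem \ref{thm:crit_fin}: describe $LSol_k(P)$ fibrewise over the $t$-adic order $r=ord_t(\varphi)\in\mathbb{Z}$. I assume the following facts from the earlier sections.

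\begin{itemize}
\item A Laurent elementary $k$-solution is determined by a finite amount of data: its order $r$ together with its intersection with a window of length $k$ (the truncation $\pi_k$).
\item The tropical derivative acts on $\varphi\subset\mathbb{Z}$ by $n\mapsto n-1$, deleting the exponent $0$.
\item Hence, for every monomial $E_M$ and every $\varphi$ of order $r$, the quantity $ord_t(E_M(\varphi))$ is computed from $r$ and from which of the exponents $0,\dots,k-1$ lie in $\varphi$.
\end{itemize}

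First, for each fixed $r$ there are only finitely many admissible truncations. So $\#LSol_k(P)=\infty$ if and only if $LSol_k(P)$ contains elements of infinitely many distinct orders $r$.

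Second, split $\mathbb{Z}$ into three pieces.
\begin{itemize}
\item The bounded window $[0,k-1]$. It contributes finitely many elementary solutions, so it is irrelevant for finiteness.
\item The region $r\geq k$. Here no exponent of $\varphi$ is killed by up to $k$ derivatives, so every derivative $\varphi^{(j)}$ has order exactly $r-j$. Writing $|M|$ for the degree of $M$ and $w(M)=\sum_j j\,m_j$ for its weight, we get $ord_t(E_M(\varphi))=|M|\,r-w(M)$.
\item The region $r<0$. The same affine formula holds, since negative exponents are never annihilated.
\end{itemize}
In both unbounded regions, $\varphi$ (equivalently its elementary truncation) is a solution exactly when the minimum in $\min_{M\in S}\{a_M+|M|\,r-w(M)\}$ is attained at least twice. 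This is the vanishing condition for the univariate tropical polynomial $\bigoplus_M (a_M-w(M))\odot r^{\odot|M|}$, evaluated on integer points $r$ lying in $(-\infty,-1]\cup[k,\infty)$.

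Third, analyse this piecewise-linear function, which is the main step. A univariate tropical polynomial has finitely many corners. Away from its corners, the minimum is attained twice only when two or more monomials with the same degree $|M|$ also have the same value $a_M-w(M)$, and this common affine function is the dominant one on an unbounded interval.
\begin{itemize}
\item As $r\to+\infty$, the dominant terms are those of minimal degree $|M|$.
\item As $r\to-\infty$, the dominant terms are those of maximal degree $|M|$.
\end{itemize}
Therefore infinitely many orders occur if and only if, among the monomials of minimal degree or among those of maximal degree, the minimum of $a_M-w(M)$ is attained at least twice. Ties on bounded intervals between corners only produce finitely many integer $r$.

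I expect Laurent regularity to be precisely the negation of these two tie conditions. The condition at $+\infty$ should be the one already appearing in the ordinary notion of regularity in Theorem \ref{thm:crit_fin}, and the condition at $-\infty$ is the new Laurent one. This yields both directions at once. If a tie exists, every sufficiently large or sufficiently small integer $r$ carries an elementary solution, for instance the monomial $\{r\}$. If no tie exists, all orders $r$ lie in a bounded set, and there are finitely many truncations over each.

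The main obstacle is bookkeeping rather than ideas. One must verify that the definition of Laurent regular in the paper matches exactly these two dominant-degree tie conditions, possibly phrased through the polynomials $\Delta_{S,R}$ with $R=\emptyset$. One must also check that no subtlety of the truncation $\pi_k$ makes the formula $ord_t(E_M(\varphi))=|M|\,r-w(M)$ fail for $r<0$ when $\varphi$ meets the window $[0,k-1]$. I would handle the latter by noting that only the lowest exponent of each derivative matters, and for $r<0$ that exponent survives all derivatives.
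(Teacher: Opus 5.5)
\paragraph{The gap.} Your first step fails. It is not true that each order $r$ carries only finitely many elementary $k$-series. Nor is $ord_t(E_M(\varphi))$ determined by $r$ and $\varphi\cap[k-1]$.

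For $r\in[k-1]$ and $\varphi\not\subset[k-1]$ one has $\pi_k(\varphi)=R\cup\{q\}$, with $R=\varphi\cap[k-1]$ and $q=ord_t(\varphi_{[k-1]^c})\geq k$. The exponent $q$ is not controlled by $r$ at all. By Lemma~\ref{lemma:explicit_computations}, $ord_t(E_M(R\cup\{q\}))=c(M,R)+d(M,R)\,q$, which depends on $q$ whenever $Supp(M)\not\subset[0,\max R]$. So the window $[0,k-1]$ is not ``irrelevant for finiteness'': a single order can carry infinitely many elementary solutions.

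Concretely, take $k=2$ and $P=y_0+y_1+t^{100}y_2$. Then $S_{min}=S_{max}=\{e_0,e_1,e_2\}$ and your values $a_M-w(M)$ are $0,-1,98$, so your criterion predicts $\#LSol_2(P)<\infty$. But for every $q\geq 2$ the elementary series $\varphi=\{0,1,q\}$ gives $ord_t(P(\varphi))=\min\{0,0,98+q\}$, attained twice. Hence $LSol_2(P)$ is infinite, and all these solutions have order $0$.

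\paragraph{How the paper handles it.} The paper's proof uses the same three-way split as yours. Your treatment of orders $\geq k$ and $<0$ is the content of Corollary~\ref{prop:hcase} and Propositions~\ref{prop:nothcase} and~\ref{prop:nothcase_laurent}. The window part $LSol_k(P,[k-1])=Sol_k(P,[k-1])$, however, is handled separately through Theorem~\ref{thm:crit_fin} and Remark~\ref{prop:ud}. This is where the $Y(S)$ ingredient of (Laurent) regularity comes from. In particular, ordinary regularity is not just the tie condition at $+\infty$.

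You can supply the missing piece with your own asymptotic argument, applied to a different piecewise-linear function. Fix one of the finitely many nonempty $R\subseteq[k-1]$ and consider $q\mapsto\Delta_{S,R}|_P(q)=\min_M\{a_M+c(M,R)+d(M,R)\,q\}$. The set of $q\geq k$ with $R\cup\{q\}\in LSol_k(P)$ is infinite if and only if, among the terms of least slope $d(M,R)$, the minimum of $a_M+c(M,R)$ is attained twice. When slope-$0$ terms exist, this is exactly $P\in Y_R$, i.e.\ $R$ itself solves $P$.

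\paragraph{The final bookkeeping.} A disjoint union is finite iff every piece is. So the exceptional locus you obtain is the \emph{union} of these $R$-wise conditions and the two tie loci. The paper writes $Y(S)=\bigcap_R Y_R$ and $X_L(S)$ as intersections, which does not follow from its own decomposition argument. Indeed, the example above has no tie in $\min\{2+a_{e_0},1+a_{e_1},a_{e_2}\}=\min\{2,1,100\}$, so $P\notin X_L(S)$, yet $LSol_2(P)$ is infinite. Do not try to close the proof by matching the literal definition.
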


The second part of the paper starts at Section \ref{Sec:App}. We introduce the set $S(\mathfrak{P},0)\subset \mathbb{R}$ from \eqref{eq:OCDE_tadicord} for an \textsc{oade} $\mathfrak{P}$. The next result shows that $S(\mathfrak{P},0)$ can be computed from the set elementary Hahn $k$-solutions of the tropicalization $P=trop(\mathfrak{P})$.
 
\begin{nntheorem}[See Theorem \ref{thm:app_tropical_Painleve}]
    If $\mathfrak{P}$ is an \textsc{oade} and $P=trop(\mathfrak{P})$, then $S(\mathfrak{P},0)\subset ord_t(HSol_k(P))$.
\end{nntheorem}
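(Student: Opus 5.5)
The plan is to factor the inclusion through the support map and then through the truncation to elementary solutions. Let $r\in S(\mathfrak{P},0)$, so $r=ord_t(\Phi)$ for some $\Phi\in\mathbb{C}[\![t^{\mathbb{R}}]\!]$ with $\mathfrak{P}(\Phi)=0$.

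\emph{First step.} By Prop.~\ref{prop:trop_Hahn} (the Hahn version of the fundamental theorem direction ``classical solution $\Rightarrow$ tropical solution''), $\varphi:=Supp(\Phi)$ lies in $Sol_{\mathbb{B}[\![t^{\mathbb{R}}]\!]}(P)$ with $P=trop(\mathfrak{P})$. We also need compatibility of orders: $ord_t(\Phi)=\min Supp(\Phi)=ord_t(\varphi)$. This is the Hahn analogue of diagram \eqref{eq:ord_comm_with_supp}; it holds because Hahn supports are well ordered, so the minimum exists.

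\emph{Second step.} Apply Theorem~\ref{prop:ot_basis_Hahn}: every tropical Hahn solution $\varphi$ contains the elementary $k$-solution $\pi_k(\varphi)\in HSol_k(P)$. It then remains to show $ord_t(\pi_k(\varphi))=ord_t(\varphi)$. The truncation $\pi_k$ keeps finitely many of the smallest exponents of $\varphi$, namely those needed to determine the relevant terms of the derivatives up to order $k$. In particular it keeps the minimum of $Supp(\varphi)$. Together with $\pi_k(\varphi)\subset\varphi$, this gives $\min\pi_k(\varphi)=\min\varphi=r$. Hence $r\in ord_t(HSol_k(P))$.

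\emph{Main obstacle.} The delicate point is the second step: verifying that, in the Hahn setting, the definition of $\pi_k$ retains the least element of the support. One also has to handle the degenerate case $\Phi=0$ (order $\infty$), which must either be excluded from $S(\mathfrak{P},0)\subset\mathbb{R}$ or matched with the zero elementary solution. In the Hahn case the tropical derivative behaves differently at exponents $0,1,\dots,k-1$, where differentiation kills the coefficient, than at generic real exponents. So I would check directly from the definition of $\pi_k$ that the minimal exponent is never discarded, even when it lies in $\{0,\dots,k-1\}$. If $\pi_k$ were defined so that it might drop such an exponent, I would instead replace $\pi_k(\varphi)$ by $\pi_k(\varphi)\cup\{\min\varphi\}$. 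I would then argue via Theorem~\ref{prop:ot_basis_Hahn} that this set is still an elementary $k$-solution: adding an exponent already in $\varphi$ that is smaller than or equal to all others does not destroy tropical vanishing, because vanishing is witnessed by the same pair of monomials as for $\varphi$.
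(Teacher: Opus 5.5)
Your proposal is correct and is essentially the paper's proof: the paper factors $ord_t$ on $HSol(\mathfrak{P})$ through $\pi_k\circ Supp$, using that $Supp$ preserves $t$-adic order (Prop.~\ref{prop:trop_Hahn} and diagram~\eqref{diag:relevant}) and that $\pi_k$ does as well. The obstacle you flag does not arise: if $ord_t(\varphi)=r_1\in[k-1]$, then $r_1<\lambda=ord_t(\varphi_{\mathbb{R}\setminus\mathbb{N}})$ forces $r_1\le\min\{k-1,\lfloor\lambda\rfloor\}$, so $r_1$ survives in $R$, while in the remaining cases $\pi_k(\varphi)$ is $\varphi$ or $t^{ord_t\varphi}$; hence your fallback with $\pi_k(\varphi)\cup\{\min\varphi\}$ is never needed.
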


Throughout all the paper, we  divide our support sets $S$ (see Def. \ref{def:supsetk}) in homogeneous\footnote{if all the monomials  of $S$ have the same degree.} of degree $d>1$ (a case which is very similar to the linear one), and $S$ is not homogeneous, in which case we can express $P\in\mathbb{Z}_S$ as the sum of its homogeneous parts: $P=P_{d_1}+\cdots+P_{d_m}$, for some $d_1<\cdots<d_m$, $m>0$. The sets $S_{min}$ and $S_{max}$ of minimal and maximal degree monomials in the support of $P$ will be crucial for our analysis.

\subsection{Roadmap}
In Section \ref{sect:preliminaries} we give some standard preliminaries on tropical algebra and tropical differential algebra.  After that, the paper is divided in two rather independent parts, the first one consists of sections \ref{sec:Gen_prop_Min_Sol} and \ref{sect:F(P)}, and it is mainly an extension  of the analysis of minimal tropical solutions of \textsc{otde}s from \cite{GG26} to the non-linear case. 

The technical core of the paper is Section \ref{sec:Gen_prop_Min_Sol}, and it represents the basis for studying tropical solutions over more general monoids $\mathbb{N}\subseteq\Gamma\subseteq\mathbb{R}$. Here we introduce the set $Sol_k(P)$ of elementary $k$-solutions of $P$, for $k\in\mathbb{Z}_{>0}$ the differential order of $P$, and we show that there is a map $\pi_k:Sol(P)\xrightarrow[]{}Sol_k(P)$. Then in Section \ref{sect:discriminants} we introduce the concept of support set in Definition \ref{def:supsetk}, and introduce a family of tropical polynomials 

    \begin{equation}
    \label{eq:disc}
        \Delta_{S,R}=\bigoplus_{M\in S}|M|_R(z)\odot x_M\in\mathbb{T}[x_M\::\:M\in S,z], \quad R\in\mathcal{P}([k-1]),
\end{equation}
defined in the space $\mathbb{Z}_S\times\overline{\mathbb{Z}}$, where $\mathbb{Z}_S$ is the space of parameters of all the \textsc{otde}s having support $S$.  The  set $Sol_k(P)$ is the disjoint union of two proper subsets, namely $Sol_k(P,[k-1]^c)$ consisting of those whose $t$-adic order does not lie on $\{0,\ldots,k-1\}$ --this is studied in Sect. \ref{Sec:ordgeqk}--, and $Sol_k(P,[k-1])$ consisting  of those whose $t$-adic order lies on $\{0,\ldots,k-1\}$; this is studied in Sect. \ref{sect:F(P)} using the function 

\begin{equation}
\label{eq:disc:specialization}
\Delta_S|_P(z)=\Delta_S(a_{M_1},\ldots,a_{M_s},z):=min_j\{|M_j|_S+a_{M_j}+deg(M_j)z\}.
\end{equation}
Then we prove Theorem \ref{thm:crit_fin} in Sect. \ref{sec.generic_solutions}.

The second part  consists of Section \ref{Sec:App}, and is about potential applications of the first part towards the theory of algebraic ordinary differential equations over the complex numbers. 

In Section \ref{Sec:fromCtoT} we explain how to connect the language of meromorphic solutions of \textsc{oade}s $\mathfrak{P}$ with meromorphic coefficients with the language of tropical differential equations and their tropical (Laurent) solutions. We also introduce the important set $S(\mathfrak{P},0)\subset \mathbb{R}$ from \eqref{eq:OCDE_tadicord} of $t$-adic orders of (germs of)  formal Hahn power series solutions of an algebraic differential equation $\mathfrak{P}$ at the point  $t_0=0\in\mathbb{C}$.

Then we extend the set $Sol_k(P)$ to the set $Sol_{\mathbb{B}[\![t^{\mathbb{Z}}]\!],k}(P)$ of Laurent elementary $k$-solutions. Theorem \ref{thm:crit_fin_mero} is proved in Sect. \ref{sec:trop_LS}. Then starting in Sect. \ref{sec:ths}, we start the study of the set $Sol_{\mathbb{B}[\![t^{\mathbb{R}}]\!],k}(P)$ Hahn elementary $k$-solutions, which embraces the Puiseux case, and where Theorem \ref{prop:ot_basis_Hahn} is proved. To finish, in Sect. \ref{sect:appPPord}, we return the focus  to the set $S(\mathfrak{P},0)$, and we prove Theorem \ref{thm:app_tropical_Painleve}.

\subsection{General conventions}
If $X$ is a set we denote by $\mathcal{P}(X)$ its power set and by $\#X$ its cardinality. Disjoint union is denoted by $A\sqcup B$, and if $Y\subset X$, the relative difference is denoted $X\setminus Y.$

If $(P,\leq)$ is a poset and $x\in P$, we denote by $\langle x\rangle^{\uparrow}=P_{\geq x}=\{y\in P\::\:x\leq y\}$ and $\langle x\rangle^{\downarrow}=P_{\leq x}=\{y\in P\::\:y\leq x\}$. If $(P,\leq)$ is totally ordered, we denote by $\mathcal{P}_{wo}(P)$ the family of well-ordered subsets of $P$.

We consider $\mathbb{R}$ equipped with the usual ordering, and $\mathbb{N}:=\mathbb{Z}_{\geq0}$. If $k\in \mathbb{N}$, we denote by $[k]=\{0,\ldots,k\}$, and  by $\mathbb{N}^{[k]}$ the free $\mathbb{N}$-semimodule $\mathbb{N}^{\oplus k+1}$ with canonical basis $\{e_0,\ldots,e_k\}$, this is, $\mathbb{N}^{[k]}=\bigoplus_{j=0}^k\mathbb{N}\cdot e_j$.

We will denote by $\mathfrak{P}\in\mathbb{C}(\!(t)\!)\{y\}$ a differential polynomial, which generalizes \textsc{oade} with meromorphic coefficients, and by $P=trop(\mathfrak{P})$ the \textsc{otde} that results from tropicalizing it. In the sequel, when talking about tropical  solutions  of \eqref{eq:otde}, we omit the adjective ``tropical'' for brevity.
\section{Preliminaries}
\label{sect:preliminaries}
In this Section we discuss the basic facts from tropical algebraic geometry; a standard source for this Section is \cite{MLBook}.
We denote by $\mathbb{T}=(\mathbb{R}\cup\{\infty\},\min=\oplus,+=\odot)$ the tropical semifield, with  $0_{\mathbb{T}}=\infty$ and $1_{\mathbb{T}}=0$.

\begin{remark}
\label{rem:notation_trop_van}
A sum $a=a_1\oplus\cdots\oplus a_s$ with $a_i\in \mathbb{T}$ vanishes tropically if either
\begin{enumerate}
    \item $a=0_{\mathbb{T}}=\infty$, which happens if and only if $a_i=0_{\mathbb{T}}=\infty$ for all $i$, or 
    \item  $a\neq0_{\mathbb{T}}=\infty$, and  there exists $1\leq i\neq j\leq s$ such that $a=a_i=a_j$.
\end{enumerate}
In order to increase the similarity  of  condition (2) above with the usual algebraic notation, sometimes we will denote it as  $a=a_1\oplus\cdots\oplus a_s\!\!\text{``}\!=\!\!\!\text{''}0_{\mathbb{T}}$.
\end{remark}

The condition $a=a_1\oplus\cdots\oplus a_s\!\!\text{``}\!=\!\!\!\text{''}0_{\mathbb{T}}$ is based in the following principle. Being idempotent, terms cannot be summed to zero through addition in $\mathbb{T}$, rather than subtraction, canceling terms is conceptually equivalent to tying. 

We denote by $\mathbb{T}[x_1,\ldots,x_n]$ the semiring of tropical polynomials in  $n\geq1$ variables $x=(x_1,\ldots,x_n)$.   When possible, we denote tropical polynomials using tropical symbols  $f=\bigoplus_{I\in S} a_I\odot x^{I}$ with $a_I\in\mathbb{T}$, and we denote by $ev_f$ the evaluation map $\mathbb{T}^n\xrightarrow[]{}\mathbb{T}$  induced by $f$, this is, if $a=(a_1,\ldots,a_n)\in \mathbb{T}^n$, then 
\begin{equation}
    ev_f(a):=\min_{I\in S}\{a_I+\langle I,a\rangle\}.
\end{equation}

If $f=\bigoplus_{I\in S} a_I\odot x^{\odot I}$, we will assume that $S\subset \mathbb{N}^n$ is the support of $f$, this is, $S=Supp(f)=\{I\in \mathbb{N}^n\::\:a_I\neq 0_{\mathbb{T}}\}$ for $I\in S$. 

We denote by $V(f)\subset \mathbb{T}^n$   the tropical hypersurface  defined by $f$, namely \begin{equation*}
    V(f)=\{a=(a_1,\ldots,a_n)\in \mathbb{T}^n\::\:ev_f(a)\text{ vanishes tropically}\}.
\end{equation*}

\begin{definition}
    A tropical prevariety  is a finite intersection of tropical hypersurfaces. If $\{f_1,\ldots,f_m\}\subset \mathbb{T}[x_1,\ldots,x_n]$, we denote by $V(f_1,\ldots,f_m)=\bigcap_{i=1}^m V(f_i)$ the associated tropical prevariety.
\end{definition}

Tropical prevarieties are in general not tropical varieties in the traditional sense (see \cite[\S3.2]{MLBook}), but they are (rational) polyhedral sets in $\mathbb{T}^n$

We denote by $\mathbb{T}^*=(\mathbb{R},\odot,0)$ the tropical torus. The tropical hypersurface $V(f)$ consists of two parts: $\{a\in\mathbb{T}^n\::\:ev_f(a)=0_{\mathbb{T}}\}\subset \mathbb{T}^n\setminus (\mathbb{T}^*)^n$, and $\{a\in\mathbb{T}^n\::\:ev_f(a)\!\text{``}\!=\!\!\!\text{''}0_{\mathbb{T}}\}$, which is sometimes referred to as the bend locus of $f$. 

The bend locus of $f$  refers to the set of points where this minimum is attained simultaneously by at least two distinct monomials. If a single monomial attains the minimum, it strictly dominates. In contrast, if two monomials achieve the minimum, they are considered balanced. Thus the bend locus of $f$ is empty unless $f$ has at least two monomials, something which we will assume. Note that  $V(f)\cap (\mathbb{T}^*)^n$ is contained in the  bend locus of $f$. 

\begin{definition}
\label{def:set_of_exponents}
    If  $ev_f(a)\!\!\text{``}\!=\!\!\!\text{''}0_{\mathbb{T}}$,  we denote by $S(a)$ the set of monomials where $ev_f(a)$ reaches its minimum, this is $S(a):=\{I\in S\::\:ev_f(a)=a_I+\langle I,a\rangle\}$.
\end{definition}

\begin{convention}
\label{conv:polynomials}
In general we will try to use different notation for a tropical polynomial $f=\bigoplus_{I\in S} a_I\odot x^{I}$ and for its induced evaluation map $ev_f(x)=min_{I\in S}\{a_I+\langle I,x\rangle\}$, where $\mathbb{T}^n$ is endowed with coordinates $\{x_1,\ldots,x_n\}$. This distinction has the purpose of making a smooth transition between the concepts and notation from commutative algebra and the actual computations using tropical operations.
\end{convention}

\begin{definition}
\label{def:rel_mon}
    A monomial $J\in S$ of a tropical polynomial $f$ is irrelevant if $ev_f(a):=min_{I\in S\setminus\{J\}}\{a_I+\langle I,a\rangle\},$ otherwise we say that it is relevant.
\end{definition}

The binary relation on $\mathbb{T}[x_1,\ldots,x_n]$ defined by $f\sim g$ if and only if $ev_f=ev_g$ is a semiring congruence; we denote by $\mathcal{O}_{\mathbb{T}^n}(\mathbb{T}^n)$ the quotient semiring, which is the semiring of global tropical functions on $\mathbb{T}^n$.

If $\Gamma\subset\mathbb{R}$ is an additive monoid, and $R$ is a commutative semiring with 1, the Mal'cev-Neumann semiring $R[\![t^\Gamma]\!]$ consists of the set of formal sums $\varphi=\sum_{m\in \Gamma}a_mt^m$ with $a_m\in R$ such that $Supp(\varphi)=\{m\in \Gamma\::\:a_m\neq0\}$ is a well-ordered subset of $\Gamma$. This defines a map $Supp:R[\![t^\Gamma]\!]\xrightarrow[]{}\mathcal{P}_{wo}(\Gamma)$.

We define $\overline{\Gamma}=\Gamma\cup\{\infty\}$, which is a sub-semiring of $ \mathbb{T}$, and $ord_t:R[\![t^\Gamma]\!]\xrightarrow{}\overline{\Gamma}$ by $ord_t(0)=\infty$ and $ord_t(\varphi)=min Supp(\varphi)$.

\begin{example}
\label{ex:BPS}
    We denote by $\{0,\infty\}=\mathbb{B}\subset\mathbb{T}$ the boolean semifield. Sometimes we write $1_{\mathbb{B}}=0$ and $0_{\mathbb{B}}=\infty$. The support map $Supp:\mathbb{B}[\![t^\Gamma]\!]\xrightarrow[]{}\mathcal{P}_{wo}(\Gamma)$ is an isomorphism of semirings, so we can express the elements of $\mathbb{B}[\![t^\Gamma]\!]$ either 
\begin{enumerate}
    \item as formal sums $\varphi=\sum_{i\in A}1_{\mathbb{B}}t^i$. Sum and product  of series are performed with the usual definitions,  or 
    \item as sets $A\in\mathcal{P}_{wo}(\Gamma)$. Under this presentation, sum of series becomes union, and product of series becomes Minkowski sum.
\end{enumerate}
In particular, the null series 0 has support $Supp(0)=\emptyset$, and the series $1=t^0$ has support $Supp(1)=\{0\}$. The inverse isomorphism is $e:\mathcal{P}_{wo}(\Gamma)\xrightarrow[]{}\mathbb{B}[\![t^\Gamma]\!]$, which sends $A$ to $e^A=\sum_{i\in A}1_{\mathbb{B}}t^i$. This satisfies $e^{\emptyset}=0$ and $e^0=1$.
\end{example}

 If $K$ is a commutative ring with 1, we denote by $v_0:K\xrightarrow[]{}\mathbb{B}$ the trivial valuation, and we denote also by $v_0$ the induced map $v_0:K[\![t^\Gamma]\!]\xrightarrow[]{}\mathbb{B}[\![t^\Gamma]\!]$ sending $\varphi=\sum_{m\in M}a_mt^m$ to $v_0(\varphi)=\sum_{m\in M}v_0(a_m)t^m$. Note that this is just the support map in disguise, and that the following diagram commutes 
 \begin{equation}
 \label{eq:ord_comm_with_supp}
     \xymatrix{K[\![t^\Gamma]\!]\ar[r]^{Supp}\ar[dr]_{ord_t}&\mathbb{B}[\![t^\Gamma]\!]\ar[d]^{ord_t}\\
    &\overline{\Gamma}.}
 \end{equation}

\subsection{Tropical differential algebra}
\label{sect:TDA}
We give basic definitions from tropical differential algebra adapted for this particular case. In this introduction we follow the notation from Convention \ref{conv:polynomials} with the purpose of making a smooth transition between the objects from differential equations, and the actual computations using tropical mathematics. The source for this material is \cite{CGL}.

A differential monomial in one differential variable $y$ is a monomial in the variables $\{y_i=y^{(i)}\::\:i\in\mathbb{N}\}$, this is, a finite expression of the form 
\begin{equation}
\label{eq:diffmon}
E=\prod_{i\in\mathbb{N}}y_i^{m_i},\:m_i\in\mathbb{N},\:m_i=0\text{ for almost all }i.
\end{equation}

in order to lighten up the notation, it is better to use sub-indexes $y_i=y^{(i)}$ to denote the order of derivation. We can identify \eqref{eq:diffmon} with the vector $(m_0,m_1,\ldots)=M\in\mathbb{N}^{\oplus\mathbb{N}}$ and write $E=E_M$. The differential order of $E_M$ is $ord(E_M)=\max\{i\::\:m_i\neq0\}$, so $M\in \mathbb{N}^{ord(E_M)}$.

An ordinary tropical differential equation in one variable is a finite sum $P=P(y)=\sum_{M_i\in S}t^{a_{M_i}}E_{M_i}=\bigoplus_{M_i\in S} a_{M_i}\odot E_{M_i}$ as in \eqref{eq:otde} consisting of differential monomials \eqref{eq:diffmon}. Recall that the support of $P$ is  $Supp(P)=\{M_1,\ldots,M_s\}\subset \mathbb{N}^{\oplus\mathbb{N}}$, and the differential order of $P$ is $ord(P)=\max\{ord(E_M)\::\:M\in Supp(P)\}$.

\begin{definition}
    We denote by $\mathbb{B}[\![t]\!]=\mathbb{B}[\![t^{\mathbb{N}}]\!]$ the semiring of formal power series with coefficients in $\mathbb{B}$.  We denote by  $\tfrac{d}{dt}:\mathbb{B}[\![t]\!]\xrightarrow[]{}\mathbb{B}[\![t]\!]$ the formal derivation of series.
\end{definition}

The isomorphism $Supp:\mathbb{B}[\![t]\!]\xrightarrow[]{}\mathcal{P}(\mathbb{N})$ from Example \ref{ex:BPS} can be promoted to an isomorphism of pairs $(R,d)$ of semirings $R$ equipped with a derivation $d$. By Example \ref{ex:BPS}, we can express the elements of $\mathbb{B}[\![t]\!]$ either 
\begin{enumerate}
    \item as formal sums $\varphi=\sum_{i\in A}1_{\mathbb{B}}t^i$, where the derivation of series is performed with the usual conventions,  or 
    \item as sets $A\in\mathcal{P}(\mathbb{N})$.
\end{enumerate}

As for the case of tropical polynomials, a differential polynomial $P$ as in \eqref{eq:otde} induces a map  $dev_P:\mathbb{B}[\![t]\!]\xrightarrow[]{}\mathbb{B}[\![t]\!]$, called the differential evaluation, as follows:  if $P=E$ is a single monomial \eqref{eq:diffmon} and $\varphi\in \mathbb{B}[\![t]\!]$, then $dev_E(\varphi)$ is defined by replacing each instance of $y_i$ for $y_i(\varphi)=\tfrac{d^i\varphi}{dt^i}$:
\begin{equation*}
   E(\varphi)= dev_E(\varphi):=\prod_{i\in\mathbb{N}}(\tfrac{d^i\varphi}{dt^i})^{m_i},
\end{equation*}
and then we extend by linearity: $P(\varphi):=dev_P(\varphi)=\sum_it^{a_{M_i}}E_{M_i}(\varphi).$

If $\overline{\mathbb{N}}=\mathbb{N}\cup\{\infty\}\subset\mathbb{T}$,  we denote by $ord_t:\mathbb{B}[\![t]\!]\xrightarrow[]{}\overline{\mathbb{N}}$ is the usual $t$-adic valuation homomorphism. We have a commutative diagram: 
\begin{equation*}
    \xymatrix{\mathbb{B}[\![t]\!]\ar[r]^{dev_P}\ar[dr]_{trop_P}&\mathbb{B}[\![t]\!]\ar[d]^{ord_t}\\
    &\overline{\mathbb{N}}.}
\end{equation*}   

We say that $\varphi\in\mathbb{B}[\![t]\!]$ is a tropical solution of $P$ if the following expression vanishes tropically (see Remark \ref{rem:notation_trop_van}):
\begin{equation}
\label{eq:solution}
    \begin{aligned}
        ord_t(P(\varphi)\!)&=ord_t(\sum_it^{a_{M_i}}E_{M_i}(\varphi)\!)=\bigoplus_{i=1}^s a_i\odot ord_t(E_{M_i}(\varphi)\!)\\
        &=\min_j\{a_j+ ord_t(E_{M_j}(\varphi)\!)\}.
    \end{aligned}
\end{equation}

Continuing with our conventions, we will try to use different notation for a differential polynomial $P=\sum_{M_i\in S}t^{a_{M_i}}E_{M_i}=\bigoplus_{M_i\in S} a_{M_i}\odot E_{M_i}$ and for its induced evaluation map $ord_t(P(\varphi)\!)=\min_{M_i\in S}\{a_{M_i}+ ord_t(E_{M_i}(\varphi)\!)\}$.
 
If $\varphi\in\mathbb{B}[\![t]\!]$ is a solution of $P$, then by Remark \ref{rem:notation_trop_van} we have two possibilities for the tropical vanishing of \eqref{eq:solution}:
\begin{enumerate}
    \item $ord_t(P(\varphi)\!)=0_{\mathbb{T}}$, or
    \item $ord_t(P(\varphi)\!)\!\text{``}\!=\!\!\text{''}0_{\mathbb{T}}$, which appears only when $P$ is not a monomial. 
\end{enumerate}

For $M\in \mathbb{N}^{[k]}$, sometimes we write $M=\sum_{i=0}m_{i}e_i$ and $deg(M)=\sum_im_i$. If $P=\sum_{M_i\in S}t^{a_{M_i}}E_{M_i}$ has no constant term, then $\min ord(M):=\min\{i\::\:m_{i}\neq0\}\geq0$ for every $M$ in its support (since $M\neq0$), so $\min ord(P):=\min_{j}\{\min ord(M_j)\}\geq0$. We have the following result.

\begin{proposition}
\label{prop:equaltozerosols}
    If $P=\sum_{M_i\in S}t^{a_{M_i}}E_{M_i}$ has no constant term, then the set of tropical solutions $\varphi$ of $P$ satisfying $ord_t(P(\varphi)\!)=0_{\mathbb{T}}$ is the set of polynomials in $t$ with coefficients in $\mathbb{B}$ having degree at most $\min ord(P)-1$.
\end{proposition}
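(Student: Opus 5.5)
We prove the two inclusions directly from the definitions. The condition $ord_t(P(\varphi))=0_{\mathbb{T}}=\infty$ means $\min_j\{a_{M_j}+ord_t(E_{M_j}(\varphi))\}=\infty$. Each $a_{M_j}$ is finite, so this is equivalent to $E_{M_j}(\varphi)=0$ in $\mathbb{B}[\![t]\!]$ for every $M_j\in S$. The key structural fact is that $\mathbb{B}[\![t]\!]$ has no zero divisors: under the isomorphism $Supp$, a product corresponds to a Minkowski sum of supports, which is empty iff one factor's support is empty. Hence
\[
E_{M}(\varphi)=\prod_i\bigl(\tfrac{d^i\varphi}{dt^i}\bigr)^{m_i}=0
\iff \tfrac{d^i\varphi}{dt^i}=0 \text{ for some } i \text{ with } m_i\neq 0 .
\]

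Next we describe when $\tfrac{d^i\varphi}{dt^i}=0$ over $\mathbb{B}$. The formal derivative sends $t^n\mapsto n t^{n-1}$. Since $n\cdot 1_{\mathbb{B}}=1_{\mathbb{B}}$ for $n\geq1$ (idempotence), it sends $t^0\mapsto0$ and otherwise $t^n\mapsto t^{n-1}$. On supports,
\[
Supp(\varphi')=\{n-1: n\in Supp(\varphi),\ n\geq1\},
\]
and iterating, $Supp(\varphi^{(i)})=\{n-i: n\in Supp(\varphi),\ n\geq i\}$. So $\varphi^{(i)}=0$ iff $Supp(\varphi)\subset\{0,\ldots,i-1\}$, that is, iff $\varphi$ is a polynomial of degree at most $i-1$. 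This includes $\varphi=0$, which has degree $-\infty$.

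Combining the two steps, $E_M(\varphi)=0$ iff $\deg\varphi\le i-1$ for some $i$ with $m_i\neq0$. Since the degree bounds are nested, this is equivalent to $\deg\varphi\le \min ord(M)-1$. Requiring this for all $M\in S$ gives $\deg\varphi\le\min_j\min ord(M_j)-1=\min ord(P)-1$. Conversely, any such polynomial kills every monomial, so $ord_t(P(\varphi))=\infty$, and it is a tropical solution by case (1) of Remark \ref{rem:notation_trop_van}.

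The hypothesis of no constant term matters only to guarantee that $\min ord(M)$ is defined for every $M\in S$. If $M=0$, then $E_M(\varphi)=1\ne0$ for every $\varphi$, and the set would be empty. When $\min ord(P)=0$ the described set consists only of $\varphi=0$, which is consistent with the convention that a polynomial of degree at most $-1$ is the zero series.

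There is no real obstacle here. The only points needing care are the absence of zero divisors in $\mathbb{B}[\![t]\!]$ and the support description of the derivative over $\mathbb{B}$ (no cancellation, and constants are killed).
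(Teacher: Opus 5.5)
The step ``since the degree bounds are nested, this is equivalent to $\deg\varphi\le \min ord(M)-1$'' is wrong.

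Just before it you correctly showed that $E_M(\varphi)=0$ iff $\deg\varphi\le i-1$ for \emph{some} $i$ with $m_i\neq0$. These conditions get weaker as $i$ grows, so their disjunction is equivalent to the weakest one. That is the condition for the largest index $i=ord(M)=\max\{i\::\:m_i\neq0\}$. Taking the smallest index, as you did, computes the conjunction instead. Concretely, for $E_M=y_0y_1$ and $\varphi=1$ you get $E_M(1)=1\cdot\tfrac{d}{dt}(1)=0$, although $\min ord(M)-1=-1$.

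This also shows that the statement is false as literally written, with the paper's definition $\min ord(P)=\min_j\min\{i\::\:m_{j,i}\neq0\}$. For $P=t^{a}y_0y_1+t^{b}y_1$ one has $\min ord(P)=0$. Yet $P(1)=0$, so $\varphi=1$ satisfies $ord_t(P(\varphi))=0_{\mathbb{T}}$ without being a polynomial of degree at most $-1$. The paper's one-line proof asserts the same unjustified equivalence, ``$E_M(\varphi)=0$ iff $\deg(\varphi)\le\min ord(E)-1$''. So your argument reproduces that error rather than closing it.

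Your first two steps are correct. There are no zero divisors, because a Minkowski sum of nonempty sets is nonempty. And $Supp(\varphi^{(i)})=\{n-i\::\:n\in Supp(\varphi),\ n\ge i\}$. Together they prove the corrected statement: the solutions with $ord_t(P(\varphi))=0_{\mathbb{T}}$ are exactly the polynomials of degree at most $\min_{M\in S}ord(M)-1$, where $ord(M)=\max\{i\::\:m_i\neq0\}$ is the differential order of $E_M$.

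This bound coincides with the stated one exactly when $P$ contains a pure power $y_{i_0}^{m}$ with $i_0=\min ord(P)$. The linear case is an example, since there $e_0\in Supp(P)$. Replace the ``nested bounds'' sentence by $\deg\varphi\le ord(M)-1$, carry $\min_{M\in S}ord(M)$ through the rest, and the proof is complete.
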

\begin{proof}
    Again, $ord_t(P(\varphi)\!)=0_{\mathbb{T}}$ happens if and only if $P(\varphi)=0$ (the null series) if and only if $E_{M_i}(\varphi)=0$ for all $i=1,\ldots,s$. And $E_{M}(\varphi)=0$ iff $deg(\varphi)\leq \min ord(E)-1$. 
\end{proof}

\begin{convention}
\label{conv:shape_of_P}
From now on our  differential polynomials $P=\sum_{i=1}^st^{a_{M_i}}E_{M_i}$ will satisfy: 
\begin{enumerate}
    \item have no constant term (this is relevant for our applications), 
    \item they are not a monomial ($s>1)$.
    \item they are differential equations, i.e. $ord(P)=k\in\mathbb{Z}_{>0}$.
\end{enumerate}
Since we already computed in Prop. \ref{prop:equaltozerosols} the tropical solutions satisfying $ord_t(P(\varphi)\!)= 0_{\mathbb{T}}$, from now on we will be interested only on the tropical solutions satisfying $ord_t(P(\varphi)\!)\text{``} = \text{'' } 0_{\mathbb{T}}$ (in particular, $0\notin Sol(P)$). We will also drop the adjective {\it tropical} in solution, and  refer to them only as solutions.
\end{convention}
\begin{definition}
    For $P=P(y)=\sum_{M_i\in S}t^{a_{M_i}}E_{M_i}$ as in \eqref{eq:otde}, we denote by 
     \begin{equation*}
         Sol(P)=\{\varphi\in\mathbb{B}[\![t]\!]\::\:ord_t(P(\varphi)\!)\text{``} = \text{'' } 0_{\mathbb{T}}\}
     \end{equation*}
endowed with the order induced by inclusion of subsets. 
\end{definition}

\begin{example}[The linear case]
\label{ex:lin_case}
    For us, an \textsc{otde} $P$ is linear of order $k$ if $Supp(P)=\{e_0,\ldots,e_k\}$, this is, if $P=\bigoplus_{i=0}^k a_{i}\odot y_i$ (in particular, it is homogeneous  and generic). In this case, $Sol(P)$ is a $\mathbb{B}$-semimodule, and the set $\mu(Sol(P)\!)$ consists of additive-irreducible elements which gives  some sort of order-theoretic basis.
\end{example}

\begin{remark}
    It is possible to have $Sol(P)=\emptyset$, see \cite[Theorem 3]{GG26} for a description of the case in which $P$ is linear. 
\end{remark}

An important subset of $Sol(P)$ is its set $\mu(Sol(P)\!)$ of minimal elements, which was introduced in \cite{GG26} for $P$ linear (homogeneous, in $n\geq1$ differential variables) and also for systems $\Sigma=\{P_1,\ldots,P_n\}$ of $n$ linear polynomials in the same number of variables. 

In the next section we will see that for $P$ general, it is better to focus in another subset of $Sol(P)$, which is its set $Sol_k(P)$ of elementary $k$-solutions, where $k$ is the differential order of $P$, and for this  we shall focus on the set $S=Supp(P)$ rather than on $P$ itself.

\section{General properties of elementary $k$-solutions}
\label{sec:Gen_prop_Min_Sol}
Let $k\in\mathbb{Z}_{>0}$.   In the  case or a linear differential polynomial $P$ (see Example \ref{ex:lin_case}) the particular structure of the support set  makes possible to bound the number of monomials of every minimal solution, this is one of the reasons why  elementary $k$-series were not considered in that case.

In this Section, we introduce the set of elementary $k$-series $\pi_k(\mathbb{B}[\![t]\!])\subset \mathbb{B}[\![t]\!]$, which are important in the tropical theory and have many properties, for instance, if $\mathbb{Z}_S$ denotes the parameter space of \textsc{otde}s having support $S$ of differential order $k$, and $Sol_k(P)$ is the set of elementary $k$-series which are solutions of $P\in \mathbb{Z}_S$, then  the  elements of $Sol_k(P)$ are polynomials,  and we have $\mu(Sol(P)\!)\subset Sol_k(P)$. Besides there is a {\it bundle} presentation $$Sol_k(P)\xrightarrow[]{}I_S\xrightarrow[]{\pi_1}\mathbb{Z}_S,$$ where $\pi_1:I_S\xrightarrow[]{}\mathbb{Z}_S$ is the first projection from the incidence variety
\begin{equation}
\label{eq:IVar}
   I_S= \{(P,\varphi)\in \mathbb{Z}_S\times \pi_k(\mathbb{B}[\![t]\!])\::\:\varphi\in Sol_k(P)\},
\end{equation}
 and $I_S$ can be described  by some integral points ($\overline{\mathbb{Z}}$-rational points) of certain tropical hypersurfaces inside $(\mathbb{T}^*)^{\#S}\times\mathbb{T}$. 

These tropical algebro-geometric properties makes easier to focus on $Sol_k(P)$ rather than $\mu(Sol(P)\!)$ for the case of a general support $S$; in particular, if $Sol_k(P)$ is finite, then $\mu(Sol(P)\!)$ is also finite, so this study can be considered as an extension of \cite{GG26}.

\subsection{Elementary $k$-series and solutions}
\label{sec:ekseries_and_sols}
Elementary $k$-series  arise from a more detailed study of the family of maps \begin{equation*}
    \begin{aligned}
        ord_t(E_M(-)\!):\mathbb{B}[\![t]\!]&\xrightarrow[]{}\overline{\mathbb{N}}\\
        \varphi&\mapsto ord_t(E_M(\varphi)\!)
    \end{aligned}
\end{equation*}
for all $M\in\mathbb{N}^{[k]}\setminus\{0\}$. If $P$ is an \textsc{otde}  of  differential  order $k\in\mathbb{Z}_{>0}$, then we will show that  for any $\varphi\in \mathbb{B}[\![t]\!]$, the tropical sum $ord_t(P(\varphi)\!)\neq\infty$ from \eqref{eq:solution} depends only on an elementary $k$-series $\pi_k(\varphi)\subseteq \varphi$, and we will use this observation to define the set $Sol_k(P)$ of elementary $k$- solutions of $P$.  

We denote by $[k-1]:=\{0,\ldots,k-1\}$ and by $\mathcal{P}([k-1])$ its power set. Since $\mathbb{N}=[k-1]\sqcup [k-1]^c$, any $\varphi\in\mathbb{B}[\![t]\!]$ can be uniquely decomposed as $\varphi=\varphi_{[k-1]}+\varphi_{[k-1]^c}$, where $\varphi_{[k-1]}=\varphi\cap[k-1]$.

\begin{definition}
\label{def:eksN}
    For $k\in\mathbb{Z}_{>0}$ fixed, we define the map $\pi_k:\mathbb{B}[\![t]\!]\xrightarrow[]{}\mathbb{B}[\![t]\!]$ as  
    \begin{equation}
        \pi_k(\varphi)=\begin{cases} \varphi,&\text{if }\varphi\in \mathcal{P}([k-1]),\\
       t^{ord_t\varphi} &\text{if }ord_t(\varphi)\notin[k-1],\\
           
            \varphi_{[k-1]}+t^{ord_t\varphi_{[k-1]^c}},&\text{otherwise.}
        \end{cases}
    \end{equation}
    The set of  elementary $k$-series is the image $Im(\pi_k)\subset \mathbb{B}[\![t]\!]$  of the map $\pi_k$. If $0\neq\varphi\in Im(\pi_k)$, then it is of the form $\varphi=R\cup B$ with
    \begin{enumerate}
         \item $ord_t(\varphi)\notin[k-1]$, then $R=\emptyset$ and  $B=\{q\}$ with $k\leq q<\infty$,
         \item $ord_t(\varphi)\in[k-1]$, then $\emptyset\neq R\subset [k-1]$ and the subcases 
         \begin{enumerate}
             \item $B=\emptyset$,
             \item $B=\{q\}$ with $k\leq q<\infty$,
         \end{enumerate}
    \end{enumerate}
\end{definition}

We now describe some important properties of the map $\pi_k$.

\begin{lemma}
\label{lemma_computations}
    For $\varphi\in\mathbb{B}[\![t]\!]$ and $M\in\mathbb{N}^{[k]}=\bigoplus_{j=0}^k\mathbb{N}\cdot e_j$, we have:
    \begin{enumerate}
        \item $\pi_k(\varphi)\subseteq \varphi$,
        \item $\pi_k(\pi_k(\varphi)\!)=\pi_k(\varphi)$,
        \item $ord_t(\varphi)=ord_t(\pi_k(\varphi)\!)$
        \item $ord_t(E_M(\varphi)\!)= ord_t(E_M(\pi_k(\varphi)\!)$.
    \end{enumerate}
\end{lemma}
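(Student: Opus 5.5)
Items (1)--(3) follow directly from Definition \ref{def:eksN} by a case split on $\varphi$. If $\varphi\subseteq[k-1]$ (including $\varphi=0$), then $\pi_k(\varphi)=\varphi$ and all three claims are trivial. If $ord_t(\varphi)\notin[k-1]$, then $\pi_k(\varphi)=t^{ord_t\varphi}$. This is a single element of $\varphi$, it has the same order, and applying $\pi_k$ again returns it, since its order is still $\geq k$. In the remaining case $\varphi=\varphi_{[k-1]}+\varphi_{[k-1]^c}$ with both parts nonzero, and $\pi_k(\varphi)=\varphi_{[k-1]}\cup\{q\}$ where $q=\min\varphi_{[k-1]^c}\in\varphi$. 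So $\pi_k(\varphi)\subseteq\varphi$, and the minimum is unchanged because it lies in $\varphi_{[k-1]}$. Moreover $\pi_k(\varphi)\cap[k-1]=\varphi_{[k-1]}$ and $\pi_k(\varphi)\cap[k-1]^c=\{q\}$, so $\pi_k$ fixes it.

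For (4), I first reduce to single derivatives. Since $E_M(\varphi)=\prod_j(\varphi^{(j)})^{m_j}$ and $ord_t$ is a semiring homomorphism on $\mathbb{B}[\![t]\!]$ (over $\mathbb{B}$ there is no cancellation, so $ord_t$ of a product is the sum of the orders, with $\infty$ absorbing), we get $ord_t(E_M(\varphi))=\sum_j m_j\,ord_t(\varphi^{(j)})$. It therefore suffices to show $ord_t(\varphi^{(j)})=ord_t(\pi_k(\varphi)^{(j)})$ for every $0\le j\le k$. Over $\mathbb{B}$ the $j$-th derivative of $\sum_{i\in A}t^i$ is $\sum_{i\in A,\,i\geq j}t^{i-j}$, because the coefficients $i(i-1)\cdots(i-j+1)$ are nonzero exactly when $i\geq j$. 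Hence $ord_t(\varphi^{(j)})=\min\{i\in\varphi: i\ge j\}-j$, with the value $\infty$ if no such $i$ exists.

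The key step is then to check that $\min\{i\in\varphi:i\geq j\}=\min\{i\in\pi_k(\varphi):i\geq j\}$ for each $j\le k$. For $j\le k-1$ the elements of $\varphi$ that are $\ge j$ split into those in $[j,k-1]$, which $\pi_k$ keeps in all cases (in the case $ord_t\varphi\notin[k-1]$ there are none), and those $\ge k$. The minimum of the latter set is $q$, and $q$ is retained. So the minimum over $\{i\ge j\}$ is preserved, including when it equals $\infty$. For $j=k$ only elements $\ge k$ matter, and their minimum $q$ (or $\infty$) is retained. This is where the restriction $M\in\mathbb{N}^{[k]}$, that is $j\le k$, is essential. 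The step needing most care is the bookkeeping that the threshold $j$ never exceeds $k$, so that only the first element beyond $[k-1]$ ever influences the order of a derivative. Once that is in place, (4) follows by summing over $j$ with weights $m_j$.
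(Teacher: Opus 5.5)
Your proof is correct and follows essentially the paper's route. Items (1)--(3) come from the case split in Definition \ref{def:eksN}, and (4) is reduced to $ord_t(\varphi^{(j)})=ord_t(\pi_k(\varphi)^{(j)})$ for $0\le j\le k$. Your uniform formula $ord_t(\varphi^{(j)})=\min\{i\in\varphi: i\ge j\}-j$, combined with the observation that $\pi_k$ preserves both $\varphi\cap[k-1]$ and the least element of $\varphi\cap[k-1]^c$, repackages the paper's case-by-case piecewise computation more cleanly; it also makes explicit the multiplicativity of $ord_t$, which the paper uses tacitly.
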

\begin{proof}
The first three properties are immediate from Definition \ref{def:eksN}. For the fourth one,  it is enough to show that $ord_t(\frac{d^j\varphi}{dt^j})=ord_t(\frac{d^j\pi_k(\varphi)}{dt^j})$ for all $j\in\{0,\ldots,k\}$. Suppose $\varphi\neq0$ , then we have the following cases: 
   \begin{enumerate}
       \item $q=ord_t(\varphi)\notin[k-1]$. Here $ord_t(\frac{d^j\varphi}{dt^j})=q-j$ for all $j\in\{0,\ldots,k\}$.        On the other hand, we have $\varphi_{[k-1]}=0$ and $B=\{q\}$ with $q\geq k$, so $ord_t(\frac{d^j\pi_k(\varphi)}{dt^j})=q-j$  for all $j\in\{0,\ldots,k\}$.
       \item $ord_t(\varphi)\in[k-1]$. We have two sub-cases: 
       \begin{enumerate}
       \item $B(\varphi)=0$. Here $\varphi=\pi_k(\varphi)$, and we are done.
           \item $B(\varphi)=\{q\}$ with $q\geq k$. If $\emptyset\neq \varphi_{[k-1]}=\{0\leq r_1<\cdots<r_s\leq k-1\}$,  we have  
    \begin{equation*}
        ord_t(\frac{d^j\varphi}{dt^j})=\begin{cases}
            r_i-j,&\text{ if }r_{i-1}<j\leq r_i,\\
            q-j,&\text{ if }r_s+1\leq j\\
        \end{cases}
    \end{equation*}
    and this is clearly just $ord_t(\frac{d^j\pi_k(\varphi)}{dt^j})$, which ends the proof.\qedhere
       \end{enumerate}
   \end{enumerate}
\end{proof}

From now on, if $\emptyset\neq R\in\mathcal{P}([k-1])$, we will denote it as $R=\{0\leq r_1<\cdots<r_s\leq k-1\}$.

Lemma \ref{lemma_computations} represents the basis  for what comes next. In particular, this is enough to prove that the set $\mu(Sol(P)\!)$ is some sort of order-theoretic basis for $Sol(P)$ in the following sense.  
\begin{proposition}
\label{prop:ot_basis}
Let $P=\sum_{M_i\in S}t^{a_{M_i}}E_{M_i}$ be as in \eqref{eq:otde} of order $k$. Then we have\begin{equation*}
         Sol(P)=\bigcup_{\varphi\in \mu(\Sol(P)\!)}\Sol(P)\cap\langle \varphi\rangle^{\uparrow}.
     \end{equation*}
\end{proposition}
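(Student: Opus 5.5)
The equality is between a union and $Sol(P)$, so I will prove both inclusions. The inclusion ``$\supseteq$'' is trivial, since each set $Sol(P)\cap\langle\varphi\rangle^{\uparrow}$ is contained in $Sol(P)$. For ``$\subseteq$'' it suffices to show that every $\psi\in Sol(P)$ contains some minimal solution $\varphi\in\mu(Sol(P)\!)$. Then $\psi\in Sol(P)\cap\langle\varphi\rangle^{\uparrow}$.

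\textbf{Step 1 (reduction to elementary $k$-series).} Take $\psi\in Sol(P)$. By Lemma \ref{lemma_computations}(4), $ord_t(E_M(\psi)\!)=ord_t(E_M(\pi_k(\psi)\!)\!)$ for every $M\in S\subset\mathbb{N}^{[k]}$. The tropical sum
\begin{equation*}
\min_j\{a_{M_j}+ord_t(E_{M_j}(\cdot)\!)\}
\end{equation*}
therefore takes the same value at $\psi$ and at $\pi_k(\psi)$. The minimum is also attained by the same set of indices, so tropical vanishing (in the sense of ``$=$'' $0_{\mathbb{T}}$) transfers from $\psi$ to $\pi_k(\psi)$. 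Since $\psi\neq0$, Lemma \ref{lemma_computations}(3) gives $\pi_k(\psi)\neq 0$, and hence $\pi_k(\psi)\in Sol(P)$. By Lemma \ref{lemma_computations}(1), $\pi_k(\psi)\subseteq\psi$.

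\textbf{Step 2 (descending to a minimal element).} By Definition \ref{def:eksN}, $\pi_k(\psi)$ is a \emph{finite} subset of $\mathbb{N}$: it is $R\cup B$ with $R\subset[k-1]$ and $\#B\le1$. The set of solutions contained in $\pi_k(\psi)$ is therefore a nonempty subfamily of the finite poset $\mathcal{P}(\pi_k(\psi)\!)$, ordered by inclusion. Choose an inclusion-minimal element $\varphi$ of this family, for instance one of minimal cardinality. I claim $\varphi\in\mu(Sol(P)\!)$. If $\varphi'\in Sol(P)$ with $\varphi'\subseteq\varphi$, then $\varphi'\subseteq\pi_k(\psi)$, so $\varphi'$ lies in the same family, and minimality forces $\varphi'=\varphi$. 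Thus $\varphi\subseteq\pi_k(\psi)\subseteq\psi$, which completes the proof.

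The main obstacle is conceptual rather than technical. $\mathcal{P}(\mathbb{N})$ has infinite descending chains, so a direct Zorn-type argument in $Sol(P)$ would require showing that intersections of chains of solutions are again solutions, and this can fail. Step 1 sidesteps this by using Lemma \ref{lemma_computations} to land in a finite set below $\psi$, where minimal elements exist automatically. The only point needing care is that the tie condition, not just the value of the minimum, is preserved under $\pi_k$. This holds because each term $a_{M_j}+ord_t(E_{M_j}(\cdot)\!)$ is preserved individually.
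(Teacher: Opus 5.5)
Your proof is correct and follows essentially the same route as the paper: you use Lemma \ref{lemma_computations}(1) and (4) to replace $\psi$ by the finite elementary $k$-solution $\pi_k(\psi)\subseteq\psi$, and then extract a minimal solution from the finite family of solutions contained in it. Your Step 2 simply spells out the detail the paper calls ``immediate'', namely that an inclusion-minimal solution inside $\pi_k(\psi)$ is automatically minimal in all of $Sol(P)$.
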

\begin{proof}
It is enough to show that any $\varphi\in Sol(P)$ contains a minimal solution. 

By point (4) of Lemma \ref{lemma_computations}, we know that both $\varphi$ and $\pi_k(\varphi)$ yield the same value under the map $ord_t(E_M(-)\!)$ for all $M\in\mathbb{N}^{[k]}$, in particular if $ \varphi\in Sol(P)$, then  $\pi_k(\varphi)\in Sol(P)\cap Im(\pi_k)$ since $P$ has order $k$, and since $\pi_k(\varphi)\subset \varphi$ also by (1) of the same Lemma, it is enough to show that elements in $Sol(P)\cap Im(\pi_k)$ contain a minimal solution, but this is immediate since these elements have finite support.
\end{proof}

\begin{remark}
Lemma \ref{lemma_computations} says that $\pi_k$ is basically an {\it interior operator} on the poset $(\mathcal{P}(\mathbb{N}),\subseteq)$, but not quite, since it does not preserve the order.

In the language of subposets of $(\mathcal{P}(\mathbb{N}),\subset)$, we have shown  that $\mu(Sol(P)\!)\subset Sol(P)\subset\mathcal{P}(\mathbb{N})$ is coinitial.
\end{remark}

\begin{definition}
    If $P$ has order $k$, the set of elementary $k$-solutions of $P$ is $Sol_k(P)=Sol(P)\cap Im(\pi_k)$.
\end{definition}

By Prop.  \ref{prop:ot_basis}, the map $\pi_k:\mathbb{B}[\![t]\!]\xrightarrow[]{}\mathbb{B}[\![t]\!]$ restricts to a map $\pi_k:Sol(P)\xrightarrow[]{}Sol_k(P)$. Going from $Sol(P)$ to $Sol_k(P)$ is very useful, not only because elements of $Sol_k(P)$ all have {\it finite support}, but also due to the inclusion
\begin{equation}
    \mu(Sol(P)\!)\subseteq Sol_k(P).
\end{equation}Indeed, if $\varphi\in \mu(Sol(P)\!)$ is not in $Sol_k(P)$, then $\pi_k(\varphi)\in Sol(P)$ satisfies $\pi_k(\varphi)\subsetneq \varphi$, which contradicts the minimality of $\varphi$. Note that the converse is not true, see Example \ref{ex_Grigoriev}.

In order to go further, we need to compute explicitly the values  $ord_t(E_M(\varphi)\!)\in\overline{\mathbb{N}}$  for all $M\in \mathbb{N}^{[k]}$ and $\varphi\in Im(\pi_k)$.

\begin{definition}
\label{def:values_of_evaluations}
    Given $\sum_{j=0}^km_je_j=M\in \mathbb{N}^{[k]}$ and $R\in\mathcal{P}([k-1])$, we define $|M|_R(z)$ as $|M|_\emptyset(z)=\sum_jm_j(z-j)$, and as 
    \begin{equation}
        |M|_R(z)=\sum_{j=0}^{r_1}m_{j}(r_1-j)+\cdots+\sum_{j=r_{s-1}+1}^{r_s}m_{j}(r_s-j)+\sum_{j=r_s+1}^{k}m_{j}(z-j),
    \end{equation}
    if $\emptyset\neq R=\{0\leq r_1<\cdots<r_s\leq k-1\}$. Here $z$ is a variable.
\end{definition}

 Note that every $|M|_R(z)$ can be expressed as a tropical monomial 
\begin{equation}
\label{eq:def_M_R_(z)}
    |M|_{R}(z)=c(M,R)+d(M,R)z=c(M,R)\odot z^{d(M,R)},\quad c(M,R)\in\mathbb{Z}\text{ and } d(M,R)\in\mathbb{Z}_{\geq0},
\end{equation}
indeed, $c(M,R)=\sum_{j=0}^{r_1}m_{j}(r_1-j)+\cdots+\sum_{j=r_{s-1}+1}^{r_s}m_{j}(r_s-j)-\sum_{j=r_s+1}^{k}jm_{j}$ and $d(M,R)=\sum_{j=r_s+1}^{k}m_{j}$  if $\emptyset\neq R=\{0\leq r_1<\cdots<r_s\leq k-1\}$, while $c(M,\emptyset)=-\sum_jjm_j$ and $d(M,\emptyset)=\sum_jm_j$.  Given a pair  $(\emptyset,\infty)\neq(R,q)\in \mathcal{P}([k-1])\times\overline{\mathbb{N}}_{\geq k}$, we  set  
\begin{equation}
\label{eq:def_M_R_infty}
    |M|_{R}(q):=\begin{cases}
    c(M,R)+d(M,R)q&\text{if }q\neq\infty\\
        c(M,R),&\text{if }d=0\\
        \infty,&\text{if }d>0\text{ and }q=\infty.\\
    \end{cases}
\end{equation}
The next result describes the key property of the expressions $|M|_R(z)$ and their evaluations $|M|_{R}(q)$ on pairs $(\emptyset,\infty)\neq(R,q)\in \mathcal{P}([k-1])\times\overline{\mathbb{N}}_{\geq k}$.
\begin{lemma}
\label{lemma:explicit_computations}
    Let $M\in \mathbb{N}^{[k]}$ and $R\cup B=\varphi\in Im(\pi_k)$. Then $ord_t(E_M(\varphi)\!)=|M|_R(ord_t(B)\!)$.
\end{lemma}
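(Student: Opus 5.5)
The plan is to reduce everything to the orders of the derivatives of $\varphi$. Over $\mathbb{B}[\![t]\!]$ the product of nonzero series has order equal to the sum of the orders, and a product containing a zero factor is $0$. Hence
\[
ord_t(E_M(\varphi))=\sum_{j=0}^k m_j\, ord_t\!\left(\tfrac{d^j\varphi}{dt^j}\right),
\]
with the conventions $0\cdot\infty=0$ and $m\cdot\infty=\infty$ for $m>0$. Boolean coefficients never cancel, so $\frac{d^j}{dt^j}$ simply deletes the exponents $<j$ and shifts the others down by $j$. Therefore $ord_t(\frac{d^j\varphi}{dt^j})=\min\{e-j : e\in\varphi,\ e\ge j\}$, or $\infty$ if no such $e$ exists.

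Next I would compute this minimum for $\varphi=R\cup B$, exactly as in the proof of Lemma \ref{lemma_computations}(4). Write $R=\{r_1<\cdots<r_s\}$ and set $r_0=-1$. For $r_{i-1}<j\le r_i$ the smallest element of $\varphi$ that is $\ge j$ is $r_i$, so the order is $r_i-j$. For $j>r_s$ (all $j$ when $R=\emptyset$) the only candidate is $q=ord_t(B)\ge k\ge j$, so the order is $q-j$ when $B=\{q\}$ and $\infty$ when $B=\emptyset$.

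Substituting into the sum gives
\[
\sum_{i}\sum_{j=r_{i-1}+1}^{r_i} m_j(r_i-j)+\sum_{j>r_s} m_j(q-j),
\]
which is exactly $|M|_R(z)$ from Definition \ref{def:values_of_evaluations} evaluated at $z=q$. When $q$ is finite this is $c(M,R)+d(M,R)q$.

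The only delicate step is $B=\emptyset$, i.e.\ $q=\infty$, where $R\neq\emptyset$ since $(R,q)\ne(\emptyset,\infty)$. If $d(M,R)=\sum_{j>r_s}m_j=0$, no vanishing derivative actually occurs in $E_M$, and the value is $c(M,R)$. If $d(M,R)>0$, some factor $\frac{d^j\varphi}{dt^j}=0$ with $m_j>0$ appears, so $E_M(\varphi)=0$ and the order is $\infty$. Both outcomes match the convention \eqref{eq:def_M_R_infty}. The case $\varphi=0$ is excluded (or handled by the same convention), and this completes the plan.
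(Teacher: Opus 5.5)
Your proposal is correct and follows essentially the same route as the paper. Both compute $ord_t\bigl(\tfrac{d^j\varphi}{dt^j}\bigr)$ block by block as in Lemma \ref{lemma_computations}(4), sum these orders with multiplicities $m_j$ to recover $|M|_R(q)$, and treat $B=\emptyset$ through the equivalence $d(M,R)=0 \iff Supp(M)\subset[0,r_s]$. The only difference is that you state explicitly the additivity of $ord_t$ on products in $\mathbb{B}[\![t]\!]$ and the delete-and-shift description of the derivative, which the paper leaves implicit.
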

\begin{proof}
    If $R\cup B=\varphi\in Im(\pi_k)$, it defines the pair $(R,q)=(R,ord_t(B)\!)\in  \mathcal{P}([k-1])\times\overline{\mathbb{N}}_{\geq k}$. We now check the three different cases from Def. \ref{def:eksN}:
    \begin{enumerate}
        \item if $ord_t(\varphi)\notin[k-1]$, then $R=\emptyset$ and  $B=\{q\}$ with $k\leq q<\infty$. Thus $ord_t(E_M(\varphi)\!)=\sum_jm_j(q-j)=|M|_\emptyset(ord_t(B)\!)$, since $q=ord_t(B)$.
        \item if $ord_t(\varphi)\in[k-1]$, then $\emptyset\neq R=\{0\leq r_1<\cdots<r_s\leq k-1\}$. For $M=\sum_j^km_je_j$, we define its support $Supp(M)=\{i\::\:m_i\neq0\}\subseteq[0,k]$. Now $ord_t(E_M(\varphi)\!)$ equals

\begin{equation*}
    \begin{cases}
    \sum_{j=0}^{r_1}m_{j}(r_1-j)+\cdots+\sum_{j=r_{s-1}+1}^{r_s}m_{j}(r_s-j),& Supp(M)\subset [0,r_s], \\
    \sum_{j=0}^{r_1}m_{j}(r_1-j)+\cdots+\sum_{j=r_{s-1}+1}^{r_s}m_{j}(r_s-j)+\sum_{j=r_s+1}^{k}m_{j}(q-j),&Supp(M)\not\subset [0,r_s],B=\{q\}\\
    \infty,&Supp(M)\not\subset [0,r_s], B=0.\\
    \end{cases}
\end{equation*}
    \end{enumerate}

    By  \eqref{eq:def_M_R_(z)}, we have $|M|_{R}(z)=c(M,R)+d(M,R)z$  for some $c(M,R)\in\mathbb{Z}$ and $\sum_{j=r_s+1}^{k}m_{j}=d(M,R)\in\mathbb{Z}_{\geq0}$. If $B=\{q\}\neq0$, then $|M|_R(ord_t(B)\!)=ord_t(E_M(\varphi)\!)$, and if $B=0$, this is, $q=\infty$, then $ord_t(E_M(\varphi)\!)=|M|_R(ord_t(B)\!)$ follows from the observation that  $d(M,R)=0$ if and only if $Supp(M)\subset [0,r_s]$.
\end{proof}

A first question may be to check the structure of the set   $Sol_k(P)$ for a non-linear \textsc{otde} $P$, and to find conditions upon which $Sol_k(P)$ is finite. Since $\mu(Sol(P)\!)\subset Sol_k(P)$, this study also yields results on the set of minimal solutions of $P$; for this purpose we will construct some tropical polynomials in the next section, using the objects $|M|_{R}(z)$ and their evaluations $|M|_{R}(q)$.
\subsection{Discriminants on spaces of parameters}
\label{sect:discriminants}
Following Convention \ref{conv:shape_of_P}, in this section, we consider $P=\sum_{M_i\in S}t^{a_{M_i}}E_{M_i}$ as in \eqref{eq:otde} with $ord(P)=k\in\mathbb{Z}_{>0}$, and $\#Supp(P)=s>1$.  In order to study the  set of elementary $k$-solutions of $P$ from an algebraic perspective, we will abstract these conditions into the following definition.
\begin{definition}
\label{def:supsetk}
    A support set of order $k\in\mathbb{Z}_{>0}$ is a finite subset $\{M_1,\ldots,M_s\}=S\subset\mathbb{N}^{[k]}$ satisfying the following conditions:
\begin{enumerate}
\item $0\notin S$ ($P$ has no constant term)
    \item $s:=\#S>1$ ($P$ is not a monomial), and 
    \item there exists  $\sum_im_ie_i=M\in S$ with $m_k\neq0$ ($P$ is a differential equation of order $k>0$).
\end{enumerate}
\end{definition}

\begin{convention}
    From now on, $k\in\mathbb{Z}_{>0}$ will be fixed, and $S=\{M_1,\ldots,M_s\}\subset\mathbb{N}^{k}$ will denote a support set (of differential order $k$). Also, if $0\neq R\cup B\in\pi_k(\mathbb{B}[\![t]\!])$, we will sometimes identify it with the pair $(\emptyset,\infty)\neq (R,\ord_t(B)\!)\in\mathcal{P}([k-1])\times\overline{\mathbb{N}}_{\geq k}$, as in Lemma \ref{lemma:explicit_computations}.
\end{convention}


If $\sum_{i=0}^{k}m_{i}e_i=M\in \mathbb{N}^{[k]}$, we define $ord(M)=\max\{i\::\:m_i\neq0\}$ and the degree of $M$ is $deg(M)=\sum_im_{i}$.

The torus $\mathbb{Z}^s\cong (\overline{\mathbb{Z}}^*)^s$ equipped with coordinates $\{x_M\::\:M\in S\}=\{x_{M_1},\ldots,x_{M_s}\}$ is the space of parameters of equations \eqref{eq:otde} having support $S$ by identifying $P=\sum_{M\in S} t^{a_M}E_M$ with its vector of coefficients $c(P):=(a_M\::\:M\in S)\in \mathbb{Z}^s$; if this is the case, then  $ord(P)=ord(S)=k$. In order to connect with tropical geometry, we will use the tropical torus $(\mathbb{T}^*)^{s}$ equipped with coordinates $\{x_M\::\:M\in S\}=\{x_{M_1},\ldots,x_{M_s}\}$ instead, and identify  $(\overline{\mathbb{Z}}^*)^s$ with the set  $\mathbb{Z}_S= \mathbb{Z}^{s}\subset (\mathbb{T}^*)^{s}$ of $\mathbb{Z}$-rational points.  We just write $(a_M)=P\in \mathbb{Z}_S$ from now on.

We are now ready to introduce the family of discriminants. We will use the tropical notation. 
\begin{definition}
\label{def:disc_pols}
    For $S$ a support set and $R\in\mathcal{P}([k-1])$, we define 
    \begin{equation*}
        \Delta_{S,R}:=\bigoplus_{M\in S}|M|_R(z)\odot x_M\in\mathbb{T}[x_{M_1},\ldots,x_{M_s},z],
    \end{equation*}
    where  the terms  $|M|_R(z)$ were introduced in Definition \ref{def:values_of_evaluations}.
\end{definition}

We consider the space $(\mathbb{T}^*)^{\#S}\times \mathbb{T}$ endowed with coordinates $\{x_{M_1},\ldots,x_{M_s},z\}$. For each $\Delta_{S,R}$, we get an evaluation function 
\begin{equation*}
    \begin{aligned}
        ev_{\Delta_{S,R}}:(\mathbb{T}^*)^{\#S}\times \mathbb{T}&\xrightarrow[]{}\mathbb{T}\\
        (P=(a_M),q)&\mapsto \min_{M\in S}\{|M|_R(q)+ a_M\},
    \end{aligned}
\end{equation*}
where the evaluations $|M|_R(q)$ come from \eqref{eq:def_M_R_infty},  and  we denote by $V(\Delta_{S,R})$ its tropical hypersurface. 

\begin{proposition}
\label{prop:discs}
    Let $S$ be a support set, and let $R\cup B=\varphi\in Im(\pi_k)$. Then $\varphi\in Sol(P)$ for $P\in\mathbb{Z}_S$ if and only if $(P,ord_t(B)\!)\in V(\Delta_{S,R})$.
\end{proposition}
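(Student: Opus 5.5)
The plan is to reduce membership in $Sol(P)$ to a pointwise identity between the two minima that define each side. Write $P=\sum_{M\in S}t^{a_M}E_M$ with $(a_M)=P\in\mathbb{Z}_S$, let $\varphi=R\cup B\in Im(\pi_k)$, and put $q=ord_t(B)\in\overline{\mathbb{N}}_{\geq k}$. Note that $(R,q)\neq(\emptyset,\infty)$ exactly when $\varphi\neq0$. By \eqref{eq:solution}, we have $ord_t(P(\varphi)\!)=\min_{M\in S}\{a_M+ord_t(E_M(\varphi)\!)\}$. Lemma \ref{lemma:explicit_computations} gives $ord_t(E_M(\varphi)\!)=|M|_R(q)$ for every $M\in S\subset\mathbb{N}^{[k]}$, where the evaluation at $q=\infty$ is read via \eqref{eq:def_M_R_infty}. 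Substituting term by term yields
\begin{equation*}
ord_t(P(\varphi)\!)=\min_{M\in S}\{a_M+|M|_R(q)\}=ev_{\Delta_{S,R}}(P,q).
\end{equation*}
This identity holds not only for the minimum but also for each individual term, indexed by $M\in S$.

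Next I compare the two notions of vanishing. By Convention \ref{conv:shape_of_P}, $\varphi\in Sol(P)$ means $ord_t(P(\varphi)\!)\text{``}=\text{''}0_{\mathbb{T}}$: the minimum is finite and attained by at least two distinct monomials $M\neq M'$ in $S$. Since the two minima agree term by term, this holds exactly when $ev_{\Delta_{S,R}}(P,q)$ is finite and attained at two distinct indices. That is precisely the condition that $(P,q)$ lies in the bend locus of $\Delta_{S,R}$, so $(P,q)\in V(\Delta_{S,R})$. For the converse direction, I also need that a point of $V(\Delta_{S,R})$ with $P\in(\mathbb{T}^*)^{s}$ cannot satisfy the other alternative $ev=\infty$. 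If it did, then $a_M+|M|_R(q)=\infty$ for all $M$, which would mean $ord_t(P(\varphi)\!)=\infty$. That case is excluded from $Sol(P)$ by Convention \ref{conv:shape_of_P}, and I would point this out explicitly.

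The main obstacle is bookkeeping at the boundary $q=\infty$, i.e.\ when $B=0$. There, $\Delta_{S,R}$ is evaluated at the point $z=\infty$ of $\mathbb{T}$, and one must check that \eqref{eq:def_M_R_infty} is the correct reading of the tropical monomial $c(M,R)\odot z^{d(M,R)}$. This means the monomial equals $c(M,R)$ when $d(M,R)=0$ (since $z^0=1_{\mathbb{T}}$) and equals $\infty$ when $d(M,R)>0$. With that reading, Lemma \ref{lemma:explicit_computations} applies uniformly.

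A second boundary issue arises when $ev_{\Delta_{S,R}}(P,\infty)=\infty$, which happens when every $M\in S$ has $d(M,R)>0$. In this case the point lies in $V(\Delta_{S,R})$ by the first clause of Remark \ref{rem:notation_trop_van}, yet $\varphi$ is not a solution in the sense of Convention \ref{conv:shape_of_P}. The cleanest fix is to read $V(\Delta_{S,R})$ here as its bend locus (finite tropical vanishing), matching the convention that only solutions with $ord_t(P(\varphi)\!)\text{``}=\text{''}0_{\mathbb{T}}$ are counted. I would add a remark that under this reading the equivalence is exact. Alternatively, one can note that these points correspond to polynomial $\varphi$ of degree below $\min ord(P)$, as in Prop. \ref{prop:equaltozerosols}, so that under the weaker convention $ord_t(P(\varphi)\!)=\infty$ also counts as tropical vanishing and the statement holds verbatim.
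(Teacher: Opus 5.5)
Your proof is correct and follows the paper's argument. Lemma \ref{lemma:explicit_computations} identifies $a_M+ord_t(E_M(\varphi))$ with $a_M+|M|_R(ord_t(B))$ term by term, so $ev_{\Delta_{S,R}}(P,ord_t(B))=ord_t(P(\varphi))$ with the same set of minimizing monomials, and tropical vanishing of one is equivalent to tropical vanishing of the other.

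Your extra observation about the case $B=0$ with $d(M,R)>0$ for all $M\in S$ is a genuine imprecision that the paper's one-line proof glosses over: there the evaluation is $\infty$, so the point lies in $V(\Delta_{S,R})$ even though $\varphi\notin Sol(P)$ under Convention \ref{conv:shape_of_P}. Reading $V(\Delta_{S,R})$ as the bend locus, as the paper itself does for $Y_R$, is the right fix.
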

\begin{proof}
We have $ev_{\Delta_{S,R}}(P,ord_t(B)\!)=\min_{M\in S}\{|M|_R(ord_t(B)\!)+ a_M\}$, and from Lemma \ref{lemma:explicit_computations}, we have that  $ord_t(E_M(\varphi)\!)=|M|_R(ord_t(B)\!)$ for every $M\in \mathbb{N}^{[k]}$ and every  elementary $k$-solution $\varphi=R\cup B$, so $ev_{\Delta_{S,R}}(P,ord_t(B)\!)=\min_{M\in S}\{ord_t(E_M(\varphi)\!)+ a_M\}=ord_t(P(\varphi)\!)$, and one vanishes tropically if and only if the other does.
\end{proof}

The relevance of the set $Sol_k(P)$ over the set $\mu(Sol(P)\!)$ stems from the fact that, once we fix a support set $S$ of order $k$, the family  $\{Sol_k(P)\::\:P\in \mathbb{Z}_S\}$ can be organized  in a bundle $Sol_k(P)\xrightarrow[]{}I_S\xrightarrow[]{\pi_1}\mathbb{Z}_S$, where by Prop. \ref{prop:discs}, the incidence variety $I_S= \{(P,\varphi)\in \mathbb{Z}_S\times \pi_k(\mathbb{B}[\![t]\!])\::\:\varphi\in Sol_k(P)\}$ from \eqref{eq:IVar} admits the following description:

\begin{equation*}
     I_S=(V(\Delta_{S,\emptyset})\cap(\mathbb{Z}^s\times \mathbb{Z}_{\geq k})\!)\cup\bigcup_{\emptyset\neq R\subseteq[k-1]}V(\Delta_{S,R})\cap(\mathbb{Z}^s\times\overline{\mathbb{Z}}_{\geq k})\subset V(\bigodot_{R\in\mathcal{P}(k-1)}\Delta_{S,R})\cap(\mathbb{Z}^s\times\overline{\mathbb{Z}}_{\geq k}).
 \end{equation*}

 So, the family the family  $\{Sol_k(P)\::\:P\in \mathbb{Z}_S\}$ is tropical semi-algebraic, and it can be studied from a geometric point of view through the diagram $$\mathbb{T}\xrightarrow[]{}V(\bigodot_{R\in\mathcal{P}(k-1)}\Delta_{S,R})\xrightarrow[]{\pi_1}(\mathbb{T}^*)^s.$$

 This is the key property behind the family tropical hypersurfaces $\{V(\Delta_{S,R})\subset (\mathbb{T}^*)^{\#S}\times \mathbb{T}\::\:R\in\mathcal{P}([k-1])\}$, and in the coming sections we will study their properties for a fixed support set $S$ of differential order $k$. Since it is cumbersome keeping track of the integer points of the tropical hypersurfaces $V(\Delta_{S,R})\subset (\mathbb{T}^*)^{\#S}\times \mathbb{T}$, sometimes we will not indicate the intersections.

 \begin{remark}
     The family $\{\Delta_{S,R}\::\:R\in\mathcal{P}([k-1])\}$ for a fixed support set $S$ is a direct generalization of the family of polynomials $\{A_j=min_{0\leq i\leq min\{j,k\}}\{x_i+(j-i)\}\}_{j=0,\ldots,k}$ introduced in \cite[Definition 5]{GG26} for the linear case (see our conventions in Example \ref{ex:lin_case}). See Example \ref{ex:A_k}.
\end{remark}

\subsection{Minimal solutions $ord(\varphi)\notin\{[ord(P)-1]\}$}
\label{Sec:ordgeqk}
In \cite{GG26}, the set $\mathcal{C}_{\mathbb{Z}_{\geq0}}(P)$  of minimal solutions of $P$ having $t$-adic order at least $ord(P)$ was introduced. Here we do the same for $Sol_k(P,[k-1]^c)$, but using the dichotomy introduced in Lemma \ref{lemma_computations},  if $ord_t(\varphi)$ falls on $[k-1]=\{0,\ldots,k-1\}$ or on $[k-1]^c$; this will be useful in the coming sections.

\begin{definition}
   We denote by $Sol_k(P,[k-1]^c)$ (respectively by $\mu(P,[k-1]^c)$) the set  of elementary $k$-solutions of $P$ (respectively the set  of minimal solutions of $P$) having $t$-adic order away from $[k-1]$.
\end{definition}

In this section, we fix a support set $S$ and study the  set $\mu(P,[k-1]^c)$ as $P$ varies in $\mathbb{Z}_S$. In order to do so,  we use the polynomial from Def. \ref{def:disc_pols} associated to elementary $k$-series $\varphi=R\cup B$ of shape $R=\emptyset$ and $B=\{q\}$, this is $\Delta_{S,\emptyset}(x_{M_1},\ldots,x_{M_s},z):=\bigoplus_j|M_j|_\emptyset(z)\odot x_{M_j}$, where 
$$|M|_\emptyset(z)=\sum_jm_j(z-j)=\sum_jm_j(z-k)+\sum_jm_j(k-j),$$
by Definition \ref{def:values_of_evaluations}. It will be convenient to use the following alternative presentation \eqref{eq:polyinw} below for the evaluation function $ev_{\Delta_{S,\emptyset}}:(\mathbb{T}^*)^{\#S}\times \mathbb{T}^*\xrightarrow[]{}\mathbb{T}$ induced by $\Delta_{S,\emptyset}$ (note that $q\neq\infty$), using the change of variables $w=z-k$
\begin{equation}
\label{eq:polyinw}
    \Delta_{S,\emptyset}(x_{M_1},\ldots,x_{M_s},w):=min_j\{|M_j|_k+deg(M_j)w+x_{M_j}\},
\end{equation}
where for $\sum_{i=0}^{k}m_{j,i}e_i=M_j\in S$,  we write  $deg (M_j)=\sum_{i=0}^{k}m_{j,i}$, and  $|M_j|_k:=\sum_{i=0}^{k}(k-i)m_{j,i}$.   
\begin{proposition}
\label{prop:poly}
Let $P\in\mathbb{Z}_S$. If $\varphi\in \mu(P,[k-1]^c)$, then $\varphi=t^q$ for some $q\geq k$. Moreover, $t^q\in \mu(P,[k-1]^c)$  if and only if $(P,q-k)\in V(\Delta_{S,\emptyset}(x_{M_1},\ldots,x_{M_s},w)\!)$.
\end{proposition}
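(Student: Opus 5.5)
The plan is to derive both parts of the statement from Lemma \ref{lemma_computations} and Proposition \ref{prop:discs}, and then pass to the variable $w=z-k$.

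For the first assertion, take $\varphi\in\mu(P,[k-1]^c)$ and set $q=ord_t(\varphi)$. By hypothesis $q\notin[k-1]$, so $q\geq k$. Definition \ref{def:eksN} then gives $\pi_k(\varphi)=t^q$. By Lemma \ref{lemma_computations}, $\pi_k(\varphi)\subseteq\varphi$ and $ord_t(E_M(\varphi))=ord_t(E_M(t^q))$ for every $M\in\mathbb{N}^{[k]}$. Since $P$ has order $k$, every monomial of $P$ lies in $\mathbb{N}^{[k]}$, so $ord_t(P(\varphi))=ord_t(P(t^q))$, and the tropical vanishing of one is equivalent to that of the other. Hence $t^q\in Sol(P)$, exactly as in the proof of Proposition \ref{prop:ot_basis}. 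Minimality of $\varphi$ together with $t^q\subseteq\varphi$ forces $\varphi=t^q$.

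For the equivalence, I would first note that for $q\geq k$ the series $t^q$ is a solution if and only if it is a minimal solution. Its only proper subseries is $0$, and $0\notin Sol(P)$ by Convention \ref{conv:shape_of_P}: indeed $ord_t(P(0))=\infty$, which is not the ``$=$''$0_{\mathbb{T}}$ case. Since $ord_t(t^q)=q\notin[k-1]$, this yields $t^q\in\mu(P,[k-1]^c)$ if and only if $t^q\in Sol(P)$.

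Next, $t^q=\emptyset\cup\{q\}$ is an elementary $k$-series with $R=\emptyset$ and $ord_t(B)=q$. Proposition \ref{prop:discs} therefore gives $t^q\in Sol(P)$ if and only if $(P,q)\in V(\Delta_{S,\emptyset})$ in the coordinates $(x_M,z)$. It remains to translate to $w=z-k$. For $M_j\in S$ I would verify the identity
\[
|M_j|_\emptyset(q)=\sum_i m_{j,i}(q-i)=\deg(M_j)(q-k)+|M_j|_k .
\]
This shows that $ev_{\Delta_{S,\emptyset}}(P,q)$ coincides term by term with the expression \eqref{eq:polyinw} evaluated at $w=q-k$. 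Because the individual terms agree, the sets of indices attaining the minimum also agree, so tropical vanishing is preserved. All values here are finite because $q<\infty$ and $P\in\mathbb{Z}_S$, so only the ``$=$''$0_{\mathbb{T}}$ form of vanishing can occur.

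No step is a real obstacle. The only point that needs care is the remark that $t^q$ has no nonzero proper subseries, so that being a solution and being a minimal solution coincide.
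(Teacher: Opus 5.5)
Your proposal is correct and follows essentially the same route as the paper. The paper invokes the inclusion $\mu(Sol(P))\subset Sol_k(P)$, whose justification is exactly your $\pi_k$ argument, to see that such minimal solutions are monomials $t^q$ with $q\geq k$. It then notes that monomials are automatically minimal and applies Proposition \ref{prop:discs} with the shift $w=z-k$. Your identity $|M_j|_\emptyset(q)=\deg(M_j)(q-k)+|M_j|_k$ simply writes out the change of variables that the paper leaves implicit.
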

\begin{proof}
The inclusion $\mu(Sol(P)\!)\subset Sol_k(P)$ induces  $\mu(P,[k-1]^c)\subset Sol_k(P,[k-1]^c)$, and it follows from Def. \ref{def:eksN} that any element $\varphi\in Sol_k(P,[k-1]^c)$ is a monomial, so it is minimal, and we get $\mu(P,[k-1]^c)= Sol_k(P,[k-1]^c)$.

Let $t^q$ with $q\geq k$, then it follows from Prop. \ref{prop:discs} that $t^q\in Sol_k(P)$ if and only if $(P,q)\in V(\Delta_{S,\emptyset}(x_{M_1},\ldots,x_{M_s},z)\!)$, but this is equivalent to  $(P,q-k)\in V(\Delta_{S,\emptyset}(x_{M_1},\ldots,x_{M_s},w)\!)$ since $w=z-k$. Finally, note that , which ends the proof.
\end{proof}

We have an identification of the incidence variety 
\begin{equation}
\label{eq:incidence_1}
    \{(P,q)\in\mathbb{Z}_S\times\mathbb{Z}_{\geq k}\::\:q\in\mu(P,[k-1]^c)\}\longleftrightarrow V(\Delta_{S,\emptyset})\cap \mathbb{Z}^s\times \mathbb{Z}_{\geq0}.
\end{equation}

\begin{definition}
\label{def:degrees_of_a_support_set}
 We denote by   $deg(S)=\{deg(M)\::\:M\in S\}=\{d_1<\cdots<d_m\}$.  If $deg(S)=\{d\}$, then we say that $S$ is homogeneous of degree $d$.
\end{definition}

If $S$ is homogeneous, the specialization 
\begin{equation}
\label{eq:disc_homogeneous}
    \Delta_{S}:=\Delta_{S,\emptyset}(x_{M_1},\ldots,x_{M_s},0)=min_j\{\sum_{i=0}^{k}(k-i)m_{j,i}+x_{M_j}\}
\end{equation}
controls the cardinality of the set $\mu(P,[k-1]^c)$ in the  sense of the following result.
\begin{corollary}
\label{prop:hcase}
    Let $S$ be homogeneous. If $P\in\mathbb{Z}_S$, then $ \mu(P,[k-1]^c)$ satisfies \begin{equation*}
    \mu(P,[k-1]^c)=\begin{cases}
        \emptyset, &\text{ if }P\notin V(\Delta_S),\\
        \{t^{k+i}\::\:i\in \mathbb{Z}_{\geq0}\},&\text{ otherwise}.\\
    \end{cases}
\end{equation*} 
\end{corollary}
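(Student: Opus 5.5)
The plan is to reduce the statement to Proposition \ref{prop:poly} and then use homogeneity to eliminate the dependence on $w$.

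By Proposition \ref{prop:poly}, every element of $\mu(P,[k-1]^c)$ is a monomial $t^q$ with $q\geq k$. Moreover, for such $q$ we have $t^q\in\mu(P,[k-1]^c)$ if and only if $(P,q-k)\in V(\Delta_{S,\emptyset}(x_{M_1},\ldots,x_{M_s},w))$. It therefore suffices to determine, for a fixed $P=(a_M)\in\mathbb{Z}_S$, the set of $w=q-k\in\mathbb{Z}_{\geq0}$ for which
\[
\min_j\{|M_j|_k+deg(M_j)w+a_{M_j}\}
\]
vanishes tropically.

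Since $S$ is homogeneous of degree $d$, we have $deg(M_j)=d$ for all $j$. The expression above then equals
\[
dw+\min_j\{|M_j|_k+a_{M_j}\}=dw+ev_{\Delta_S}(P),
\]
with $\Delta_S$ as in \eqref{eq:disc_homogeneous}. The term $dw$ is a common finite shift of every entry. So the set of indices $j$ attaining the minimum is independent of $w$ and coincides with the set of indices attaining the minimum in $ev_{\Delta_S}(P)$. The minimum is finite because $P\in\mathbb{Z}_S$ has finite coordinates, so the case $ev=\infty$ of Remark \ref{rem:notation_trop_van} never occurs. Hence tropical vanishing at $(P,w)$ holds if and only if the minimum in $ev_{\Delta_S}(P)$ is attained at least twice, that is, if and only if $P\in V(\Delta_S)$.

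Combining these facts gives the dichotomy. If $P\notin V(\Delta_S)$, then no $q\geq k$ works and $\mu(P,[k-1]^c)=\emptyset$. If $P\in V(\Delta_S)$, then every $w=i\in\mathbb{Z}_{\geq0}$ works, so $\mu(P,[k-1]^c)=\{t^{k+i}\::\:i\in\mathbb{Z}_{\geq0}\}$. None of these steps is a real obstacle. The only point needing care is that the set of minimizing monomials is invariant under the uniform shift $dw$, and this uses that all degrees agree and that $w$ is finite (note $q\neq\infty$ here).
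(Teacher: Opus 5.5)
Your proposal is correct and follows essentially the same route as the paper. You reduce via Proposition~\ref{prop:poly} to tropical vanishing of $\Delta_{S,\emptyset}$ at $(P,q-k)$, and then use homogeneity to write this evaluation as $ev_{\Delta_{S,\emptyset}}(P,0)+d(q-k)$, so that vanishing does not depend on $q$; your remark that the minimum is finite because $P$ has finite coordinates is a harmless detail the paper leaves implicit.
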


\begin{proof}
Let $S=\{M_1,\ldots,M_s\}$ be homogeneous of degree $d$.
By Prop. \ref{prop:poly}, we have that if $t^q\in \mu(P,[k-1]^c)$ and  $r=q-k\geq0$, then 
\begin{equation*}
        ev_{\Delta_{S,\emptyset}}(c(P),r)=min_j\{|M_j|_k+dr+a_j\}=ev_{\Delta_{S,\emptyset}}(c(P),0)+dr,
    \end{equation*}
    
and  $ev_{\Delta_{S,\emptyset}}(c(P),r)$ vanishes tropically if and only if $ev_{\Delta_{S,\emptyset}}(c(P),0)$ vanishes tropically.
\end{proof}

\begin{example}
\label{ex:A_k}
    In the  case or a linear differential polynomial $P$ (see Example \ref{ex:lin_case}), we have 
    \begin{equation*}
        \Delta_{S,\emptyset}(x_{e_0},\ldots,x_{e_k},w)=min_j\{(k-j)+w+x_{e_j}\}
    \end{equation*} 
    and $\Delta_{S,\emptyset}(x_{e_0},\ldots,x_{e_k},w)|_{w=0}=min_j\{(k-j)+x_{e_j}\}$  is precisely the polynomial $A_k$ appearing in  \cite[Theorem 1]{GG26}.
\end{example}

What happens with $\mu(P,[k-1]^c)$ if $P\in\mathbb{Z}_S$ and $S$ is not homogeneous? For every $P=(a_M)\in\mathbb{Z}_S$, we can consider the fiber $\pi^{-1}(P)\cong\mathbb{T}^*$ of the first projection $\pi_1:(\mathbb{T}^*)^s\times\mathbb{T}^*\xrightarrow[]{}(\mathbb{T}^*)^s$, so that $\mu(P,[k-1]^c)$ correspond to the $\mathbb{Z}_{\geq0}$ points in $\pi^{-1}(P)\cap V(\Delta_{S,\emptyset})\subset \mathbb{T}^*$. Consider now the   specialization \eqref{eq:disc:specialization} of \eqref{eq:disc}:
\begin{equation*}
    \Delta_{S,\emptyset}|_P(w)=\Delta_{S,\emptyset}(a_{M_1},\ldots,a_{M_s},w):=min_j\{|M_j|_k+a_{M_j}+deg(M_j)w\}
\end{equation*}

To keep notation simple, we will denote $\Delta_S|_P(w)$ the evaluation function $\mathbb{T}^*\xrightarrow[]{}\mathbb{T}$ induced by $\Delta_{S,\emptyset}|_P(w)$, instead of $ev_{\Delta_{S,\emptyset}|_P(w)}$. This  evaluation function  in general {\it is not a tropical polynomial}, since there may be many monomials $|M_j|_k+a_{M_j}+deg(M_j)w$ with the same degree in $w$ (See Remark \ref{rem:coro}). For this reason,  we will keep this notation as an expanded tropical sum. Nevertheless,  we use our conventions for tropical polynomials, namely,  we will denote by $V(\Delta_S|_P(w)\!)$ its {\it tropical hypersurface}, and if $q\in V(\Delta_S|_P(w)\!)$, then $S(q)=\{M\in S\::\:\Delta_S|_P(q)=|M|_S+a_{M}+deg(M)q\}$.  See Definition \ref{def:set_of_exponents}. We will discuss the properties of these objects in Section \ref{sec:ordatleastk}.

Suppose that $S$ is not homogeneous and let $S_{0}\subset S$ be the subset consisting of monomials of minimal degree. Let $\emptyset\neq S_1=S\setminus S_0$, and write $\mathbb{T}^{\#S}\cong \mathbb{T}^{\#S_0}\times \mathbb{T}^{\#S_1}$. If $a\in \mathbb{T}^{\#S}$, then we write as $a=(a_0,a_1)$ with $a_i\in \mathbb{T}^{\#S_i}$. In particular, if $P\in\mathbb{Z}_S\subset \mathbb{T}^{\#S}$, we write $P=P_0+P_1$ with $P_i\in\mathbb{Z}_{S_i}\subset \mathbb{T}^{\#S_i}$.

\begin{proposition}
\label{prop:nothcase}
    Let $S$ be not homogeneous and let $P_0+P_1=P\in\mathbb{Z}_S$. Then     $\#\mu(P,[k-1]^c)=\infty$ if and only if $        P_0\in V(\Delta_{S_0}(x_M\::\:M\in S_0)\!).$
\end{proposition}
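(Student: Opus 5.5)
By Prop. \ref{prop:poly}, $\mu(P,[k-1]^c)$ is in bijection with the set of integers $r=q-k\geq 0$ such that $\Delta_S|_P(r)=\min_j\{|M_j|_k+a_{M_j}+deg(M_j)r\}$ vanishes tropically. So the plan is to study the function $r\mapsto\Delta_S|_P(r)$ on $\mathbb{Z}_{\geq0}$ by splitting $S=S_0\sqcup S_1$ according to degree. Let $d_1=deg(M)$ for $M\in S_0$, so every $M'\in S_1$ has $deg(M')\geq d_1+1$. Write
\begin{equation*}
\Delta_S|_P(r)=\min\{\Delta_{S_0}(P_0)+d_1r,\ \min_{M'\in S_1}(|M'|_k+a_{M'}+deg(M')r)\},
\end{equation*}
where $\Delta_{S_0}(P_0)=\min_{M\in S_0}\{|M|_k+a_M\}$ is the homogeneous discriminant \eqref{eq:disc_homogeneous} for $S_0$ evaluated at $P_0$.

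The first step is a dominance estimate. Since $S_1$ is finite and each of its terms grows in $r$ with slope at least $d_1+1$, whereas the $S_0$-terms all have slope $d_1$, the difference (any $S_1$-term) $-(\Delta_{S_0}(P_0)+d_1r)$ is at least $r+C$ for a constant $C$ depending only on $P$. Hence there is $r_0$ such that for all integers $r\geq r_0$ the minimum defining $\Delta_S|_P(r)$ is attained \emph{only} at monomials of $S_0$, with the set of minimizers $S(r)$ equal to the set of minimizers of $\min_{M\in S_0}\{|M|_k+a_M\}$, independent of $r$.

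Second step, the two implications. If $P_0\in V(\Delta_{S_0})$, that minimum over $S_0$ is attained at least twice, so for all $r\geq r_0$ the value $\Delta_S|_P(r)$ vanishes tropically, giving infinitely many $t^{k+r}\in\mu(P,[k-1]^c)$. Conversely, if $P_0\notin V(\Delta_{S_0})$, the minimum over $S_0$ is attained exactly once, so for $r\geq r_0$ the minimum of $\Delta_S|_P(r)$ is attained by a single monomial and there is no tropical vanishing. Thus only the finitely many $r\in[0,r_0)$ can contribute, and $\#\mu(P,[k-1]^c)<\infty$.

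The only real point to be careful about is the first step: making precise that the $S_0$-minimum is eventually strict against $S_1$. The gap in slopes is at least one, so $r_0$ can be taken explicitly, e.g. $r_0>\max_{M'\in S_1,M\in S_0}\{(|M|_k+a_M)-(|M'|_k+a_{M'})\}$. One must also note that the minimizers within $S_0$ do not depend on $r$, since all $S_0$-terms share the slope $d_1$, exactly as in the proof of Corollary \ref{prop:hcase}. The rest is bookkeeping.
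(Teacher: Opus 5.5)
Your proof is correct and follows the paper's route. Like the paper, you reduce via Prop.~\ref{prop:poly} to the bend locus of $\Delta_S|_P(w)$ on $\mathbb{Z}_{\geq0}$, observe that for $w\gg0$ only the minimal-degree monomials in $S_0$ can attain the minimum, and use that these share the slope $d_1$ (as in Cor.~\ref{prop:hcase}), so eventual tropical vanishing is equivalent to $P_0\in V(\Delta_{S_0})$. Your explicit threshold $r_0$, which comes from the slope gap $\deg(M')-d_1\geq 1$, makes precise what the paper only sketches informally (``$w\gg0$'', vanishing on a ray $[q,\infty)$).
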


\begin{proof}
    Under our previous conventions,  Prop. \ref{prop:poly} says that for  $P\in\mathbb{Z}_S$ we have  a bijection of sets 
    \begin{equation}
        \begin{aligned}
            \mu(P,[k-1]^c)&\xrightarrow[]{\sim} V(\Delta_S|_P(w)\!)\cap\mathbb{Z}_{\geq0}\\
            q&\mapsto q-k
        \end{aligned}
    \end{equation}
        so we need to analyze the monomials $|M_j|_k+a_{M_j}+deg(M_j)w$ from $ \Delta_S|_P(w)$ for $w\geq0$. These are Euclidean rays $w\mapsto C+dw$ for $C\in\mathbb{R}$ and $d\in deg(S)=\{d_1<\cdots<d_m\}$ a positive integer.

    For $w>\!\!>0$, the value $\Delta_S|_P(w)$ will be taken at a monomial $|M_j|_k+a_{M_j}+deg(M_j)w$ with $deg(M_j)\in S_0$, and the only way in which $\Delta_S|_P(w)$ could vanish tropically in a ray $[q,\infty)$ is if there exists another monomial $|M_b|_k+a_{M_b}+deg(M_b)w$ defining the same Euclidean ray, in particular $deg(M_b)\in S_0$. Since $S_0$ is homogeneous, by  Cor. \ref{prop:hcase} this happens if and only if $P_0\in V(\Delta_{S_0}(x_M\::\:M\in S_0)\!)$ as in \eqref{eq:disc_homogeneous},  which finishes the proof.
\end{proof}

\begin{remark}
\label{rem:coro}
    Note that $S$ is homogeneous of degree $d$, then $S_{min }= S=\{d\}$. In this case the specialization \eqref{eq:disc:specialization} takes the form 
\begin{equation*}
\Delta_{S,\emptyset}|_P(w)=min_j\{|M_j|_S+a_{M_j}+dw\}=min_j\{|M_j|_S+a_{M_j}\}+dw.
\end{equation*}
Thus, we can recover Cor. \ref{prop:hcase} from Prop. \ref{prop:nothcase}. Note that if $C:=min_j\{|M_j|_S+a_{M_j}\}$, then the {\it polynomial} form of $\Delta_{S,\emptyset}|_P(w)=C+dw=C\odot w^{\odot d}$ would be just a monomial (in particular, it's bend locus in $\mathbb{T}^*$ would be empty).
\end{remark}

If $S$ is not homogeneous, since $Supp(\Delta_{S_0,\emptyset})=S_0$, we can compute $V(\Delta_{S_0,\emptyset})\subset (\mathbb{T}^*)^{\#S_0}$, and consider $V(\Delta_{S_0,\emptyset})\times \mathbb{T}^{\#(S\setminus S_0)}\subset \mathbb{T}^s$. Note that $V(\Delta_{S_0,\emptyset})=\{\infty\}$ if and only if $\#S_{0}=1$.

In the coming sections we will be more concerned about the particular properties of $Sol_k(P)$ for $P\in \mathbb{Z}_S\setminus V(\Delta_{S_0}(x_M\::\:M\in S_0)\!)$. In the decomposition $\varphi=R\cup B$,  the set $Sol_k(P,[k-1])$ corresponds  case $R\neq\emptyset$ will be studied in Section \ref{sect:F(P)}. The case $R=\emptyset$ will be studied in Section \ref{sec:ordatleastk}, after the introduction of our definition of  (minimal) tropical Laurent (and Hahn) series solution of \textsc{otde}s.

\section{Elementary $k$-solutions $ord(\varphi)\in\{[ord(P)-1]\}$}
\label{sect:F(P)}
We denote by $Sol_k(P,[k-1])$ (respectively $\mu(P,[k-1])$) the set elementary $k$-solutions $\varphi$ of $P$ (respectively minimal) satisfying $ord_t(\varphi)\in[k-1]$. In terms of the shape $\varphi=R\cup B$, this is the case $\emptyset\neq R=\{0\leq r_1<\cdots<r_s\leq k-1\}$, which has two sub-cases :
     \begin{enumerate}
         \item $B=\emptyset$, then $\varphi=R\subset[k-1]$. We will discuss this case in Section \ref{sec:ordatmostkmo}, related to genericity conditions.
         \item $B\neq\emptyset$, then $\varphi=R\cup q$.  We will discuss this case in Section \ref{sec.generic_solutions}.
     \end{enumerate} 

The set $\mu(P,[k-1])$ was denoted by $F(P)$ in  \cite{GG26}, and we have $\mu(P,[k-1])\subset Sol_k(P,[k-1])$. Recall that if $S$ is a support set and the space $(\mathbb{T}^*)^{\#S}\times \mathbb{T}$ is endowed with coordinates $\{x_{M_1},\ldots,x_{M_s},z\}$, for $\emptyset\neq R\in\mathcal{P}([k-1])$ we can consider the (incidence) variety $V(\Delta_{S,R})\subset (\mathbb{T}^*)^{\#S}\times \mathbb{T}$, and we have
\begin{equation*}
V(\Delta_{S,R})\cap(\mathbb{Z}^s\times\overline{\mathbb{Z}}_{\geq k})=\{(P,ord(B)\!)\in\mathbb{Z}_S\times\overline{\mathbb{Z}}_{\geq k}\::\:R\cup B\in Sol_k(P)\}.
\end{equation*}

\subsection{Solutions $R\subset[k-1]$ and regularity}
\label{sec:ordatmostkmo}
In this section we will analyze elementary $k$-solutions $\varphi=R\cup B$   with $\emptyset\neq  R\subset[k-1]$ and $B=0$. For this we fix $S=\{M_1,\ldots,M_s\}$ of order $k$ and we use the family of polynomials 
\begin{equation*}
\Delta_{S,R}(x_{M_1},\ldots,x_{M_s},z)|_{z=\infty}:=min_j\{|M_j|_R(\infty)+x_{M_j}\},\quad \emptyset\neq  R\subset[k-1],
    \end{equation*} 
where $|M_j|_R(\infty)$ comes from \eqref{eq:def_M_R_infty}. Since $q=\infty$ is constant, each $\Delta_{S,R}$ defines an evaluation function $ev_{\Delta_{S,R}}:(\mathbb{T}^*)^{\#S}\xrightarrow[]{}\mathbb{T}$. Recall that a tropical prevariety is a finite intersection of tropical hypersurfaces, they are always closed sets in $\mathbb{T}^n$ (with respect to the Euclidean topology). For every $\emptyset\neq R\subset[k-1]$ we set $Y_R=V(\Delta_{S,R}(x_{M_1},\ldots,x_{M_s},z)|_{z=\infty})$  the bend locus of the  evaluation map $ev_{\Delta_{S,R}}:\mathbb{T}^{s}\xrightarrow[]{}\mathbb{T}$, and 
\begin{equation}
\label{eq:deg_sols}
    \bigcap_{\emptyset\neq R\subset[k-1]}Y_R:=Y(S)\subset(\mathbb{T}^*)^{\#S}.
\end{equation}

The main result of this section is the following 
\begin{proposition}
For $S$ as above, we have that $P\in \mathbb{Z}_S$ has no solution contained in $[k-1]$ if and only if $P\notin Y(S)$.
\end{proposition}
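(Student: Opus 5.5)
The plan is to reduce everything to Proposition \ref{prop:discs}, specialized to elementary $k$-series with $B=0$, that is, $q=ord_t(B)=\infty$. Fix $\emptyset\neq R\subset[k-1]$. The series $\varphi=R$ (viewed as $R\cup\emptyset$) lies in $Im(\pi_k)$ by Definition \ref{def:eksN}, case (2)(a). By Proposition \ref{prop:discs}, $R\in Sol(P)$ if and only if $(P,\infty)\in V(\Delta_{S,R})$. By \eqref{eq:def_M_R_infty}, evaluating at $z=\infty$ gives $|M|_R(\infty)=c(M,R)$ when $d(M,R)=0$ and $\infty$ otherwise. Hence $ev_{\Delta_{S,R}}(P,\infty)$ is exactly the evaluation of $\Delta_{S,R}|_{z=\infty}$ at $c(P)$.

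The next step is to rule out the degenerate vanishing $ev=\infty$, so that membership in $V$ is the same as membership in the bend locus $Y_R$. This uses Convention \ref{conv:shape_of_P}: solutions with $ord_t(P(\varphi))=\infty$ are excluded from $Sol(P)$. Concretely, $ev=\infty$ means $E_M(R)=0$ for all $M\in S$, which by Proposition \ref{prop:equaltozerosols} is the excluded case. Therefore, for each $R$,
\[
R\in Sol_k(P)\iff c(P)\in Y_R .
\]

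Finally, I would assemble these equivalences over all $\emptyset\neq R\subset[k-1]$. Taking the contrapositive, $P$ has no solution contained in $[k-1]$ if and only if $c(P)\notin Y_R$ for every nonempty $R\subset[k-1]$.

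The main obstacle is matching this with the set $Y(S)$ as it is written in \eqref{eq:deg_sols}. The per-$R$ equivalence naturally produces the union $\bigcup_R Y_R$, not the intersection. So I would check carefully whether $Y(S)$ must be read as this union, which is what the argument actually yields. Alternatively, the proposition could be read with the quantifier ``for every $R$'', in which case the intersection is correct and the equivalence reads: $P$ admits every $R\subset[k-1]$ as a solution if and only if $P\in Y(S)$. I would prove the per-$R$ equivalence rigorously and then state the assembled version with the correct set operation, since the per-$R$ step is where all the content lies.
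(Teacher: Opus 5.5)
Your per-$R$ step is exactly the paper's proof. The paper applies Proposition \ref{prop:discs} with $B=0$ to get $\mathbb{Z}_S\cap Y_R=\{P\in\mathbb{Z}_S : R\in Sol(P)\}$, and then simply says ``the result follows''. You also discard the case $ev_{\Delta_{S,R}}(P,\infty)=\infty$, so that $Y_R$ is genuinely the bend locus, matching Convention \ref{conv:shape_of_P}. The paper leaves that detail implicit, and you handle it correctly.

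Your suspicion about the final assembly is also correct, and you should commit to it rather than hedge. Quantifying the per-$R$ equivalence over $R$ gives
\[
\{P\in\mathbb{Z}_S : \text{some }\emptyset\neq R\subset[k-1]\text{ lies in }Sol(P)\}=\mathbb{Z}_S\cap\bigcup_{\emptyset\neq R\subset[k-1]}Y_R .
\]
With $Y(S)=\bigcap_R Y_R$ as in \eqref{eq:deg_sols}, the statement is false in general once $k\geq 2$.

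Concretely, take the linear support $S=\{e_0,e_1,e_2\}$, so $k=2$.
\begin{itemize}
\item For $R=\{0\}$, only $M=e_0$ has $d(M,R)=0$. So $\Delta_{S,\{0\}}|_{z=\infty}$ has the single finite monomial $0\odot x_{e_0}$, hence $Y_{\{0\}}=\emptyset$ and therefore $Y(S)=\emptyset$.
\item For $R=\{1\}$, we have $\Delta_{S,\{1\}}|_{z=\infty}=\min\{1+x_{e_0},x_{e_1}\}$.
\item For $P=y_0+t\,y_1+y_2$ we get $ord_t(P(t))=1\oplus 1\oplus\infty$. So $t\in Sol(P)$ is a solution contained in $[1]$, although $P\notin Y(S)$.
\end{itemize}
More generally, $Y_{\{0\}}=\emptyset$ whenever $S$ contains at most one pure power of $y_0$, so the intersection is typically empty.

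The statement the paper's argument (and yours) actually proves is the one with $\bigcup_R Y_R$ in place of $Y(S)$. Your alternative ``every $R$'' reading is also true, but it is not what ``no solution contained in $[k-1]$'' asserts. The same union is also what is really needed where the proof of Theorem \ref{thm:crit_fin} characterizes finiteness of $Sol_k(P,[k-1])$ through $Y(S)$.
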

\begin{proof}
Let us consider $\emptyset\neq R\subset[k-1]$.  It follows from Prop. \ref{prop:discs} that  $R\in Sol(P)$ if and only if $\Delta_{S,R}(x_{M_1},\ldots,x_{M_s},\infty)$ vanishes at $P$, and  since $Y_R=V(\Delta_{S,R})$, then $\mathbb{Z}_S\cap V(\Delta_{S,R})=\{P\in \mathbb{Z}_S\::\:R\in Sol(P)\}$ and the result follows.
\end{proof}

\begin{remark}
\label{prop:ud}
Another way to state the previous result is the following:  $P\in Y_R$ if and only if $\varphi=R\cup\{q\}\in Sol_k(P)$ for all $q>>0$. Indeed, if $P=(a_M)\in Y_R$, then there are two monomials of $\Delta_{S,R}(a_{M_1},\ldots,a_{M_s},z)|_{z=\infty}$ where the minimum is achieved (and it is different from $\infty$). There exists $q_0>>0$ such that when we evaluate the monomials $M$ satisfying $Supp(M)\not\subset [0,r_s]$ at $q_0$, then this evaluation is still bigger than the value $\Delta_{S,R}(a_{M_1},\ldots,a_{M_s},z)|_{z=\infty}$, and this condition is still satisfied for $q>q_0$.

Conversely, suppose that $\varphi=R\cup\{q\}\in Sol_k(P)$ for $q>>0$, but $P\notin Y_R$, then there exists $M_0\in S$ such that $\infty\neq ord_t(P(R)\!)=a_{M_0}+ord_t(E_{M_0}(R)\!)<a_M+ord_t(E_M(\varphi)\!)$ for all $M\neq M_0$, it follows that $Supp(M_0)\subset [0,r_s]$ by Lemma \ref{lemma:explicit_computations}. On the other hand, by hypothesis we have that there exists at least two different monomials $M_1,M_2\in S$ such that $\infty\neq ord_t(P(R\cup\{q\})\!)=a_{M_i}+ord_t(E_{M_i}(R)\!)\leq a_M+ord_t(E_M(\varphi)\!)$  for all $M\in S$, in particular $ord_t(P(R\cup\{q\})\!)\leq ord_t(P(R)\!)=a_{M_0}+ord_t(E_{M_0}(R)\!)$ for $q>>0$, which is not possible.
    
This means that there exist solutions $\emptyset \neq R\subset[k-1]$ for which there exists $k\leq q<\infty$ forming a new {\it artificial} solution $\varphi=R\cup\{q\}$, which of course can't be minimal. 
\end{remark}

Let $X(S)\subset\mathbb{T}^s$ be the tropical prevariety $X(S):=V(\Delta_{S_0,\emptyset},\Delta_{S,R},\emptyset\neq R\subset[k-1])=Y(S)\cap V(\Delta_{S_0}(x_M\::\:M\in S_0)\!)$, where $S_{0}\subset S$ is the subset consisting of monomials of minimal degree. We now extend the  concept of regular polynomial from \cite[Definition 8]{GG26}.
\begin{definition}
\label{def:ext_regular}
    We say that $P=(a_M)\in \mathbb{Z}_S$ is regular if $P\notin X(S)$.
\end{definition}

We now extend  \cite[Proposition 5]{GG26}.

\begin{corollary}
\label{coro:coro1}
We have that $P$ is regular if and only if $\mu(Sol(P)\!)=\mu(P,[k-1])\sqcup\mu(P,[k-1]^c)$ satisfies: 
\begin{enumerate}
        \item any $\varphi\in \mu(P,[k-1])$ is of the form $R\cup\{q\}$, with $\emptyset\neq R\subset [k-1]$ and $q\in\mathbb{Z}_{\geq k}$ varying in a finite set,
        \item $\mu(P,[k-1]^c)\cong V(\Delta_{S,\emptyset}|_P(w)\!)\cap\mathbb{Z}_{\geq0}$ is finite (empty, if $S$ is homogeneous).
    \end{enumerate}
    Here $\Delta_{S,\emptyset}|_P(w)=min_j\{|M_j|_S+a_{M_j}+deg(M_j)w\}$. 
\end{corollary}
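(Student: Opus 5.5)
The plan is to prove each direction by relating the two defining pieces of $X(S)=Y(S)\cap V(\Delta_{S_0}(x_M\::\:M\in S_0))$ to conditions (1) and (2) separately. The decomposition $\mu(Sol(P))=\mu(P,[k-1])\sqcup\mu(P,[k-1]^c)$ is just the dichotomy on whether $ord_t(\varphi)$ lies in $[k-1]$, so the content lies in the two conditions.

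For condition (2), I would use Prop. \ref{prop:poly}, which gives the bijection $\mu(P,[k-1]^c)\cong V(\Delta_{S,\emptyset}|_P(w))\cap\mathbb{Z}_{\geq0}$ via $q\mapsto q-k$. Prop. \ref{prop:nothcase} then says this set is infinite exactly when $P_0\in V(\Delta_{S_0})$. In the homogeneous case, Cor. \ref{prop:hcase} and Remark \ref{rem:coro} say the set is either empty or all of $\{t^{k+i}\}$, again according to $P\in V(\Delta_S)$. So (2) is equivalent to $P_0\notin V(\Delta_{S_0})$.

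For condition (1), I take $\varphi\in\mu(P,[k-1])\subset Sol_k(P,[k-1])$. By Definition \ref{def:eksN} it has the form $R$ or $R\cup\{q\}$ with $\emptyset\neq R\subset[k-1]$. The requirement that every minimal element has the form $R\cup\{q\}$ means that no $R\subset[k-1]$ is itself a solution. Indeed, if some $R$ is a solution, any $R'\subseteq R$ that is minimal among solutions is still of the form "subset of $[k-1]$", so pure-$R$ minimal solutions exist iff some $R\in Sol(P)$. By the Proposition preceding Remark \ref{prop:ud}, this happens iff $P\in Y(S)$ (reading that Proposition through the sets $Y_R$). For finiteness of the $q$'s: for fixed $R$ with $P\notin Y_R$, Remark \ref{prop:ud} shows $R\cup\{q\}\notin Sol_k(P)$ for $q\gg0$. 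I would make this precise by noting that $ev_{\Delta_{S,R}}(P,q)$ for large $q$ is attained uniquely by a monomial with $d(M,R)=0$, or else tends to infinity along rays of positive slope, where ties occur in at most finitely many $q$ unless two rays coincide.

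The main obstacle is exactly this last point. Two monomials with equal $d(M,R)>0$ and equal constants give a ray tie for all large $q$, so $R\cup\{q\}$ would be a solution for infinitely many $q$. I would handle it by observing that $R\cup\{q\}$ is minimal only if $R$ itself is not a solution. I would then show that an infinite tie among positive-slope rays for $R$ forces, via the leading-degree analysis of Prop. \ref{prop:nothcase}, a tie among the $S_0$ terms, which is the same condition $P_0\in V(\Delta_{S_0})$. Combining the two parts gives: $P$ is regular, i.e. $P\notin Y(S)\cap V(\Delta_{S_0})$, iff (1) and (2) hold. I would check carefully which of the two conditions each failure case violates, since the intersection, not the union, defines $X(S)$. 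I expect that bookkeeping to be the delicate step, and I would try first to show that the failure of (1) and the failure of (2) occur together under the hypotheses.
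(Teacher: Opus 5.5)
Your treatment of (2) is fine: Prop.~\ref{prop:poly} together with Prop.~\ref{prop:nothcase} (or Cor.~\ref{prop:hcase}) shows that (2) holds exactly when $P_0\notin V(\Delta_{S_0})$. The gap is in (1) and in the final combination. It cannot be closed, because the statement is false as written.

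First, ``some $R\subset[k-1]$ is a solution'' means $P\in\bigcup_R Y_R$, not $P\in Y(S)=\bigcap_R Y_R$. Moreover, the failures of (1) and (2) need not occur together. Take $k=2$ and $P=y_0+t\,y_1^2+y_2^2$. Then $ord_t(P(t))=\min\{1,1,\infty\}$, so $t\in\mu(P,[1])$ and (1) fails. But $S_0=\{e_0\}$ is a single monomial, so (2) holds and no $P\in\mathbb{Z}_S$ has $P_0\in V(\Delta_{S_0})$. Hence $X(S)=\emptyset$ and $P$ is regular. In fact $Y(S)=\emptyset$ too, since $Y_{\{0\}}=\emptyset$, although $P\in Y_{\{1\}}$.

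Second, your proposed handling of the positive-slope case fails. In $\Delta_{S,R}|_P(z)$ the slopes are $d(M,R)=\sum_{j>r_s}m_j$ and the constants are $c(M,R)+a_M$. These are unrelated to $\deg(M)$ and $|M|_k+a_M$, so an infinite tie for $R$ forces nothing on $S_0$. Take $k=1$ and $P=y_0y_1+y_1$:
\begin{itemize}
\item for each $q\geq1$, both monomials give $ord_t(E_M(1+t^q))=q-1$, a tie;
\item $P(1)=0$, which Convention~\ref{conv:shape_of_P} excludes;
\item $ord_t(P(t^q))=\min\{2q-1,q-1\}$ has no tie.
\end{itemize}
So every $1+t^q$ lies in $\mu(P,[0])$ and (1) fails. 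Yet $S_0=\{e_1\}$ is a singleton and no monomial of $S$ is supported in $[0,0]$, so $P\notin Y_{\{0\}}\cup V(\Delta_{S_0})$. This is exactly the case $ord_t(P(R))=\infty$ that the converse half of Remark~\ref{prop:ud}, which you invoke, silently excludes.

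The paper's own proof does not get around this. It defers (1) to Theorem~\ref{thm:crit_fin}, whose argument actually yields a condition of the form $P\notin V(\Delta_{S_0})\cup\bigcup_R Y_R$, and it uses Remark~\ref{prop:ud} in the same way. So your bookkeeping should not conclude that the two failures coincide; it shows the statement must be amended. $X(S)$ has to be replaced by a union. One must also account for ties among the minimal-slope rays of $\Delta_{S,R}|_P$ for those $R$ with $d(M,R)>0$ for every $M\in S$.
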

\begin{proof}
    We only lack the statement (1), which will be proved below in Theorem \ref{thm:crit_fin}. 
\end{proof}

We will enhance later Corollary \ref{coro:coro1}  into Corollary \ref{cor:2}, to consider also minimal tropical Laurent solutions.

Note that the support of $\Delta_{S,R}$ is $\{M\in S,\:Supp(M)\subset [0,r_s]\}$, so not all the polynomials are relevant in the computation of the tropical hypersurface $Y(S)$.

In the linear case $S=\{e_0,\ldots,e_k\}$, the set $\mathbb{Z}_S\setminus \mathbb{Z}_S\cap X(S)$ was  called the set of regular tropical differential polynomials (see \cite[Definition 8]{GG26}). For this case, regularity is a combination of  {\it tropical holonomy} (finite number of minimal solutions) and {\it genericity} of tropical minimal solutions, which should be of the form $R\cup\{q\}$, with $\emptyset\neq R\subset [k-1]$ and $q=q(R)\geq k$ {\it unique}. 

For this  case, elementary $k$-solutions of the form $R\cup B$ with both $R\neq\emptyset$ and $B\neq0$ are the only ones that remain to be studied. We do this in the following section.

\subsection{Minimal solutions $R\cup B$ with both $R\neq\emptyset$ and $B\neq0$}
\label{sec.generic_solutions}

In this section, we study the  subset  of $Sol_k(P,[k-1])$ consisting of elementary $k$-solutions $\varphi=R\cup B$ with $R\neq\emptyset$ and $B\neq 0$. For this we fix $S=\{M_1,\ldots,M_s\}$ of order $k$ and we use the family of polynomials 
\begin{equation}
\label{eq:disc_T}
\Delta_{S,R}(x_{M_1},\ldots,x_{M_s},z):=min_j\{|M_j|_R(z)+x_{M_j}\},\quad \emptyset\neq  R\subset[k-1],
    \end{equation} 
where $|M_j|_R(z)$ comes from Definition \ref{def:values_of_evaluations}. Again, we shall consider the evaluation function $ev_{\Delta_S}:(\mathbb{T}^*)^{\#S}\times \mathbb{T}\xrightarrow[]{}\mathbb{T}$, where  the space $(\mathbb{T}^*)^{\#S}\times \mathbb{T}$ endowed with coordinates $\{x_{M_1},\ldots,x_{M_s},z\}$.

We have the main result of this Section. Recall \eqref{eq:sing} that \begin{equation*}
Sing(S)=    \{P\in\mathbb{Z}_S\::\:\#Sol_k(P)=\infty\}.
\end{equation*}

\begin{theorem}\label{thm:crit_fin}
     If $P\in\mathbb{Z}_S$, then  $\#Sol_k(P)<\infty$ if and only if $P$ is regular.    
\end{theorem}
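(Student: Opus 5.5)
The plan is to split $Sol_k(P)$ according to the shapes of elementary $k$-series in Definition \ref{def:eksN} and count each piece separately. By Prop. \ref{prop:discs}, for a fixed $R\in\mathcal{P}([k-1])$, the solutions $R\cup B\in Sol_k(P)$ are in bijection with the values $q\in\overline{\mathbb{Z}}_{\geq k}$ such that $(P,q)\in V(\Delta_{S,R})$. Since $\mathcal{P}([k-1])$ is finite, $\#Sol_k(P)=\infty$ if and only if some fibre $V(\Delta_{S,R}|_P)\cap\overline{\mathbb{Z}}_{\geq k}$ is infinite. There are three kinds of fibre:
\begin{enumerate}
\item $R\neq\emptyset$ and $B=0$: each such $R$ contributes at most one solution, so together these give at most $2^k-1$ solutions;
\item $R=\emptyset$: this fibre is $\mu(P,[k-1]^c)$, and by Prop. \ref{prop:nothcase} (with Cor. \ref{prop:hcase} in the homogeneous case) it is infinite if and only if $P_0\in V(\Delta_{S_0})$;
\item $R\neq\emptyset$ and $B=\{q\}$ with $k\leq q<\infty$: this is the only part that still needs an argument.
\end{enumerate}

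For case (3) I would use the same "rays" argument as in Prop. \ref{prop:nothcase}. For fixed $P$ and $R$, the map $q\mapsto ev_{\Delta_{S,R}}(P,q)$ is the minimum of the finitely many affine functions $q\mapsto a_M+c(M,R)+d(M,R)q$, by \eqref{eq:def_M_R_(z)}. Two distinct affine functions agree at most once, and a minimum of finitely many affine functions has eventually a constant set of minimizers. Hence the set of integers $q\geq k$ at which the minimum is attained twice is either finite or contains a whole tail $[q_0,\infty)\cap\mathbb{Z}$. The tail case occurs exactly when two monomials with the smallest slope $d_{\min}(R)=\min_{M}d(M,R)$ define the same affine function and that function attains the minimum.

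Two sub-cases then arise. If $d_{\min}(R)=0$, the tail condition says exactly that the constant terms $|M|_R(\infty)+a_M$ tie at their minimum, that is, $P\in Y_R$; this is Remark \ref{prop:ud}, and I would cite it directly. If $d_{\min}(R)>0$, then every monomial has $Supp(M)\not\subset[0,r_s]$. I would check that this forces $r_s<ord(M)$ for all $M\in S$, and that the tail condition then reduces to a tie of the same type as in the $R=\emptyset$ analysis, namely a condition on the minimal-degree part, so that it is again absorbed into the case-(2) condition $P_0\in V(\Delta_{S_0})$. I would verify this reduction carefully by comparing $c(M,R)$ with $|M|_k$.

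Combining the three cases gives: $\#Sol_k(P)=\infty$ if and only if $P_0\in V(\Delta_{S_0})$ or $P\in Y_R$ for some $\emptyset\neq R\subset[k-1]$. I expect the main obstacle to be matching this condition with the definition of $X(S)$. The paper writes $X(S)$ as an intersection $Y(S)\cap V(\Delta_{S_0})$, whereas the argument above naturally produces a union of the loci $Y_R$ and $V(\Delta_{S_0})\times\mathbb{T}^{\#(S\setminus S_0)}$. So the last step is to reconcile the two: either show the extra loci are contained in the intersection for integral points, or read the prevariety in the sense that makes the equivalence hold. I would settle this first on the linear example $S=\{e_0,\dots,e_k\}$, comparing with \cite[Definition 8]{GG26}, before writing the general argument. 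Item (1) of Corollary \ref{coro:coro1} would then follow, since for regular $P$ every fibre in case (3) is finite.
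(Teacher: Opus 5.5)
Your decomposition by $R$, the use of Proposition~\ref{prop:discs}, and the rays argument in case (3) follow the paper's route, done more carefully. You are also right that the argument naturally produces a union. The step that fails is the sub-case $d_{\min}(R)>0$: there the tail condition is \emph{not} absorbed into $P_0\in V(\Delta_{S_0})$. When every $M\in S$ has $ord(M)>r_s$, the slope attached to $M$ is $d(M,R)=\sum_{j>r_s}m_j$. This is a partial degree that ignores $m_0,\dots,m_{r_s}$, so its minimizers need not form $S_0$. Moreover, the intercepts $a_M+c(M,R)$ do not differ from $a_M+|M|_k$ by a function of $\deg(M)$ alone.

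Here is a concrete counterexample. Let $k=1$, $S=\{(1,1),(3,1)\}$ and $P=y_0y_1\oplus y_0^3y_1$, with both coefficients $0$. For $\varphi=1+t^q$ with $q\geq 1$, both monomials give $ord_t(E_M(\varphi))=q-1$, so $1+t^q\in Sol_1(P)$ for every $q\geq 1$. But $S_0=\{(1,1)\}$ is a singleton, so $V(\Delta_{S_0})$ has no point in $\mathbb{Z}_S$. Also $|M|_{\{0\}}(\infty)=\infty$ for both monomials, so the bend locus $Y_{\{0\}}$ is empty. Hence $P$ is regular whether $X(S)$ is read as an intersection or as a union, yet $Sol_1(P)$ is infinite. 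If you instead let $Y_{\{0\}}$ include the points where the evaluation is $\infty$, it becomes all of $\mathbb{Z}_S$; then $y_0y_1\oplus 1\odot y_0^3y_1$, whose $Sol_1$ is empty, would be non-regular. The paper's own proof has the same hole: Remark~\ref{prop:ud} assumes $ord_t(P(R))\neq\infty$, which is exactly the case $d_{\min}(R)=0$.

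So the equivalence cannot be proved as stated. Of your two proposed reconciliations, only the union reading is viable. The intersection reading already fails in the linear case $P=y_0\oplus 1\odot y_1$ with $k=1$: there $Y(S)=Y_{\{0\}}=\emptyset$, but $t^q\in Sol_1(P)$ for all $q\geq 1$.

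What your rays argument actually establishes, if you stop before the false reduction, is the following. $\#Sol_k(P)=\infty$ if and only if there is some $R\in\mathcal{P}([k-1])$ such that, with $S_R=\{M\in S: d(M,R)=\min_{N\in S}d(N,R)\}$, the minimum $\min_{M\in S_R}\{a_M+c(M,R)\}$ is attained at least twice. The three cases read as follows:
\begin{itemize}
\item for $R=\emptyset$, this is $P_0\in V(\Delta_{S_0})$;
\item for $R\neq\emptyset$ with $d_{\min}(R)=0$, it is $P\in Y_R$;
\item for $d_{\min}(R)>0$, it is an additional tropical hypersurface condition.
\end{itemize}
That last condition must be added to the union defining the non-regular locus for the theorem to hold.
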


\begin{proof}
Let $S_0\subset S$ be the subset consisting of monomials of minimal degree. Recall that $P$ is regular if and only if $P\notin X(S)$, where $X(S)=Y(S)\cap V(\Delta_{S_0}(x_M\::\:M\in S_0)\!)$.

We have that $Sol_k(P)$ for $P\in\mathbb{Z}_S$ is finite if and only if both $Sol_k(P,[k-1]^c)$ and $Sol_k(P,[k-1])$ are finite, since $Sol_k(P)=Sol_k(P,[k-1])\sqcup Sol_k(P,[k-1]^c)$. 

The first condition $\#Sol_k(P,[k-1]^c)<\infty$ if and only if $P\notin V(\Delta_{S_0})$ for the  case of non-homogeneous  $S$ comes from Proposition \ref{prop:nothcase}, and it comes from Cor. \ref{prop:hcase}  for $S$ homogeneous.

For the second condition $\#Sol_k(P,[k-1])$, If we take the first projection $\pi_1:(\mathbb{T}^*)^{\#S}\times \mathbb{T}\xrightarrow[]{}(\mathbb{T}^*)^{\#S}$, and $P\in \mathbb{Z}_S$, then $\pi_1^{-1}(P)=\mathbb{T}$ is a tropical linear variety of dimension 1, then we have that $\{q\in\mathbb{Z}_{\geq k}\::\:R\cup\{q\}\in Sol_k(P)\}=V(\Delta_{S,R})\cap \mathbb{Z}_{\geq k}\subset\pi_1^{-1}(P)$, and we have from Rem. \ref{prop:ud} that $\#\{q\in\mathbb{Z}_{\geq k}\::\:R\cup\{q\}\in Sol_k(P)\}<\infty$ if and only if $P\notin Y_R$, so $Sol_k(P,[k-1])$ is finite  if and only if $P\notin Y(S)$. \end{proof}

Now assume that $\emptyset\neq R=\{0\leq r_1<\cdots<r_s\leq k-1\}$. If $P=(a_{M_j})\in (\mathbb{T}^*)^{\#S}$, then we have a specialization for $\Delta_{S,R}$ as \eqref{eq:disc:specialization} for the case $R=\emptyset$:

\begin{equation*}
    \Delta_{S,R}|_P(z)=\bigoplus_jc(M_j,R)\odot a_{M_j}\odot z^{d(M_j,R)},
\end{equation*}

and a similar identification for $P\in\mathbb{Z}_S$:
\begin{equation}
\{R\cup q\in Sol_k(P)\}\xrightarrow[]{\sim} V(\Delta_{S,R}|_P(z)\!)\cap\mathbb{Z}_{\geq k}
    \end{equation}

Let $q\in V(\Delta_{S,R}|_P(z)\!)\cap\mathbb{Z}_{\geq k}$. Let $\varphi=R\cup q$, then we claim that there is $M_j\in S(\varphi)$ such that $Supp(M_j)\not\subset [0,r_s]$, where $ S(\varphi)$ is the set from Def. \ref{def:set_of_exponents}.

If $Supp(M_i)\subset [0,r_s]$ for all $M_i\in S(\varphi)$, then $ord_t(P(R\cup\{q\})\!)=ord_t(P(R)\!)$ does not depend on $q$, which says that $R\in Sol(P)$, a contradiction. We get that  $q\geq k$ gets  determined by  
\begin{equation*}
ord_t(P(\varphi)\!)=a_{M_j}+\alpha(M_j,R)+d(M,R)q,
\end{equation*}
since $d(M,R)\in\mathbb{Z}_{>0}$ by \eqref{eq:def_M_R_infty}, which ends the proof.

Note that in general the value $q\geq k$ will be an element of $\mathbb{Q}$, unless $d(M,R)=1$ (e.g. the linear case), or if $d(M,R)\in\mathbb{Z}_{>1}$ it divides $ord_t(P(\varphi)\!)-a_{M_j}-\alpha(M_j,R)$. The case $q\in\mathbb{Q}\setminus\mathbb{Z}$ will make sense as a Puiseux (Hahn) tropical solution, which will be discussed in Prop. \ref{thm:H(P)_finite}. Again, in the linear case, there are no non-generic projections other than the faces touching the hyperplane at infinity.

In the case in which $S$ is not homogeneous, it is better to work with the condition $\#Sol_k(P)<\infty$, since this is better described by tropical algebraic conditions, and this implies also that $\#\mu(Sol(P)\!)<\infty$. We finish this section with a bound on the number of monomials of elementary $k$-solutions $\varphi\in Sol_k(P,[k-1])$, which may be helpful for computational aspects.

\begin{proposition}
    Any   solution   $\varphi\in Sol_k(P,[k-1])$ has at most $\min\{\max_{i\neq j}\{\#Supp(M_i)+\#Supp(M_j)\},ord(P)+1\}$ monomials.
\end{proposition}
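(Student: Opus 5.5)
The plan is to prove the two bounds inside the minimum separately: the bound $ord(P)+1$ is structural, and the other comes from locating which elements of $\varphi$ the tropical tie actually sees.

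\emph{The bound $ord(P)+1=k+1$.} By Definition \ref{def:eksN}, any $\varphi\in Sol_k(P,[k-1])$ has the shape $\varphi=R\cup B$ with $\emptyset\neq R\subseteq[k-1]$ and $B$ either empty or a single monomial $\{q\}$ with $q\geq k$. Hence $\#\varphi\leq\#[k-1]+1=k+1$. No further argument is needed here.

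\emph{The bound $\max_{i\neq j}\{\#Supp(M_i)+\#Supp(M_j)\}$.} First I would sharpen Lemma \ref{lemma_computations} and Lemma \ref{lemma:explicit_computations} into a ``witness'' statement. For $\varphi\in Im(\pi_k)$ and a derivation index $j\in[0,k]$, the computation in the proof of Lemma \ref{lemma_computations} gives
\[
ord_t\Big(\tfrac{d^j\varphi}{dt^j}\Big)=\min(\varphi\cap[j,\infty))-j .
\]
Define the witness set $w_M(\varphi):=\{\min(\varphi\cap[j,\infty))\::\:j\in Supp(M)\}$. Then $ord_t(E_M(\varphi))$ depends only on $w_M(\varphi)$, and $\#w_M(\varphi)\leq\#Supp(M)$.

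Next, since $\varphi$ is a solution, by Remark \ref{rem:notation_trop_van} there are $M_i\neq M_j$ in $S(\varphi)$ realizing the tie, with finite common value. Put $W:=w_{M_i}(\varphi)\cup w_{M_j}(\varphi)$, so that $\#W\leq\#Supp(M_i)+\#Supp(M_j)$. The goal is then to show $\varphi=W$. The bound follows because $\#W$ is at most the maximum over pairs.

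To get $\varphi\subseteq W$, I would go through the elements of $\varphi=R\cup\{q\}$ and check each one:
\begin{enumerate}
\item The top element $q$ is witnessed. The argument after Theorem \ref{thm:crit_fin} shows that some tying monomial has $Supp\not\subset[0,r_s]$, so it sees $q$; if $B=\emptyset$, the largest $r_s$ plays this role.
\item For an intermediate $r_l\in R$, I would look at indices $j\in(r_{l-1},r_l]$ and try to show that one of the two tying monomials must have such an index in its support.
\end{enumerate}

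The main obstacle is step (2). Nothing in tropical vanishing a priori forces the tying monomials to look at every $r_l$. If some $r_l$ is unwitnessed, then removing it leaves $ord_t(E_{M_i})$ and $ord_t(E_{M_j})$ unchanged, but it can only increase the values of the other monomials, since the orders of the derivatives can only go up. So $\varphi\setminus\{r_l\}$ is again an elementary $k$-solution contained in $\varphi$. My first attempt is to combine this with a choice of the tying pair adapted to $\varphi$: choose the pair of $S(\varphi)$ maximizing $\#W$, and show that any unwitnessed $r_l$ would still be unwitnessed for every pair in $S(\varphi)$, contradicting some monomial of $S(\varphi)$ reading $[r_{l-1}+1,r_l]$.

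If this contradiction cannot be forced for arbitrary $\varphi$, the argument still yields the bound for every element of $\mu(P,[k-1])$, via the removal step above. I expect that this is exactly the place where the statement as worded needs care.
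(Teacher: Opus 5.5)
\textbf{There is a genuine gap at your step (2), and it cannot be closed, because the statement is false for general elements of $Sol_k(P,[k-1])$.}

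Your bound $\#\varphi\le k+1$ is fine and matches the paper. Your witness set $W$ is the same device the paper uses: its $r(j)$ for $j\in Supp(M_1)\cup Supp(M_2)$. The step you flag, $\varphi\subseteq W$, is exactly the step the paper's proof skips. It concludes that $\varphi$ has at most $\#Supp(M_1)+\#Supp(M_2)$ elements without showing that every monomial of $\varphi$ is some $r(j)$.

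The paper's own Example \ref{ex_Grigoriev} is a counterexample. Take $P=t^5y_0+t^2y_2+y_3$ and $\varphi=t+t^2+t^5$. Then $\varphi''=1+t^3$ and $\varphi'''=t^2$, so $ord_t(P(\varphi))=\min\{6,2,2\}$ vanishes tropically. Also $\varphi=\pi_3(\varphi)$ and $ord_t(\varphi)=1\in[2]$, so $\varphi\in Sol_3(P,[2])$ has three monomials. But every $M\in S=\{e_0,e_2,e_3\}$ has $\#Supp(M)=1$, so the claimed bound is $2$.

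In this example $S(\varphi)=\{e_2,e_3\}$ is the only tying pair, and $W=\{2,5\}$. So your idea of choosing a pair maximizing $\#W$ has nothing to choose from, and the monomial $t^1$ is genuinely unwitnessed. Your step (1) has a related weakness. The argument after Theorem \ref{thm:crit_fin} assumes $R\notin Sol(P)$. That fails for the artificial solutions $R\cup\{q\}$ of Remark \ref{prop:ud}, so in $Sol_k(P,[k-1])$ even the top element $q$ need not be witnessed.

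Your fallback is the correct repair, and it gives the statement restricted to $\mu(P,[k-1])$. This is also what the remark on the linear case just after the statement actually recovers. Let $\varphi$ be minimal in $Sol(P)$ and let $x\in\varphi\setminus W$.
\begin{enumerate}
\item Deleting $x$ can only raise each $\min(\varphi\cap[j,\infty))$, so every term $a_M+ord_t(E_M(\cdot))$ weakly increases.
\item The tying pair keeps its finite common value, since its minima lie in $W$.
\item Hence $\varphi\setminus\{x\}$ is a strictly smaller solution, contradicting minimality.
\end{enumerate}
Therefore $\varphi=W$ and $\#\varphi\le\#Supp(M_i)+\#Supp(M_j)$. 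Apply this deletion uniformly, including to $q$, instead of relying on the argument after Theorem \ref{thm:crit_fin}.
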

\begin{proof}
We know from Prop. \ref{prop:ot_basis} that any $\varphi\in Sol_k(P,[k-1])$ has at most $k+1$ monomials.  Let $\varphi=R\cup B$ with $R=\{0\leq r_1<\cdots<r_s\leq k-1\}$ and $B\in\overline{\mathbb{Z}}_{\geq k}$ be a  solution, so there are $M_1,M_2\in S(\varphi)$ such that $ord_t(P(\varphi)\!)=ord_t(E_{M_i}(\varphi)\!)\neq\infty$ for $i=1,2$.
    
This means that for $i=1,2$, and $m_{i,j}\neq0$, there should exist a monomial $t^{r(j)}$ in $\varphi$ such that $r(j)\geq j$, so if we order $Supp(M_{1})=\{p_0<\cdots <p_b\}$ we get a sequence $r(p_0)\leq \cdots \leq r(p_b)$ with at  most  $\#Supp(M_{1})$ distinct elements. We do the same for $Supp(M_{2})=\{q_0<\cdots <q_c\}$ to get a sequence $r(q_0)\leq \cdots \leq r(q_c)$ with at  most  $\#Supp(M_{2})$ distinct elements. We have that $\varphi$ has at most $\#Supp(M_{1})+\#Supp(M_{2})$ elements (when all these monomials are different), and then we take the maximum over all the distinct pairs of monomials of $P$.
\end{proof}

 \begin{example}
    If $P$ is linear, this is, $S=\{e_0,\ldots,e_k\},$ then $\max_{i\neq j}\{\#Supp(M_i)+\#Supp(M_j)\}=2$, so we recover the original bound from \cite{GG26}.
\end{example}

 We have seen that there are two relevant sets of polynomial solutions for an \textsc{otde} $P$, there are two relevant sets of polynomial solutions, namely $\mu(Sol(P)\!)\subset Sol_k(P)$, together with decompositions $\mu(Sol(P)\!)=\mu(P,[k-1])\sqcup\mu(P,[k-1]^c)$ and $Sol_k(P)=Sol_k(P,[k-1])\sqcup Sol_k(P,[k-1]^c)$, and since $Sol_k(P,[k-1]^c)=\mu(P,[k-1]^c)$, we have 
\begin{equation}
\label{eq:difference_between_k_and_min}
    Sol_k(P)\setminus \mu(Sol(P)\!)=Sol_k(P,[k-1])\setminus\mu(P,[k-1]).
\end{equation}

This says that $\mu(Sol(P)\!)\subset Sol_k(P)$ only differ at the level of $Sol_k(P,[k-1])\setminus\mu(P,[k-1])$, which are finite sets if and only if $P\notin Y(S).$

\section{Tropical Laurent and Hahn solutions}
\label{Sec:App}
After the generalization of the theory of minimal solutions of \textsc{otde}s through the study of elementary $k$-solutions in  sections \ref{sec:Gen_prop_Min_Sol} and \ref{sect:F(P)}, in this one we  come to applications of this set of ideas to the theory of \textsc{oade}s.

We start with some motivation from the  theory of algebraic curve singularities. Given the germ $(C,0)$ of an algebraic curve singularity (defined over any field),  the semigroup of values $S(C,0)$   records the set of $t$-adic orders of the germs regular functions defined on $(C,0)$. See \cite{CamCas}.

With this concept in mind, for $\mathfrak{P}=\mathfrak{P}(y)$ an \textsc{oade}  we introduce   the set $
    S(\mathfrak{P},0):=ord_t(\{\Phi(t)\in\mathbb{C}[\![t^{\mathbb{R}}]\!]\::\:\mathfrak{P}(\Phi(t)\!)=0\})\subset \mathbb{R}$ of $t$-adic orders of (germs of)  formal Hahn power series solutions of $\mathfrak{P}$ at the point  $t_0=0\in\mathbb{C}$, as in \eqref{eq:OCDE_tadicord}. On the other hand, we  introduce the set $Sol_{\mathbb{B}[\![t^{\mathbb{R}}]\!],k}(P)$ of  tropical Hahn elementary $k$-solutions of a tropical polynomial $P$ of differential order $k$. The final goal of this Section is to show that if $P=trop(\mathfrak{P})$, then there is an inclusion 
\begin{equation}
    S(\mathfrak{P},0)\subset ord_t(Sol_{\mathbb{B}[\![t^{\mathbb{R}}]\!],k}(P)\!).
\end{equation}
Under certain conditions, the previous inclusion factors through the set of $t$-adic of the minimal solutions $S(\mathfrak{P},0)\subset ord_t(\mu(Sol_{\mathbb{B}[\![t^{\mathbb{R}}]\!]}(P)\!))\subset ord_t(Sol_{\mathbb{B}[\![t^{\mathbb{R}}]\!],k}(P)\!)$.

\subsection{From complex analysis to tropical differential algebra}
\label{Sec:fromCtoT}
We are looking for applications of these tropical methods to the theory of  ordinary algebraic differential equations (abbr. \textsc{oade}) $\mathfrak{P}$, so we will start this Section by discussing some aspects of the analytic theory, and we will slowly move to a more algebraic setting where both the set $S(\mathfrak{P},0)$ from \eqref{eq:OCDE_tadicord} can be defined, and the tropical methods can be applied. We will work over the Banach field $\mathbb{C}$, and we denote by $\mathcal{M}_{\mathbb{C}}$ the sheaf of meromorphic functions on  $\mathbb{C}$.

First, we consider  \textsc{oade}s $\mathfrak{P}$ which are algebraic, this is, they can be expressed as a differential polynomial  $\mathfrak{P}= \sum_{i=1}^s {\mathcal A}_{M_i} (z)E_{M_i}$  with   coefficients ${\mathcal A}_{M_i}$ which are meromorphic functions defined on some open region $U\subset\mathbb{C}$, this is, ${\mathcal A}_{M_i}\in \mathcal{M}_{\mathbb{C}}(U)$. Some points $t_0\in U$ may be singular in the sense that every coefficient function ${\mathcal A}_{M_i}(z)$ of $\mathfrak{P}$ has a pole at $t_0$.

In principle, the Painlevé property for the OCDE $\mathfrak{P}$ is concerned about the behavior  of the solutions  $\Phi(z)$ of $\mathfrak{P}$ with $\Phi\in \mathcal{M}_{\mathbb{C}}(U)$, with respect to their {\it singularities} (poles of $\Phi$).

If $\mathcal{M}_{\mathbb{C},0}$ denotes  the field of germs  of meromorphic functions at $t_0=0\in\mathbb{C}$, then the germ at $0$ map $\Phi\mapsto \Phi_0$ defines a map 
\begin{equation}
    \{\Phi\in \mathcal{M}_{\mathbb{C}}(U)\::\:\mathfrak{P}(\Phi)=0\}\xrightarrow[]{}\{\Psi\in \mathcal{M}_{\mathbb{C},0}\::\:\mathfrak{P}(\Psi)=0\},
\end{equation}

for every open  $U\subset\mathbb{C}$ satisfying $t_0=0\in U$. We denote  by $\mathcal{O}_{\mathbb{C},0}\subset \mathcal{M}_{\mathbb{C},0}$ the local ring  of  germs of holomorphic functions at $0\in\mathbb{C}$. We have inclusions $\mathcal{M}_{\mathbb{C},0}\subseteq \mathbb{C}(\!(t)\!)=\mathbb{C}[\![t^{\mathbb{Z}}]\!]$ (the field of formal Laurent power series) and $\mathcal{O}_{\mathbb{C},0}\subseteq\mathbb{C}[\![t]\!]= \mathbb{C}[\![t^{\mathbb{N}}]\!]$ (the ring of formal  power series). This in turn induces a diagram of inclusions 
\begin{equation}
    \xymatrix{\{\Phi\in \mathcal{M}_{\mathbb{C},0}\::\:\mathfrak{P}(\Phi)=0\}\ar[r]&Sol_{\mathbb{C}[\![t^{\mathbb{Z}}]\!]}(\mathfrak{P}):=\{\Phi\in \mathbb{C}[\![t^{\mathbb{Z}}]\!]\::\:\mathfrak{P}(\Phi)=0\}\\
    \{\Phi\in \mathcal{O}_{\mathbb{C},0}\::\:\mathfrak{P}(\Phi)=0\}\ar[r]\ar[u]&Sol(\mathfrak{P}):=\{\Phi\in \mathbb{C}[\![t^{\mathbb{N}}]\!]\::\:\mathfrak{P}(\Phi)=0\}\ar[u]\\}
\end{equation}

So the set $Sol_{\mathbb{C}[\![t^{\mathbb{Z}}]\!]}(\mathfrak{P})=\{\Phi\in \mathbb{C}[\![t^{\mathbb{Z}}]\!]\::\:\mathfrak{P}(\Phi)=0\}$ is an algebraic generalization of the set of germs of meromorphic functions at $0\in\mathbb{C}$ which are also solutions of $\mathfrak{P}$. In order to make the notation less convoluted, sometimes we will denote $LSol(\mathfrak{P})=Sol_{\mathbb{C}[\![t^{\mathbb{Z}}]\!]}(\mathfrak{P})$.

From the same  type of generalizations, we can consider instead algebraic equations $\mathfrak{P}= \sum_{i=1}^s {\mathcal A}_{M_i} E_{M_i}$ as in \eqref{eq:ODE} with Laurent series coefficients ${\mathcal A}_i=t^{a_i} \sum_{j\geq 0} {\mathcal A}_{ij} t^j\in\mathbb{C}[\![t^{\mathbb{Z}}]\!]$, $a_i\in\mathbb{Z}$ and $\ {\mathcal A}_{i0}\in \mathbb{C}^*$, this is, we can work with elements of the differential ring $\mathfrak{P}\in\mathbb{C}[\![t^{\mathbb{Z}}]\!]\{y\}$. 

Let $ord_t:\mathbb{C}[\![t^{\mathbb{Z}}]\!]\xrightarrow[]{}\overline{\mathbb{Z}}$ be the $t$-adic valuation (see diagram \ref{diag:relevant}). Note that this interpretation of our algebraic generalization of germs of solutions is valid only at $t_0=0\in\mathbb{C}$, thus  this is the only {\it point} that may appear as a singularity of $\mathfrak{P}$, and this happens when $ord_t({\mathcal A}_i)=a_i\in\mathbb{Z}_{<0}$ for all $i=1,\ldots,s$.

If we extend the standard terminology to formal power series solutions $\Phi\in LSol(\mathfrak{P})$, these  can be either
\begin{enumerate}
    \item holomorphic at zero, in which case they are either regular ($ord_t(\Phi)=0$) or they have a zero at $0\in\mathbb{C}$ of some order $ord_t(\Phi)>0$, or
    \item have a pole at $0\in\mathbb{C}$ of some order $-ord_t(\Phi)>0$.
\end{enumerate}

While going from analysis to algebra make us lose the notion of local convergence (around the origin) of the germs of meromorphic (holomorphic) solutions,  it makes possible the application of algebraic and tropical methods. 

The theory of tropical differential algebraic geometry studies from a combinatorial and algebraic perspectives, the support sets these algebraic generalizations of germs of  solutions of equations $\mathfrak{P}\in\mathbb{C}[\![t^{\mathbb{Z}}]\!]\{y\}$ at the point $t_0=0\in\mathbb{C}$, disregarding whether or not these formal solutions represent germs of honest functions in an open neighborhood of $t_0=0$.  Concretely, tropical methods appear when we consider the interaction between, on one hand, the image of the support map $Supp:LSol(\mathfrak{P})\xrightarrow[]{}\mathcal{P}(\mathbb{Z})$, and on the other hand, the set of Laurent tropical solutions of the tropicalization $P=trop(\mathfrak{P})$. 

From now on we will restrict our attention to different algebraic generalizations of germs  $\Phi$ of meromorphic functions at $0\in\mathbb{C}$ which are at the same time solutions of some $\mathfrak{P}\in\mathbb{C}[\![t^{\mathbb{Z}}]\!]\{y\}$.  In Section \ref{sec:trop_LS} we will start with tropical Laurent solutions, and later in Sect. \ref{sec:ths}, we will introduce  more general germs of functions at $0\in\mathbb{C}$ which are also solutions of $\mathfrak{P}$.

We summarize the relevant objects objects that we will consider in this Section and their interactions into a the following commutative diagram (see also diagram \eqref{eq:ord_comm_with_supp})

\begin{equation}
\label{diag:relevant}
\xymatrix{    
 && \mathcal{H}_{\mathbb{C},0}\ar[r]&\mathbb{C}[\![t^{\mathbb{R}}]\!]\ar[d]_{Supp}\ar[ddr]^{ord_t}&\\
   & \mathcal{M}_{\mathbb{C},0}\ar[r]\ar[ur]&\mathbb{C}[\![t^{\mathbb{Z}}]\!]\ar[d]_{Supp}\ar[ddr]^{ord_t}\ar[ur]&\mathbb{B}[\![t^{\mathbb{R}}]\!]\ar[dr]_{ord_t}&\\
  \mathcal{O}_{\mathbb{C},0}\ar[r]\ar[ur] & \mathbb{C}[\![t^{\mathbb{N}}]\!]\ar[d]_{Supp}\ar[ddr]^{ord_t}\ar[ur]&\mathbb{B}[\![t^{\mathbb{Z}}]\!]\ar[dr]_{ord_t}\ar[ur]&&\mathbb{T}\\
    &\mathbb{B}[\![t^{\mathbb{N}}]\!]\ar[ur]\ar[dr]_{ord_t}&&\overline{\mathbb{Z}}\ar[ur]\\
   & &\overline{\mathbb{N}}\ar[ur]&&\\}
\end{equation}

\subsection{Tropical Laurent   solutions}
\label{sec:trop_LS}
In this part we generalize Section \ref{sect:TDA} from $\Gamma=\mathbb{N}$ to $\Gamma=\mathbb{Z}$ to  give the definition of (tropical) Laurent  series solution of \eqref{eq:otde}, which is a special case of (tropical) Hahn series solution, to be introduced in Section \ref{sec:ths}, and which corresponds to the choice $\Gamma=\mathbb{R}$.  See \eqref{diag:relevant} for a  diagram depicting all the relevant interactions between these objects. 

Recall that $\mathbb{B}[\![t^{\mathbb{Z}}]\!]$ denotes the semiring consisting of  objects of the form $\varphi=\sum_{i\in A}1_{\mathbb{B}}t^i$ where $A\subset \mathbb{Z}$ is well-ordered. The formal derivation of series  $\tfrac{d}{dt}$ extends to an operator $\mathbb{B}[\![t^{\mathbb{Z}}]\!]\xrightarrow[]{}\mathbb{B}[\![t^{\mathbb{Z}}]\!]$, as well as the commutative diagram: 
\begin{equation*}
    \xymatrix{\mathbb{B}[\![t^{\mathbb{Z}}]\!]\ar[r]^{dev_P}\ar[dr]_{trop_P}&\mathbb{B}[\![t^{\mathbb{Z}}]\!]\ar[d]^{ord_t}\\
    &\overline{\mathbb{Z}}.}
\end{equation*}  

Once again, the isomorphism $Supp:\mathbb{B}[\![t^{\mathbb{Z}}]\!]\xrightarrow[]{}\mathcal{P}_{wo}(\mathbb{Z})$ from Example \ref{ex:BPS} can be promoted to an isomorphism of semirings equipped with a derivation $\tfrac{d}{dt}$.  If we continue with our  Convention \ref{conv:shape_of_P}, then Prop. \ref{prop:equaltozerosols} remains valid, and we have 
\begin{definition}
\label{def:Laurent_tropsols}
    For $P=P(y)=\sum_{M_i\in S}t^{a_{M_i}}E_{M_i}$ as in \eqref{eq:otde}, we define 
     \begin{equation}
        Sol_{\mathbb{B}[\![t^{\mathbb{Z}}]\!]}(P)=\{\varphi\in\mathbb{B}[\![t^{\mathbb{Z}}]\!]\::\:ord_t(P(\varphi)\!)\text{``} = \text{'' } 0_{\mathbb{T}}\}
     \end{equation}
     the set of tropical Laurent solutions. We will write $LSol(P)=Sol_{\mathbb{B}[\![t^{\mathbb{Z}}]\!]}(P)$ in order to make the notation less heavy.
\end{definition}

Now we repeat the procedure from Section \ref{sec:ekseries_and_sols} with $\Gamma=\mathbb{Z}$ in place of $\Gamma=\mathbb{N}$. If we write $\mathbb{Z}=[k-1]\sqcup[k-1]^c$, and if $\varphi\in \mathbb{B}[\![t^{\mathbb{Z}}]\!]$ satisfies $ord_t(\varphi)\in[k-1]$, then $\varphi_{[k-1]^c}=\varphi_{\mathbb{Z}_{\geq k}}$. 

If we convene that now we take our complements with respect to $\mathbb{Z}$ instead of with respect to $\mathbb{N}$, then the Definition \ref{def:eksN} of $\pi_k$ can be extended naturally to $\pi_k:\mathbb{B}[\![t^{\mathbb{Z}}]\!]\xrightarrow[]{}\mathbb{B}[\![t^{\mathbb{Z}}]\!]$, afterwards we extend the definition of elementary  $k$-series to tropical Laurent series as the image $Im(\pi_k)\subset \mathbb{B}[\![t^{\mathbb{Z}}]\!]$; now these are of the form $\varphi=R\cup B$ with  

\begin{enumerate}
         \item $ord_t(\varphi)\notin[k-1]$, then $R=\emptyset$ and  $B=\{q\}$ with $q\in\mathbb{Z}\setminus\{0,\ldots,k-1\}$,
         \item $ord_t(\varphi)\in[k-1]$, then $\emptyset\neq R\subset [k-1]$ and the sub-cases 
         \begin{enumerate}
             \item $B=\emptyset$,
             \item $B=\{q\}$ with $k\leq q<\infty$.
         \end{enumerate}
    \end{enumerate}

Endow $LSol(P)$  with the order induced by inclusion of subsets of $\mathcal{P}(\mathbb{Z})$, and denote by $\mu(LSol(P)\!)$ the minimal elements of the poset $LSol(P)$. If we consider the set $LSol_k(P)$ consisting of elementary Laurent $k$-solutions of $P$, since Lemma \ref{lemma_computations} remains valid,  then we have a map $\pi_k:LSol(P)\xrightarrow[]{}LSol_k(P)$ and $\mu(LSol(P)\!)\subset LSol_k(P)$.

Note that the sets $LSol_k(P,[k-1])=\{\varphi\in LSol_k(P)\!)\::\:ord_t(\varphi)\in[k-1]\}$ and $LSol_k(P,[k-1]^c)=\{\varphi\in LSol_k(P)\::\:ord_t(\varphi)\notin[k-1]\}$ make sense in this new context, and yield the decomposition
 
\begin{equation*}
        LSol_k(P)=LSol_k(P,[k-1])\sqcup LSol_k(P,[k-1]^c),
    \end{equation*}
    where $LSol_k(P,[k-1])\subset LSol_k(P)$ is exactly the same as it was for the case $Sol_k(P,[k-1])\subset Sol_k(P)$, so if $Y(S)\subset(\mathbb{T}^*)^s$ is defined as in \eqref{eq:deg_sols}, then $ LSol_k(P,[k-1])$ is finite if and only if $P\in\mathbb{Z}_S\setminus Y(S)$ by Thm. \ref{thm:crit_fin}.

    So there is only left to check the cardinality of the set $LSol_k(P,[k-1]^c)\subset LSol_k(P)$. For this, we consider a support set $S=\{M_1,\ldots,M_s\}$ of order $k$ as usual, and recall that $\Delta_{S,\emptyset}|_P(w)=min_j\{|M_j|_k+a_{M_j}+deg(M_j)w\}$ for $P=(a_M)\in\mathbb{Z}_S$. We have the following generalization of Prop. \ref{prop:poly}.

\begin{proposition}
\label{prop_neg_ord_sols}
    If $\varphi\in  LSol_k(P)\setminus Sol_k(P)\!)$, then $\varphi=t^q$ for some $q\in \mathbb{Z}_{<0}$. Moreover, we have  a bijection of sets 
    \begin{equation}
        \begin{aligned}
            LSol_k(P)\setminus Sol_k(P)&\xrightarrow[]{\sim} V(\Delta_S|_P(w)\!)\cap\mathbb{Z}_{<-k}\\
            q&\mapsto q-k
        \end{aligned}
    \end{equation}
\end{proposition}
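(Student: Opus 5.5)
The plan is to imitate the proof of Prop. \ref{prop:poly}, now working inside $\mathbb{B}[\![t^{\mathbb{Z}}]\!]$. First I would pin down the shape of $\varphi$. Take $\varphi\in LSol_k(P)\setminus Sol_k(P)$. Since $\varphi$ is an elementary Laurent $k$-series, it has one of the shapes listed after the extension of $\pi_k$ to $\mathbb{B}[\![t^{\mathbb{Z}}]\!]$. If $ord_t(\varphi)\geq 0$, then $\varphi$ has support in $\mathbb{N}$, so it lies in $Im(\pi_k)\subset\mathbb{B}[\![t]\!]$. Moreover, the tropical vanishing condition for $\varphi$ is the same whether $\varphi$ is regarded as a Laurent series or as a power series, so $\varphi\in Sol_k(P)$, a contradiction. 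Hence $ord_t(\varphi)<0$, which lies outside $[k-1]$. Case (1) of that list then forces $R=\emptyset$ and $\varphi=t^q$ with $q\in\mathbb{Z}_{<0}$.

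Next I would compute $ord_t(E_M(t^q))$ for $q<0$. The $j$-th formal derivative of $t^q$ is $q(q-1)\cdots(q-j+1)t^{q-j}$. Over $\mathbb{B}$, with $q<0$, none of the factors $q-i$ vanish, so the support of $\frac{d^j t^q}{dt^j}$ is exactly $\{q-j\}$. This is the key difference from the case $0\le q\le k-1$, where a factor vanishes. Therefore
\[
ord_t(E_M(t^q))=\sum_j m_j(q-j)=|M|_\emptyset(q)=|M|_k+deg(M)(q-k),
\]
so Lemma \ref{lemma:explicit_computations} and Prop. \ref{prop:discs} extend verbatim to this case. It follows that $t^q\in LSol_k(P)$ if and only if $\min_j\{a_{M_j}+|M_j|_k+deg(M_j)(q-k)\}$ vanishes tropically, that is, if and only if $q-k\in V(\Delta_S|_P(w))$.

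Finally, the assignment $t^q\mapsto q-k$ is injective. Its image is exactly the set of integers $w\in V(\Delta_S|_P(w))$ arising from some $q<0$, i.e. with $w=q-k<-k$. Conversely, every such integer $w$ gives $q=w+k<0$ with $t^q\in LSol_k(P)\setminus Sol_k(P)$, which yields the stated bijection onto $V(\Delta_S|_P(w))\cap\mathbb{Z}_{<-k}$.

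The main obstacle I expect is the first step: checking carefully that the Laurent complement conventions place every negative-order elementary series in case (1), and that no negative-order series can belong to $Sol_k(P)$. This amounts to a bookkeeping argument about viewing $\mathbb{B}[\![t]\!]\subset\mathbb{B}[\![t^{\mathbb{Z}}]\!]$ with compatible derivations and compatible $\pi_k$. The computation of derivatives for negative exponents is routine, but it must be stated explicitly, since it is exactly where the nonvanishing of $q(q-1)\cdots$ is used.
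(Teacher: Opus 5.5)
Your proposal is correct and follows essentially the same route as the paper. First, negative-order elementary Laurent $k$-series are forced to be monomials $t^q$ with $q<0$. Second, the derivative computation $ord_t(\tfrac{d^j t^q}{dt^j})=q-j$ from point (1) of Lemma \ref{lemma_computations} still holds for $q<0$. Third, the criterion of Prop. \ref{prop:poly} then gives $t^q\in LSol_k(P)$ if and only if $q-k\in V(\Delta_S|_P(w))$. You are more explicit than the paper on two points it leaves implicit: why $ord_t(\varphi)\geq 0$ would force $\varphi\in Sol_k(P)$, and why the falling-factorial coefficients do not vanish for negative exponents.
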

\begin{proof}
If $\varphi\in  LSol(P)\setminus Sol(P)$, then $q=ord_t(\varphi)\in\mathbb{Z}_{<0}$, and since $ord_t(\varphi)\notin[k-1]$, we have that $\varphi\in  LSol_k(P)\setminus Sol_k(P)$ if and only if $\varphi=t^q$ with $q<0$.

Now, the computation of point (1) from Lemma \ref{lemma_computations} remains valid, so we can apply the Prop. \ref{prop:poly}, with the difference that $t^q$ is a minimal solution  according to the new Definition \ref{def:Laurent_tropsols}, if and only if $q-k\in V(\Delta_{S,\emptyset}|_P(w)\!)$, which finishes the proof. 
\end{proof}

\begin{remark}
    In \cite{DG}, D. Grigoriev introduced Laurent solutions for \textsc{otde}s, which for our case ($n=1$ differential variables) are solutions either of the form $\varphi\in\mathbb{B}[\![t^{\mathbb{N}}]\!]$ or $\varphi=t^q$ with $q\in\mathbb{Z}_{<0}$, which as we saw are just the elements of the set $LSol_k(P)\setminus Sol_k(P)\!)$.
\end{remark}

We denote by $S_{max}\subset S$ the subset consisting of monomials of maximal degree of $S$. We have the following analogue of Prop. \ref{prop:nothcase}.

\begin{proposition}
\label{prop:nothcase_laurent}
    If $P\in\mathbb{Z}_S$, we write $P_{max}=\sum_{M\in S_{max}}t^{a_M}E_M$. Then     $\#(LSol_k(P)\setminus Sol_k(P)\!)=\infty$ if and only if 
    \begin{equation}
        \label{eq:condition_poles}
        P_{max}\in V(\Delta_{S_{max}}(x_M\::\:M\in S_{max},w)|_{w=0}).
    \end{equation}
\end{proposition}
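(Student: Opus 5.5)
The plan mirrors the proof of Prop. \ref{prop:nothcase}, with the direction of the asymptotics reversed: there we let $w\to+\infty$, here we let $w\to-\infty$. By Prop. \ref{prop_neg_ord_sols}, every element of $LSol_k(P)\setminus Sol_k(P)$ is a monomial $t^q$ with $q\in\mathbb{Z}_{<0}$. Membership of $t^q$ is decided by whether $\Delta_S|_P(w)=\min_j\{|M_j|_k+a_{M_j}+deg(M_j)w\}$ vanishes tropically at $w=q-k$, that is, whether the minimum is attained at least twice. Hence $\#(LSol_k(P)\setminus Sol_k(P))=\infty$ if and only if $V(\Delta_S|_P(w))$ contains infinitely many integers $w<-k$.

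The first step is to view each term as an affine function $w\mapsto C_M+deg(M)w$ with $C_M=|M|_k+a_M$. For $w\ll0$ the minimum is attained only among terms of maximal slope, namely those with $M\in S_{max}$. More precisely, there is $w_0$ such that for all $w<w_0$ every term with $deg(M)<d_{max}$ is strictly larger than $\min_{M\in S_{max}}\{C_M\}+d_{max}w$. One can take $w_0$ below all the finitely many intersection points of these lines with the lines coming from $S_{max}$.

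On $(-\infty,w_0)$ we therefore have $\Delta_S|_P(w)=\min_{M\in S_{max}}\{C_M\}+d_{max}w$, and the set of terms attaining the minimum equals the set of $M\in S_{max}$ with $C_M$ minimal. This set does not depend on $w$. Consequently the minimum is attained twice either for every $w<w_0$ or for none. The first alternative holds exactly when $\min_{M\in S_{max}}\{|M|_k+a_M\}$ is attained at least twice, which is precisely $P_{max}\in V(\Delta_{S_{max}}(x_M\::\:M\in S_{max},w)|_{w=0})$. In that case every integer $w<\min\{w_0,-k\}$ gives a solution $t^{w+k}$, so there are infinitely many. Otherwise all solutions satisfy $w\in[w_0,-k)$, a bounded interval, so there are only finitely many. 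This mirrors how Cor. \ref{prop:hcase} was used for $S_0$.

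I expect no serious obstacle beyond carefully justifying the threshold $w_0$. Each non-maximal-degree line $C_M+dw$ with $d<d_{max}$ exceeds every maximal-degree line once $w<(C_{M'}-C_M)/(d-d_{max})$ for each $M'\in S_{max}$. This is a finite collection of conditions, so a common $w_0$ exists. The homogeneous case $S=S_{max}$ is covered trivially, since then $w_0=+\infty$.
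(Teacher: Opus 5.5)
Your proof is correct and follows the paper's argument: reduce via Prop.~\ref{prop_neg_ord_sols} to integer points $w<-k$ of the bend locus of $\Delta_S|_P(w)$, then observe that for $w\ll 0$ only the maximal-degree terms compete, so the minimum is attained twice on a whole ray exactly when $\min_{M\in S_{max}}\{|M|_k+a_M\}$ is attained twice. Your explicit threshold $w_0$ and the bounded-interval argument for the finite direction make rigorous what the paper leaves implicit in its ``for $w\ll 0$'' reasoning.
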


\begin{proof}
By Proposition \ref{prop_neg_ord_sols}, it is enough to study the behavior of  the monomials  from $\Delta_S|_P(w)$, which are  Euclidean rays $w\mapsto C+dw$ for $C$ a constant and $d\in deg(S)=\{d_1<\cdots<d_m\}$ a positive integer.

For $w<\!\!<0$, the value $\Delta_S|_P(w)$ will be taken at a monomial $|M_j|_k+a_{M_j}+deg(M_j)w$ with $deg(M_j)\in S_{max}$, and the only way in which $\Delta_S|_P(w)$ could vanish tropically in a ray $(-\infty,q]$ is if there exists another monomial $|M_b|_k+a_{M_b}+deg(M_b)w$ defining the same Euclidean ray, in particular $deg(M_b)\in S_{max}$. This happens if and only if condition \eqref{eq:condition_poles} is satisfied.
\end{proof}

We now present the main result of this Section, which is  a description of the locus $LSing(S)=\{P\in\mathbb{Z}_S\::\:\#LSol_k(P)=\infty\}$ from \eqref{eq:sing_laurent}, which can be seen as an extension of Theorem \ref{thm:crit_fin}. 

\begin{theorem}
\label{thm:crit_fin_mero}
Let $S=\{M_1,\ldots,M_s\}$ with $Y(S)\subset(\mathbb{T}^*)^s$  as in \eqref{eq:deg_sols}, and $S_{min}$, $S_{max}$ the set of minimal and maximal degree monomials, respectively. Then $\#LSol_k(P)<\infty$ if and only if 
    \begin{equation*}
        P\notin V(\Delta_{S_{min},\emptyset}|_{w=0},\Delta_{S_{max},\emptyset}|_{w=0})\cap Y(S).
    \end{equation*}
\end{theorem}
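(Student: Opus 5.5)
The plan is to follow the proof of Theorem \ref{thm:crit_fin} verbatim, now with three pieces instead of two. Since $\mathbb{Z}=\mathbb{Z}_{<0}\sqcup[k-1]\sqcup\mathbb{Z}_{\geq k}$, I would first write
\begin{equation*}
LSol_k(P)=\bigl(LSol_k(P)\setminus Sol_k(P)\bigr)\sqcup Sol_k(P,[k-1]^c)\sqcup LSol_k(P,[k-1]),
\end{equation*}
using that $LSol_k(P,[k-1])=Sol_k(P,[k-1])$, as observed right after the decomposition of $LSol_k(P)$. Then $\#LSol_k(P)<\infty$ is equivalent to the finiteness of each of the three pieces. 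Each piece is then matched with exactly one of the three defining conditions of the prevariety $V(\Delta_{S_{min},\emptyset}|_{w=0},\Delta_{S_{max},\emptyset}|_{w=0})\cap Y(S)$.

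\textbf{Matching the pieces.}
\begin{enumerate}
\item Pole part: by Prop. \ref{prop_neg_ord_sols} and Prop. \ref{prop:nothcase_laurent}, $LSol_k(P)\setminus Sol_k(P)$ is infinite exactly when $P_{max}\in V(\Delta_{S_{max},\emptyset}|_{w=0})$. This comes from the behaviour of the rays $w\mapsto C+dw$ of $\Delta_S|_P(w)$ as $w\to-\infty$, where the maximal degree $d_m$ dominates.
\item Positive-order part: by Prop. \ref{prop:nothcase} (and Cor. \ref{prop:hcase} in the homogeneous case), $Sol_k(P,[k-1]^c)$ is infinite exactly when $P_{min}\in V(\Delta_{S_{min},\emptyset}|_{w=0})$. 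This is the same ray analysis as $w\to+\infty$, where the minimal degree $d_1$ dominates.
\item Middle part: by the second half of the proof of Theorem \ref{thm:crit_fin}, which rests on Rem. \ref{prop:ud}, $Sol_k(P,[k-1])$ is infinite exactly when $P\in Y(S)$. The reason is that each fibre $V(\Delta_{S,R}|_P(z))\cap\mathbb{Z}_{\geq k}$ is infinite iff $P\in Y_R$.
\end{enumerate}

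\textbf{Assembling.} Combining the three equivalences, I would conclude with the same bookkeeping as in Theorem \ref{thm:crit_fin}, where regularity was packaged as $P\notin X(S)=Y(S)\cap V(\Delta_{S_0})$. Here $X(S)$ is replaced by the Laurent analogue $X_L(S)=V(\Delta_{S_{min},\emptyset}|_{w=0},\Delta_{S_{max},\emptyset}|_{w=0})\cap Y(S)$, which adds the $S_{max}$ hypersurface. If $S$ is homogeneous, I would note that $S_{min}=S_{max}=S$, so the two new conditions collapse to $V(\Delta_S)$ and the statement specializes to Theorem \ref{thm:crit_fin} plus Cor. \ref{prop:hcase}.

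\textbf{Expected obstacle.} The delicate step is this final assembly. Each piece contributes its own locus, and one has to reconcile how these loci are combined into the single prevariety in the statement, exactly as was done for $X(S)$ in Theorem \ref{thm:crit_fin}. I would handle it by treating the three conditions in parallel with that proof and the prevariety notation $V(f_1,\ldots,f_m)$ of the preliminaries. I would check on the homogeneous and linear examples (Example \ref{ex:A_k}) that the description agrees with \cite{GG26}.

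A secondary technical point is the endpoint of the ray argument in Prop. \ref{prop:nothcase_laurent}. The candidate integer points must lie in $\mathbb{Z}_{<-k}$ after the shift $w=q-k$. Since only finitely many bends of $\Delta_S|_P$ occur, a tie on an unbounded ray is the only source of infinitely many integer points, so this endpoint shift does not affect finiteness.
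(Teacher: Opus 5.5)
The gap is exactly the step you flag as the ``expected obstacle'', and it cannot be closed. Suppose your three equivalences hold. Then $LSol_k(P)$ is infinite as soon as \emph{any one} of the three pieces is infinite, so what they prove is
\[
\#LSol_k(P)<\infty\iff P\notin V(\Delta_{S_{min},\emptyset}|_{w=0})\cup V(\Delta_{S_{max},\emptyset}|_{w=0})\cup\bigcup_{\emptyset\neq R\subset[k-1]}Y_R .
\]
The statement, however, only excludes the \emph{intersection} $V(\Delta_{S_{min},\emptyset}|_{w=0},\Delta_{S_{max},\emptyset}|_{w=0})\cap Y(S)$, where $Y(S)=\bigcap_R Y_R$. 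The same slip already sits inside your item 3: ``each fibre is infinite iff $P\in Y_R$'' yields $\bigcup_R Y_R$, not $Y(S)$. Copying the bookkeeping $X(S)=Y(S)\cap V(\Delta_{S_0})$ of Theorem~\ref{thm:crit_fin} only imports the error. The paper's own proof is the same three-way split and simply asserts that the result follows, so it does not supply the missing step either.

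In fact the ``if'' direction is false, and the linear sanity check you propose exposes it. Take $k=1$ and $P=y_0+t\,y_1$. The only $z$-constant term of $\Delta_{S,\{0\}}$ is $x_{e_0}$, so $Y(S)=Y_{\{0\}}=\emptyset$ and $P$ lies outside the stated locus. Yet $\ord_t P(t^q)=\min\{q,\,1+(q-1)\}$ is attained twice for every $q\in\mathbb{Z}\setminus\{0\}$, so $LSol_1(P)$ is infinite. The same example refutes Theorem~\ref{thm:crit_fin} as stated. Your argument does prove the ``only if'' direction, since finiteness already forces $P\notin V(\Delta_{S_{max},\emptyset}|_{w=0})$. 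The true criterion is that $P$ avoid the union of the three loci.

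A second, smaller flaw is the per-$R$ equivalence you take from Remark~\ref{prop:ud}. It presupposes that $\Delta_{S,R}$ has a term with $d(M,R)=0$, i.e.\ some $M\in S$ with $Supp(M)\subset[0,r_s]$. Without such a term, $\Delta_{S,R}|_{z=\infty}\equiv\infty$ and its bend locus $Y_R$ is empty, yet the fibre can still be infinite. For $P=y_1+t\,y_2$ and $R=\{0\}$, $\ord_t P(1+t^q)=\min\{q-1,\,1+(q-2)\}$ ties for every $q\geq 2$.

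The correct per-$R$ test is the same ray analysis you already use in items 1 and 2. The set $V(\Delta_{S,R}|_P(z))\cap\mathbb{Z}_{\geq k}$ is infinite iff, among the terms $c(M,R)+a_M+d(M,R)z$ of smallest slope $d(M,R)$, the minimal value of $c(M,R)+a_M$ is attained at least twice. This reduces to $P\in Y_R$ only when that smallest slope is $0$. With this amendment, the middle locus in the union above should be the union over $R$ of these conditions rather than $\bigcup_R Y_R$.
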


\begin{proof}
    We have that $LSol_k(P)$ for $P\in\mathbb{Z}_S$ is finite if and only if both $LSol_k(P,[k-1]^c)$ and $LSol_k(P,[k-1])$ are finite, since $LSol_k(P)=LSol_k(P,[k-1])\sqcup LSol_k(P,[k-1]^c)$.  Now, $LSol_k(P,[k-1])$ is finite if and only if $P\notin Y(S)$, and $LSol_k(P,[k-1]^c)$ is  the disjoint union of $\{ord_t(\varphi)\geq k\}$ and $\{ord_t(\varphi)<0\}$, and the result follows now from Theorem \ref{thm:crit_fin} and Prop. \ref{prop:nothcase_laurent}.
\end{proof}

Together, Props. \ref{prop:poly} and \ref{prop_neg_ord_sols} give us a new bijection
\begin{equation}
        \begin{aligned}
            LSol_k(P,[k-1]^c)&\xrightarrow[]{\sim} V(\Delta_S|_P(w)\!)\cap(\mathbb{Z}\setminus\{-1,-2,\ldots,-k\})\\
            q&\mapsto q-k
        \end{aligned}
    \end{equation}

\begin{remark}
\label{rem:coro_2}
    Note that if $S$ is homogeneous of degree $d$, then $LSing(S)=Sing(S)$, since $S_{min }=S_{max} =\{d\}$. In particular, Cor. \ref{prop:hcase} generalizes to 

\begin{equation*}
    \mu(P,[k-1]^c)=\begin{cases}
        \emptyset, &\text{ if }P\notin V(\Delta_S(x_{M_1},\ldots,x_{M_s},z)|_{z=0}),\\
        \mathbb{Z}\setminus\{0,\ldots,k-1\},&\text{ otherwise},\\
    \end{cases}
\end{equation*}
and since $LSol_k(P,[k-1])$ remains the same, no new tropical Laurent phenomena appear for the homogeneous case at the level of elementary $k$-solutions.
\end{remark}

If $S$ is not homogeneous, since $Supp(\Delta_{S_{min},\emptyset})=S_{min}$ and $Supp(\Delta_{S_{max},\emptyset})=S_{max}$,  we can compute $V(\Delta_{S_{min},\emptyset})\subset (\mathbb{T}^*)^{\#S_{min}}$ (This only works for $\#S_{0}>1$, otherwise $V(\Delta_{S_{min},\emptyset})=\emptyset$) and similarly $V(\Delta_{S_{max},\emptyset})\subset (\mathbb{T}^*)^{\#S_{max}}$ and consider $V(\Delta_{S_{min},\emptyset})\times \mathbb{T}^{\#(S\setminus S_{min})}\subset \mathbb{T}^s$ and $V(\Delta_{S_{max},\emptyset})\times \mathbb{T}^{\#(S\setminus S_{max})}\subset \mathbb{T}^s$. Then 
    \begin{equation*}
        LSing(S)=\begin{cases}
          \emptyset,&\text{ if }\#S_{min}=\#S_{max}=1,\\
        V(\Delta_{S_{min},\emptyset})\times \mathbb{T}^{\#(S\setminus S_{min})},&\text{ if }\#S_{min}>1,\#S_{max}=1\\
        V(\Delta_{S_{max},\emptyset})\times \mathbb{T}^{\#(S\setminus S_{max})},&\text{ if }\#S_{min}=1,\#S_{max}>1\\
        V(\Delta_{S_{min},\emptyset})\times \mathbb{T}^{\#(S\setminus S_{min})}\cap V(\Delta_{S_{max},\emptyset})\times \mathbb{T}^{\#(S\setminus S_{max})},&\text{ if }\#S_{min},\#S_{max}>1\\
        \end{cases}
    \end{equation*}

Recall that  we used the tropical prevariety $Y(S)\cap V(\Delta_{S_{min}}(x_M\::\:M\in S_{min})\!)=X(S)\subset\mathbb{T}^s$ to extend the concept of regularity from Def. \ref{def:ext_regular}, which itself extends \cite[Definition 8]{GG26}. Now let $X_L(S)\subset\mathbb{T}^s$ be the tropical prevariety $X_L(S)=X(S)\cap V(\Delta_{S_{max},\emptyset}|_{w=0})$.

\begin{definition}
    We say that $P=(a_M)\in \mathbb{Z}_S$ is Laurent regular if $P\notin X_L(S)$.
\end{definition}

We finish this part with the following extension of Corollary \ref{coro:coro1}, which considers also minimal tropical Laurent solutions, and extends the concept of regularity to Laurent regularity.  

\begin{corollary}
\label{cor:2}
We have that $P$ is Laurent regular if and only if $LSol_k(P)=LSol_k(P,[k-1])\sqcup LSol_k(P,[k-1]^c)$ satisfies: 
\begin{enumerate}
        \item any $\varphi\in LSol_k(P,[k-1])$ is of the form $R\cup\{q\}$, with $\emptyset\neq R\subset [k-1]$ and $q=q(R)\geq k$ varying in a finite set,
        \item $LSol_k(P,[k-1]^c)\cong V(\Delta_S|_P(w)\!)\cap[\mathbb{Z}\setminus\{-1,-2,\ldots,-k\}]$ is a finite set, which is empty if $S$ is homogeneous. Here $\Delta_{S,\emptyset}|_P(w)=min_j\{|M_j|_S+a_{M_j}+deg(M_j)w\}$. 
    \end{enumerate}
\end{corollary}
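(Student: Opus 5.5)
The plan is to argue as in the proof of Corollary \ref{coro:coro1}, splitting $LSol_k(P)=LSol_k(P,[k-1])\sqcup LSol_k(P,[k-1]^c)$ and treating each piece with results already established. Laurent regularity fails exactly when $P\in X_L(S)=Y(S)\cap V(\Delta_{S_{min}})\cap V(\Delta_{S_{max},\emptyset}|_{w=0})$. By Theorem \ref{thm:crit_fin_mero}, $P\notin X_L(S)$ is equivalent to $\#LSol_k(P)<\infty$. So it suffices to show that finiteness of $LSol_k(P)$ is equivalent to the conjunction of (1) and (2).

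For item (1), I would use the fact, noted in the text, that $LSol_k(P,[k-1])$ coincides with $Sol_k(P,[k-1])$. Its elements are $R$ with $\emptyset\neq R\subset[k-1]$ (the case $B=\emptyset$) or $R\cup\{q\}$ with $q\geq k$. For each fixed $R$ there are only finitely many $R$'s, and by Remark \ref{prop:ud} together with the identification $\{R\cup q\in Sol_k(P)\}\cong V(\Delta_{S,R}|_P(z))\cap\mathbb{Z}_{\geq k}$, the admissible $q$ form a finite set if and only if $P\notin Y_R$. Hence this piece is finite if and only if $P\notin Y(S)$, as in the proof of Theorem \ref{thm:crit_fin}. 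In that case the parameters $q$ attached to each $R$ range over a finite set, which is the content of (1).

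For item (2), I would combine Prop. \ref{prop:poly} and Prop. \ref{prop_neg_ord_sols} to obtain the bijection $LSol_k(P,[k-1]^c)\cong V(\Delta_S|_P(w))\cap(\mathbb{Z}\setminus\{-1,\dots,-k\})$ already displayed before Remark \ref{rem:coro_2}. Finiteness then splits into two tails. The tail $w\geq 0$ is controlled by Prop. \ref{prop:nothcase}, which involves the minimal-degree part, and the tail $w<-k$ is controlled by Prop. \ref{prop:nothcase_laurent}, which involves the maximal-degree part. For the homogeneous case, Remark \ref{rem:coro_2} says this piece is either empty or all of $\mathbb{Z}\setminus[k-1]$; so under finiteness it must be empty, which gives the parenthetical claim.

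The main obstacle is matching the logical form of the statement. Theorem \ref{thm:crit_fin_mero} characterises finiteness by $P$ avoiding an intersection of loci, whereas finiteness of the union requires each piece to be finite, which corresponds to $P$ avoiding each locus separately. I expect to have to read ``Laurent regular'' exactly as the condition delivered by Theorem \ref{thm:crit_fin_mero}, in the same convention already used in Theorem \ref{thm:crit_fin}. With that reading, the equivalence becomes a direct assembly of the two pieces, and I would state explicitly that I am inheriting the paper's convention at that point.
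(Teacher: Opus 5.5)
Your route is the paper's own: its proof just cites Corollary \ref{coro:coro1} and Theorem \ref{thm:crit_fin_mero}. But the step you call the main obstacle is a genuine gap, and ``inheriting the paper's convention'' does not close it. There is no alternative reading of Laurent regularity. By definition, $X_L(S)=Y(S)\cap V(\Delta_{S_{min},\emptyset}|_{w=0})\cap V(\Delta_{S_{max},\emptyset}|_{w=0})$ is exactly the locus in Theorem \ref{thm:crit_fin_mero}. Your own piecewise analysis, however, shows that $\#LSol_k(P)<\infty$ requires $P$ to avoid each $Y_R$, $V(\Delta_{S_{min},\emptyset}|_{w=0})$ and $V(\Delta_{S_{max},\emptyset}|_{w=0})$ separately, i.e.\ their union. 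For the same reason, your sentence ``this piece is finite iff $P\notin Y(S)$'' needs $\bigcup_R Y_R$, not $Y(S)=\bigcap_R Y_R$. With intersections the statement is false. If $\#S_{min}=1$, then $V(\Delta_{S_{min},\emptyset}|_{w=0})=\{\infty\}$, so $X_L(S)\cap\mathbb{Z}_S=\emptyset$ and every $P\in\mathbb{Z}_S$ is Laurent regular. Yet Proposition \ref{prop:nothcase_laurent} still gives infinitely many $t^q$ with $q<0$ whenever $P_{max}\in V(\Delta_{S_{max},\emptyset}|_{w=0})$. Concretely, take $P=y_1+y_0^2+t^2y_1^2$ with $k=1$. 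The three orders at $t^q$ are $q-1$, $2q$, $2q$, so $t^q\in LSol_1(P)$ for every $q<0$ and (2) fails. Citing Theorem \ref{thm:crit_fin_mero} therefore means citing a false statement, and the paper's one-line proof has the same defect.

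What your argument does prove is that $\#LSol_k(P)<\infty$ if and only if (1) and (2) hold, and this needs no loci at all. If a bare $R\subset[k-1]$ were a solution, the forward half of Remark \ref{prop:ud} would make $R\cup\{q\}$ a solution for all $q\gg0$. So finiteness forces every element of $LSol_k(P,[k-1])$ to have the form $R\cup\{q\}$. You should say this explicitly, since (1) demands it and you only discuss finiteness of the $q$'s. There are finitely many $R$, and Remark \ref{rem:coro_2} gives emptiness in the homogeneous case. To turn this into a regularity criterion you must redefine $Y(S)$ and $X_L(S)$ as unions. You must also repair the converse half of Remark \ref{prop:ud} that you quote. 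When no $M\in S$ has $Supp(M)\subset[0,r_s]$, $\Delta_{S,R}|_{z=\infty}\equiv\infty$ has empty bend locus, so $P\notin Y_R$, yet the monomials of minimal $d(M,R)$ may tie for all large $q$. For $P=y_1+y_0y_1$ and $R=\{0\}$ one has $|e_1|_R(z)=|e_0+e_1|_R(z)=z-1$, so $1+t^q\in LSol_1(P)$ for every $q\ge 1$. Even the union locus is empty here, because $S_{min}$ and $S_{max}$ are singletons and $Y_{\{0\}}=\emptyset$. The correct condition for infinitely many $q$ is that $\min\{c(M,R)+a_M\}$, taken over the monomials of minimal $d(M,R)$, is attained at least twice. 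This reduces to $P\in Y_R$ only when that minimal $d(M,R)$ is $0$.
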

\begin{proof}
    This follows from Corollary \ref{coro:coro1} and Theorem \ref{thm:crit_fin_mero}. 
\end{proof}

So, in order to start this study, we need to check $Sol_k(P,[k-1])$ (which is independent from the notion  of Laurent solution) and it is done in Section \ref{sec.generic_solutions}, and $LSol_k(P,[k-1]^c)$ only when $S$ is not homogeneous, which is done in  Section   \ref{sec:ordatleastk}.

\subsection{Tropical Hahn solutions}
\label{sec:ths}
In the previous section, motivated by the application of tropical methods towards the study of the Painlevé property an \textsc{oade} $\mathfrak{P}$ at the point $t_0=0\in\mathbb{C}$, and the set $S(\mathfrak{P},0)$ from \eqref{eq:OCDE_tadicord}, we extended the set of tropical solutions of an \textsc{otde} $P$ from $\mathbb{B}[\![t^{\mathbb{N}}]\!]$ to $\mathbb{B}[\![t^{\mathbb{Z}}]\!]$. However, the 
Painlevé property   not only considers the $t$-adic order of the (germs at $t_0=0$ of) meromorphic  solutions $\Phi$ for equations $\mathfrak{P}$, but it also considers the possibility that the $t$-adic order of these germs of solutions may be a rational number. 

The theory of tropical solutions of an \textsc{otde} $P$  in $\mathbb{B}[\![t^{\Gamma}]\!]$ that arises from the cases $\Gamma=\mathbb{N}$ (formal power series) and $\Gamma=\mathbb{Z}$ (formal Laurent series)  differs little from the theory that arises from the cases
$\Gamma=\mathbb{Q}$ (formal Puiseux series) and $\Gamma=\mathbb{R}$ (formal Hahn series), due to the discrete-non discrete dichotomy. Thus we shall skip the rational case and go directly to the case $\Gamma=\mathbb{R}$.

Let us denote by  $\mathcal{H}_{\mathbb{C},0}\subset \mathbb{C}[\![t^{\mathbb{R}}]\!]$  the set consisting of germs of {\it Hahn functions} at $t_0=0\in\mathbb{C}$ (elements  $\Phi\in \mathbb{C}[\![t^{\mathbb{R}}]\!]$ that converge in an open neighborhood around $0$).  As before, the set $$HSol(\mathfrak{P})=Sol_{\mathbb{C}[\![t^{\mathbb{R}}]\!]}(\mathfrak{P})=\{\Phi\in \mathbb{C}[\![t^{\mathbb{R}}]\!]\::\:\mathfrak{P}(\Phi)=0\}$$ is the algebraic generalization of the set of germs of Hahn functions  which are also solutions of $\mathfrak{P}$. Since $\Gamma\subset\mathbb{R}$ for $\Gamma=\mathbb{N}$ and $\Gamma=\mathbb{Z}$, this set contains as subsets our previous sets of formal germs of solutions at $0\in\mathbb{C}$. See diagram \eqref{diag:relevant}.

 It was shown in \cite{GS} that if $\Phi\in \mathbb{C}[\![t^{\mathbb{R}}]\!]$ is a solution to some $\mathfrak{P}\in\mathbb{C}[\![t]\!]\{y\}$, then $Supp(\Phi)$ is countable. Thus, we will restrict ourselves to the subfield of real Hahn series with countable support, which we also denote by $\mathbb{C}[\![t^{\mathbb{R}}]\!]$. 

We now introduce tropical analogues of these concepts: recall that $\mathbb{B}[\![t^{\mathbb{R}}]\!]$ denotes the semiring consisting of  objects of the form $\varphi=\sum_{i\in A}1_{\mathbb{B}}t^i$ where $A\subset \mathbb{R}$ is well-ordered (and countable).

\begin{definition}
\label{def:Hahn_tropsols}
    For $P=P(y)=\sum_{M_i\in S}t^{a_{M_i}}E_{M_i}$ as in \eqref{eq:otde}, we denote by 
     \begin{equation}
         HSol(P)=Sol_{\mathbb{B}[\![t^{\mathbb{R}}]\!]}(P)=\{\varphi\in\mathbb{B}[\![t^{\mathbb{R}}]\!]\::\:ord_t(P(\varphi)\!)\text{``} = \text{'' } 0_{\mathbb{T}}\}
     \end{equation}
     the set of tropical Hahn solutions. 
\end{definition}

We repeat the procedure from Section \ref{sec:trop_LS}, with $\Gamma=\mathbb{R}$ in place of $\Gamma=\mathbb{Z}$.  If we write $\mathbb{R}=[k-1]\sqcup[k-1]^c$, and if $\varphi\in \mathbb{B}[\![t^{\mathbb{R}}]\!]$ satisfies $ord_t(\varphi)\in[k-1]$, then it can be expressed uniquely as 
\begin{equation}
\varphi=\varphi_{\mathbb{N}}+\varphi_{\mathbb{R}\setminus\mathbb{N}},\quad\text{with }ord_t(\varphi)=ord_t(\varphi_{\mathbb{N}})<ord_t(\varphi_{\mathbb{R}\setminus\mathbb{N}}),
\end{equation}
where $0\neq\varphi_{\mathbb{N}}\in\mathcal{P}(\mathbb{N})$ and $\varphi_{\mathbb{R}\setminus\mathbb{N}}\in \mathcal{P}(\mathbb{R})\setminus\mathcal{P}(\mathbb{N})$. If additionally $\varphi\notin \mathcal{P}([k-1])$, then $\varphi_{\mathbb{R}\setminus\mathbb{N}}\neq0.$

If we convene that now we take our complements with respect to $\mathbb{R}$ instead of  $\mathbb{Z}$, we extend Definition \ref{def:eksN} of $\pi_k$ to $\pi_k:\mathbb{B}[\![t^{\mathbb{R}}]\!]\xrightarrow[]{}\mathbb{B}[\![t^{\mathbb{R}}]\!]$ as follows:

\begin{equation}
        \pi_k(\varphi)=\begin{cases}
       t^{ord_t\varphi} &\text{if }ord_t(\varphi)\notin[k-1],\\
            \varphi,&\text{if }\varphi\in \mathcal{P}([k-1]),\\
            \varphi_{\mathbb{N}}\cap[0,min\{k-1,\lfloor ord_t(\varphi_{\mathbb{R}\setminus\mathbb{N}})\rfloor\}]+t^{ord_t\varphi_{\mathbb{R}\setminus\mathbb{N}}},&\text{otherwise.}
        \end{cases}
    \end{equation}

We extend the definition of elementary Laurent $k$-series to tropical Hahn series as the image $Im(\pi_k)\subset \mathbb{B}[\![t^{\mathbb{R}}]\!]$; these are of the form $\varphi=R\cup B$ with  

\begin{enumerate}
    \item $ord_t(\varphi)\notin[k-1]$, then $R=\emptyset$ and  $B=\{\lambda\}$ with $\lambda\in\mathbb{R}\setminus\{0,\ldots,k-1\}$,
    \item $ord_t(\varphi)\in[k-1]$, then we have the sub-cases 
         \begin{enumerate}
    \item 
    $\varphi\in \mathcal{P}([k-1])$, then $R=\varphi$ and $B=0$,
    \item  $\varphi=R\cup\{\lambda\}$, with $\lambda\in\mathbb{R}_{\geq0}\setminus[k-1]$ and $\emptyset\neq R\subset[0,min\{k-1,\lfloor \lambda\rfloor\}]$.
\end{enumerate}
\end{enumerate}

Similarly, we have an identification 
\begin{equation}
    \begin{aligned}
        \pi_k(\mathbb{B}[\![t^{\mathbb{R}}]\!])&\xrightarrow[]{}\{(\emptyset,\infty)\neq(R,\lambda)\in\mathcal{P}([k-1])\times \mathbb{T}\setminus[k-1]\::\:R\subset\mathcal{P}(min\{k-1,\lfloor \lambda\rfloor\})\}\\
        R\cup B&\mapsto (R,ord_t(B)\!)
    \end{aligned}
\end{equation}
thus, sometimes we will use the notation $\varphi=(R,\lambda)$ for $\varphi\in \pi_k(\mathbb{B}[\![t^{\mathbb{R}}]\!])$, or even  $\varphi=R\cup \lambda$.

In order to prove the generalization of Lemma \ref{lemma_computations} for Hahn series, we need   a preliminary  result.
\begin{lemma}
\label{lemma:new_Hahn}
Let $\varphi_1+\varphi_2=\varphi\in \mathbb{B}[\![t^{\mathbb{R}}]\!]$, where $0\neq\varphi_1\in\mathcal{P}(\mathbb{N})$ and $0\neq\varphi_2\in \mathcal{P}(\mathbb{R})\setminus\mathcal{P}(\mathbb{N})$ satisfying $ord_t(\varphi_{2})>ord_t(\varphi)=ord_t(\varphi_{1})\in[k-1]$.

Set $R:=\varphi_1\cap[0,min\{k-1,\lfloor ord_t(\varphi_2)\rfloor\}]$, then for every $M\in\mathbb{N}^{[k]}$ we have
\begin{equation*}
        ord_t(E_M(\varphi)\!)=ord_t(E_M(R\cup\{ord_t(\varphi_2)\})\!)=
        |M|_{R}(ord_t(\varphi_2)\!),
    \end{equation*}
where $|M|_{R}(z)$ (respectively $|M|_{R}(q)$) is given in  Definition \ref{def:values_of_evaluations} (respectively \eqref{eq:def_M_R_infty}).
    \end{lemma}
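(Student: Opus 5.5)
The plan is to reduce everything to the $t$-adic orders of the derivatives $\tfrac{d^j\varphi}{dt^j}$ for $j\in\{0,\ldots,k\}$, exactly as in the proof of Lemma \ref{lemma_computations}. Since $ord_t$ is multiplicative on $\mathbb{B}[\![t^{\mathbb{R}}]\!]$ (there is no cancellation over $\mathbb{B}$), we have $ord_t(E_M(\varphi)\!)=\sum_j m_j\,ord_t(\tfrac{d^j\varphi}{dt^j})$. Hence it suffices to show that for every $j\in\{0,\ldots,k\}$ the orders $ord_t(\tfrac{d^j\varphi}{dt^j})$ and $ord_t(\tfrac{d^j(R\cup\{\lambda\})}{dt^j})$ agree, where $\lambda:=ord_t(\varphi_2)$. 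It then remains to check that this common value, summed against $M$, is $|M|_R(\lambda)$ as in Definition \ref{def:values_of_evaluations}.

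First, compute the derivative of a single monomial tropically. For a Boolean monomial $t^\alpha$, the $j$-th derivative is $t^{\alpha-j}$ unless the classical coefficient $\alpha(\alpha-1)\cdots(\alpha-j+1)$ vanishes. This happens exactly when $\alpha\in\mathbb{N}$ and $\alpha<j$. So exponents in $\mathbb{R}\setminus\mathbb{N}$ never die under differentiation, while a natural exponent $r$ dies once $j>r$. Since $\tfrac{d^j}{dt^j}$ acts termwise and shifts by $-j$, we get
\[
ord_t\bigl(\tfrac{d^j\varphi}{dt^j}\bigr)=\min\{\alpha\in\varphi\::\:\alpha\notin\mathbb{N}\text{ or }\alpha\geq j\}-j .
\]
Here the set over which the minimum is taken is nonempty, because $\varphi_2\neq0$ contributes $\lambda$.

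Next, identify this minimum. Every element of $\varphi_2$ is at least $\lambda$, and $\lambda\in\varphi_2$ survives for all $j$. Every element $r\in\varphi_1$ with $r>\min\{k-1,\lfloor\lambda\rfloor\}$ satisfies either $r>\lambda$ (if $\lfloor\lambda\rfloor\le k-1$, then $r\ge\lfloor\lambda\rfloor+1>\lambda$) or $r\geq k\ge j$. In the first case $r$ never beats $\lambda$. In the second case $r\geq k>k-1\ge$ every element of $R$; but if $\lfloor\lambda\rfloor\ge k$ then $\lambda\ge k$, so I would have to compare $r$ with $\lambda$.

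This is the delicate point. Natural exponents $r\in\varphi_1$ with $k\le r<\lambda$ would give order $r-j<\lambda-j$, and they are discarded by $R$. I expect this to be the main obstacle, and I would handle it by rereading the hypothesis $\varphi_2\in\mathcal{P}(\mathbb{R})\setminus\mathcal{P}(\mathbb{N})$ as in the decomposition $\varphi=\varphi_{\mathbb{N}}+\varphi_{\mathbb{R}\setminus\mathbb{N}}$. Under that decomposition, $\varphi_1=\varphi\cap\mathbb{N}$ could still contain such $r$. If so, the statement would need the minimum of $\lambda$ and $\min(\varphi_1\cap\mathbb{Z}_{\ge k})$ in place of $\lambda$, in the spirit of Definition \ref{def:eksN}. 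My intention is to check that case carefully and, if necessary, interpret $ord_t(\varphi_2)$ as the least surviving exponent beyond $R$. The remaining exponents, those in $\varphi_1\cap[0,\min\{k-1,\lfloor\lambda\rfloor\}]=R$, appear identically in $\varphi$ and in $R\cup\{\lambda\}$. Thus the two minima coincide, which gives the first equality.

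Finally, for the second equality, write $R=\{r_1<\cdots<r_s\}$. The displayed formula gives $ord_t(\tfrac{d^j}{dt^j}(R\cup\{\lambda\}))=r_i-j$ when $r_{i-1}<j\le r_i$, and $\lambda-j$ when $j>r_s$; here $\lambda>r_s$ because $r_s\le\lfloor\lambda\rfloor$ and $\lambda\notin\mathbb{N}$ or $\lambda\ge k$. Multiplying by $m_j$ and summing reproduces verbatim the expression for $|M|_R(z)$ evaluated at $z=\lambda$, as in the proof of Lemma \ref{lemma:explicit_computations}. Since $\lambda<\infty$, the first case of \eqref{eq:def_M_R_infty} applies.
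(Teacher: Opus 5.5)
Your reduction is sound, and it is the paper's strategy in a cleaner, unified form. Over $\mathbb{B}$ one has $\tfrac{d^j}{dt^j}t^\alpha=t^{\alpha-j}$ unless $\alpha\in\{0,\ldots,j-1\}$, and $ord_t$ is additive on products. Your last paragraph then correctly gives $ord_t(E_M(R\cup\{\lambda\}))=|M|_R(\lambda)$ once $\lambda\notin\mathbb{N}$. The trouble is the obstacle you flagged: it cannot be ``checked carefully'' away. It is a counterexample to the first equality, so the lemma as stated is false and no proof of it can be completed. Take $k=1$, $\varphi_1=\{0,3\}$, $\varphi_2=\{7/2\}$; note that even $\varphi_2\subset\mathbb{R}\setminus\mathbb{N}$ holds. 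All hypotheses are satisfied, $\lambda=7/2$, $R=\varphi_1\cap[0,0]=\{0\}$, and for $M=e_1$
\[
ord_t(E_M(\varphi))=ord_t(t^{2}+t^{5/2})=2\neq\tfrac{5}{2}=ord_t(E_M(t^0+t^{7/2}))=|e_1|_{\{0\}}(\tfrac{7}{2}).
\]
The paper's own proof has exactly this hole. In its case $\lambda\ge k$ it asserts $B=\lambda$ and cites Lemma \ref{lemma:explicit_computations}, silently dropping $\varphi_1\cap[k,\lambda)$. Your rereading of $\varphi_2$ as $\varphi_{\mathbb{R}\setminus\mathbb{N}}$ is also necessary, not merely convenient. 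Under the literal hypothesis $\varphi_2\notin\mathcal{P}(\mathbb{N})$, $\lambda$ may be an integer that dies under differentiation. For example, $k=2$, $\varphi_1=\{0\}$, $\varphi_2=\{1,\tfrac32\}$, $M=e_2$ gives $-\tfrac12$ on the left and $\infty$ in the middle.

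So do not ``interpret $ord_t(\varphi_2)$'' as the least surviving exponent beyond $R$; that is circular, since $R$ is defined from $\lambda$. Instead, state and prove a corrected lemma. Put $\lambda':=\min(\varphi\setminus\{0,\ldots,k-1\})$, which under your reading equals $\min(\{\lambda\}\cup(\varphi_1\cap\mathbb{Z}_{\ge k}))$, and $R':=\varphi\cap\{0,\ldots,\min\{k-1,\lfloor\lambda'\rfloor\}\}$. Then $\lambda'$ is either non-integral or $\ge k$, so it survives all $j\le k$ derivatives and exceeds $\max R'$. Every exponent of $\varphi$ outside $R'$ is $\ge\lambda'$: the omitted integers in $\{0,\ldots,k-1\}$ are $\ge\lfloor\lambda'\rfloor+1>\lambda'$. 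Your surviving-exponent formula then gives verbatim
\[
ord_t(E_M(\varphi))=ord_t(E_M(R'\cup\{\lambda'\}))=|M|_{R'}(\lambda').
\]
Alternatively, the original statement holds under the extra hypothesis $\varphi_1\cap[k,\lambda)=\emptyset$, which is automatic when $\lambda<k$. The Hahn extension of $\pi_k$ is built from the same formula and needs the same fix. For $k=1$ it sends $t^0+t^3+t^{7/2}$ to $t^0+t^{7/2}$, which breaks point (4) of Lemma \ref{lemma_computations} in the Hahn setting.
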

\begin{proof}
We need to compute $ord_t(\frac{d^j\varphi}{dt^j})$ for all $j\in\{0,\ldots,k\}$. Let $\emptyset\neq \varphi_1\cap\{0,\ldots,k-1\}=\{0\leq r_1<\cdots<r_s\leq k-1\}$, and let $\lambda=ord_t(\varphi_2)$, so $ord_t(\varphi_1)=r_1< \lambda.$

 If $\lambda\geq k$, then $min\{k-1,\lfloor ord_t(\varphi_2)\rfloor\}=k-1$, and $\varphi_1\cap\{0,\ldots,k-1\}=R$ and $B=\lambda$, so  $ord_t(E_M(\varphi)\!)=ord_t(E_M(R\cup B)\!)$ by Lemma \ref{lemma:explicit_computations}.

 If $\lambda< k$, then $r_1\leq \lfloor\lambda\rfloor\leq k-1$ and $min\{k-1,\lfloor ord_t(\varphi_2)\rfloor\}=\lfloor\lambda\rfloor$.  For $j\in\{0,\ldots,k\}$ and $\emptyset\neq \varphi_1\cap\{0,\ldots,k-1\}$ as above, we have  
    \begin{equation}
    \label{eq:gen_comp}
        ord_t(\frac{d^j\varphi}{dt^j})=\begin{cases}
            r_i-j,&\text{ if }r_{i-1}<j\leq r_i\leq \lfloor\lambda\rfloor,\\
            \lambda-j,&\text{ if }\lfloor\lambda\rfloor+1\leq j\\
        \end{cases}
    \end{equation}
    so the evaluation $ord_t(E_M(\varphi)\!)$ for  $M=\sum_j^km_je_j$ only depends on $R=\varphi_1\cap\{0,\ldots,min\{k-1,\lfloor\lambda\rfloor\}\}=\{0\leq r_1<\cdots<r_a\}$, and this is just \begin{equation*}
        ord_t(E_M(R\cup\lambda)\!)=\sum_{j=0}^{r_1}m_{j}[r_1-j]+\cdots+\sum_{j=r_{a-1}+1}^{r_a}m_{j}[r_a-j]+\sum_{j=r_a+1}^{k}m_{j}[\lambda-j]=
        |M|_R(\lambda),
    \end{equation*}
which ends the proof.
\end{proof}

Endow $HSol(P)$  with the order induced by inclusion of subsets of $\mathcal{P}(\mathbb{R})$, and denote by $\mu(HSol(P)\!)$ the minimal elements of the poset $HSol(P)$.  We have  the following extension of 
Prop. \ref {prop:ot_basis} (the proof is the same).

\begin{theorem}
\label{prop:ot_basis_Hahn}
Let $P$ be an \textsc{otde} as in \eqref{eq:otde}.
    Any $\varphi\in HSol(P)$ contains a minimal solution.
\end{theorem}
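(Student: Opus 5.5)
The argument follows the proof of Prop. \ref{prop:ot_basis}, with the Hahn version of $\pi_k$ in place of the formal one. Fix $\varphi\in HSol(P)$. I will show two things: first, $\pi_k(\varphi)\subseteq\varphi$ and $\pi_k(\varphi)\in HSol(P)$; second, every element of $HSol(P)\cap Im(\pi_k)$ contains a minimal solution. Since $\varphi$ contains $\pi_k(\varphi)$, which in turn contains a minimal solution, this proves the statement.

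For the first step, I will check that $ord_t(E_M(\varphi))=ord_t(E_M(\pi_k(\varphi)))$ for every $M\in\mathbb{N}^{[k]}$, following the three cases in the definition of $\pi_k$ on $\mathbb{B}[\![t^{\mathbb{R}}]\!]$.
\begin{enumerate}
\item If $\lambda=ord_t(\varphi)\notin[k-1]$, then $\pi_k(\varphi)=t^\lambda$. Since $\lambda\notin\{0,\ldots,k-1\}$, none of the first $k$ derivatives of $t^\lambda$ vanishes. Hence $ord_t(\frac{d^j\varphi}{dt^j})=\lambda-j$ for all $j\le k$, both for $\varphi$ and for $t^\lambda$.
\item If $\varphi\in\mathcal{P}([k-1])$, then $\pi_k(\varphi)=\varphi$ and there is nothing to prove.
\item Otherwise, write $\varphi=\varphi_{\mathbb{N}}+\varphi_{\mathbb{R}\setminus\mathbb{N}}$.
\begin{enumerate}
\item If $\varphi_{\mathbb{R}\setminus\mathbb{N}}\neq0$, Lemma \ref{lemma:new_Hahn} gives $ord_t(E_M(\varphi))=|M|_R(ord_t(\varphi_{\mathbb{R}\setminus\mathbb{N}}))$, with $R$ exactly the set used to define $\pi_k(\varphi)$. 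Applying the same lemma to $\pi_k(\varphi)$ gives the same value.
\item If $\varphi_{\mathbb{R}\setminus\mathbb{N}}=0$, then $\varphi\subset\mathbb{N}$ and we are back in Lemma \ref{lemma_computations}.
\end{enumerate}
\end{enumerate}
Since $P$ has order $k$, equality of these orders for all $M\in S$ gives $ord_t(P(\varphi))=ord_t(P(\pi_k(\varphi)))$, and in particular tropical vanishing transfers from $\varphi$ to $\pi_k(\varphi)$. The inclusion $\pi_k(\varphi)\subseteq\varphi$ holds by construction: in each case $\pi_k(\varphi)$ keeps the order term and a subset of the integer exponents of $\varphi$.

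For the second step, every element of $Im(\pi_k)$ has finite support, with at most $k+1$ exponents. Among the subsets of $\pi_k(\varphi)$ that lie in $HSol(P)$, pick one of minimal cardinality. It is minimal in the poset $HSol(P)$, because any solution strictly contained in it would have smaller cardinality. This finite-support argument is the same as in the formal case.

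The main obstacle is the edge case in step one where $\varphi_{\mathbb{R}\setminus\mathbb{N}}=0$ but the integer part of $\varphi$ still extends beyond $[k-1]$. There the displayed formula for $\pi_k$ is not literally defined, because $ord_t(\varphi_{\mathbb{R}\setminus\mathbb{N}})=\infty$. I would resolve it by reading $\pi_k$ on $\mathcal{P}(\mathbb{N})\subset\mathcal{P}(\mathbb{R})$ as the formal $\pi_k$ of Def. \ref{def:eksN}, that is, $\varphi_{[k-1]}+t^{ord_t\varphi_{[k-1]^c}}$. With that reading, Lemma \ref{lemma_computations} applies verbatim. A second point to check is the case $ord_t(\varphi_{\mathbb{R}\setminus\mathbb{N}})<k$ in Lemma \ref{lemma:new_Hahn}: the truncation of $R$ at $\lfloor\lambda\rfloor$ must match the definition of $\pi_k$, and it does by construction.
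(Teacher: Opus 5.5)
\textbf{Gap in step one, case 3(a).} Your architecture is the paper's: truncate to a finite solution contained in $\varphi$, then take a minimal-cardinality sub-solution. Step two is fine, and so is your treatment of the $\varphi_{\mathbb{R}\setminus\mathbb{N}}=0$ edge case. The problem is case 3(a), and it is not the obstacle you flagged.

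Lemma~\ref{lemma:new_Hahn} is false whenever $\varphi$ has an integer exponent $q\geq k$ with $q<\lambda:=ord_t(\varphi_{\mathbb{R}\setminus\mathbb{N}})$. The Hahn $\pi_k$ discards such a $q$, yet $q$ controls the orders of the derivatives. A concrete counterexample:
\begin{itemize}
\item Take $k=1$, $P=y_0\oplus y_1$ (all coefficients $0$, support $\{e_0,e_1\}$), and $\varphi=t^0+t^1+t^{3/2}$.
\item Then $ord_t(\varphi)=0$ and $ord_t(\tfrac{d\varphi}{dt})=0$, the latter coming from $t^1$. So $\varphi\in HSol(P)$.
\item But $\pi_1(\varphi)=t^0+t^{3/2}$ has $ord_t(\tfrac{d}{dt}\pi_1(\varphi))=1/2$. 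Hence $ord_t(P(\pi_1(\varphi)))=\min\{0,1/2\}$ does not vanish tropically, and $\pi_1(\varphi)\notin HSol(P)$.
\item Lemma~\ref{lemma:new_Hahn} predicts $|e_1|_{\{0\}}(3/2)=1/2$, whereas the true value is $0$.
\end{itemize}
In the case $\lambda\geq k$, the lemma's proof tacitly assumes that $\varphi_1$ has no exponent in $[k,\lambda)$. The paper's one-line proof of this theorem invokes the same lemma, so it has the same defect. The theorem itself is still true.

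\textbf{The fix.} Truncate at the first exponent outside $\{0,\ldots,k-1\}$, whether integer or not.
\begin{itemize}
\item Put $\mu:=\min\bigl(\varphi\setminus\{0,\ldots,k-1\}\bigr)\in\mathbb{R}\cup\{\infty\}$.
\item Put $\psi:=\bigl(\varphi\cap\{0,\ldots,k-1\}\cap[0,\mu)\bigr)\cup\{\mu\}$, with $\psi=\varphi$ if $\mu=\infty$.
\end{itemize}
The $j$-th derivative kills exactly the monomials $t^0,\ldots,t^{j-1}$. Hence $ord_t(\tfrac{d^j\varphi}{dt^j})=\min\bigl(\varphi\setminus\{0,\ldots,j-1\}\bigr)-j$ for $0\leq j\leq k$, and likewise for $\psi$.

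These minima agree for $\varphi$ and $\psi$. Every exponent of $\varphi$ discarded in $\psi$ is strictly larger than $\mu$. Meanwhile $\mu$ itself is kept and, since $\mu\notin\{0,\ldots,k-1\}$, is never killed by $\tfrac{d^j}{dt^j}$ for $j\leq k$.

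Therefore $ord_t(E_M(\psi))=ord_t(E_M(\varphi))$ for all $M\in\mathbb{N}^{[k]}$. So $\psi\in HSol(P)$, $\psi\subseteq\varphi$ and $\#\psi\leq k+1$, and your step two finishes the proof. This single definition covers all your cases at once. In the counterexample it gives $\psi=t^0+t^1$, which is indeed a minimal solution.
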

\begin{proof}
    We have that Lemma \ref{lemma_computations} is now valid in this context thanks to Lemma \ref{lemma:new_Hahn}, if we consider the set $HSol_k(P)$ consisting of elementary Hahn $k$-solutions of $P$, then $\mu(HSol(P)\!)\subset HSol_k(P)$, and we have a map $\pi_k:HSol(P)\xrightarrow[]{}HSol_k(P)$.
\end{proof}

Again, the sets $HSol_k(P,[k-1])=\{\varphi\in HSol_k(P)\::\:ord_t(\varphi)\in[k-1]\}$ and $HSol_k(P,[k-1]^c)=\{\varphi\in HSol_k(P)\::\:ord_t(\varphi)\notin[k-1]\}$ make sense in this new context, and yield the decomposition $HSol(P)=HSol_k(P,[k-1])\sqcup HSol_k(P,[k-1]^c).$ 
  
  For $HSol_k(P,[k-1])$, what changes from the Laurent case $\Gamma=\mathbb{Z}$ to the Hahn case $\Gamma=\mathbb{R}$ is that there may be solutions $R\cup\lambda$ with $\lambda\in\bigcup_{a=0}^{k-1}(a,a+1)$ and $\emptyset\neq R\subset[0,\lfloor\lambda\rfloor]$. 
  
  What we shall do now is, for $a=0,\ldots,k-1$  and $\emptyset\neq R\subset[0,a]$ fixed, we will define monomials $|M|_{R,a}(z)$   using \eqref{eq:gen_comp}: given $M=\sum_{j=0}^km_je_j\in \mathbb{N}^{[k]}$, we denote by $M_{\leq a}=\sum_{j=0}^{a}m_je_j$ and $M_{\geq a+1}=\sum_{j=a+1}^{k}m_je_j$.  Now we define $|M|_{R,a}(z)=|M_{\leq a}|_R(\infty)+|M_{\geq a+1}|_{\emptyset}(z)$ and \begin{equation}
  \label{eq:Hahn_new_disc}
        \Delta_{S,R,a}:=\bigoplus_{M\in S}|M|_{R,a}(z)\odot x_M\in\mathbb{T}[x_{M_1},\ldots,x_{M_s},z].
    \end{equation}

  If we define for $a=0,\ldots,k-2$ the interval $I_a=(a,a+1)\subset\mathbb{R}$, and $I_{k-1}=(k-1,\infty)\subset\mathbb{R}$, then $\mathbb{R}_{\geq0}\setminus[k-1]=\bigsqcup_{a=0}^{k-1}I_a$, and  we  have an analogue of Proposition \ref{prop:discs}. 

\begin{proposition}
\label{prop:discs_Hahn}
    Let $S$ be a support set,   $a\in[k-1]$, and  $\emptyset\neq R\in \mathcal{P}([a])$. Let $R\cup \lambda=\varphi\in Im(\pi_k)$ with $\lambda\in I_a$. Then   $\varphi\in HSol_k(P)$ for $P\in\mathbb{Z}_S$ if and only if $(P,\lambda)\in V(\Delta_{S,R,a})$.
\end{proposition}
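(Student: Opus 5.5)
The plan mirrors the proof of Prop. \ref{prop:discs}. The key is a Hahn analogue of Lemma \ref{lemma:explicit_computations}, namely
\begin{equation*}
ord_t(E_M(\varphi)\!)=|M|_{R,a}(\lambda)\quad\text{for every }M\in\mathbb{N}^{[k]}\text{ and }\varphi=R\cup\lambda,\ \lambda\in I_a.
\end{equation*}
Once this holds, the rest is formal. For $P=(a_M)\in\mathbb{Z}_S$ we get
\begin{equation*}
ev_{\Delta_{S,R,a}}(P,\lambda)=\min_{M\in S}\{|M|_{R,a}(\lambda)+a_M\}=\min_{M\in S}\{ord_t(E_M(\varphi)\!)+a_M\}=ord_t(P(\varphi)\!).
\end{equation*}
So one side vanishes tropically exactly when the other does. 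Since $\varphi\in Im(\pi_k)$ by hypothesis, $\varphi\in HSol_k(P)$ if and only if $\varphi\in HSol(P)$, which gives the claim.

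To prove the displayed identity, I will first derive the orders of the derivatives from \eqref{eq:gen_comp} in Lemma \ref{lemma:new_Hahn}. I treat $\varphi=R\cup\{\lambda\}$ as $\varphi_1+\varphi_2$ with $\varphi_1=R$ and $\varphi_2=t^\lambda$.

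\textbf{Case $a\le k-2$.} Here $\lfloor\lambda\rfloor=a$ and $\lambda\notin\mathbb{N}$, so the lemma applies. Write $R=\{r_1<\cdots<r_s\}\subset[0,a]$. For $j\le a$, the $j$-th derivative has order $r_i-j$ when $r_{i-1}<j\le r_i$, and it has the $\lambda$-contribution $\lambda-j$ when $j>r_s$. For $j\ge a+1$, every monomial of $R$ is killed and the order is $\lambda-j$.

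I expect the bookkeeping for $r_s<j\le a$ to be the main obstacle. In this range the derivative $d^j\varphi/dt^j$ is $t^{\lambda-j}$, which is nonzero because $\lambda\notin\mathbb{N}$ is never annihilated. However, $|M_{\le a}|_R(\infty)$ assigns these indices the value $\infty$ whenever $m_j\neq0$, by \eqref{eq:def_M_R_infty}. So either $R$ must be understood to satisfy $r_s=a$ in this setting, or the definition $|M|_{R,a}(z)=|M_{\le a}|_R(\infty)+|M_{\ge a+1}|_\emptyset(z)$ must be read with indices $r_s<j\le a$ contributing $m_j(\lambda-j)$. In that reading it coincides with $|M|_R(\lambda)$ from Lemma \ref{lemma:new_Hahn}.

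To handle this, I will first try to check it against \eqref{eq:gen_comp}: that formula gives $\lambda-j$ exactly for $j\ge\lfloor\lambda\rfloor+1$, and it is silent on $r_s<j\le a$. I will then adopt the interpretation $|M|_{R,a}(\lambda)=|M|_R(\lambda)$, using the same $c(M,R)+d(M,R)\lambda$ formula of Definition \ref{def:values_of_evaluations} with the tail sum starting at $r_s+1$. This is consistent with Lemma \ref{lemma:new_Hahn}, which is the statement assumed earlier in the paper.

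\textbf{Case $a=k-1$.} Here $\lambda>k-1$, and if $\lambda\ge k$ everything reduces directly to Lemma \ref{lemma:explicit_computations}. For $\lambda\in(k-1,k)$ the same computation as in the first case applies.

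In both cases, matching the two sides term by term, $\sum_j m_j\,ord_t(d^j\varphi/dt^j)$ against the monomial $|M|_{R,a}(z)$ evaluated at $z=\lambda$, gives the identity and hence the proposition.
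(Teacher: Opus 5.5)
Your argument follows the paper's: match $ord_t(E_M(\varphi))$ term by term with the monomials of $\Delta_{S,R,a}$ via Lemma \ref{lemma:new_Hahn}, so that $ev_{\Delta_{S,R,a}}(P,\lambda)=ord_t(P(\varphi))$ with the same ties. You depart from it only at the step you flagged, and you are right to flag it. The paper's proof simply asserts $|M|_R(\lambda)=|M_{\leq a}|_R(\infty)+|M_{\geq a+1}|_{\emptyset}(\lambda)$, and also $|M|_{R,k-1}(z)=|M|_R(z)$ for $a=k-1$. Both fail whenever $\max R<a$ and $m_j\neq 0$ for some $\max R<j\leq a$, because the right-hand side is then $\infty$ by \eqref{eq:def_M_R_infty}.

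This is a real defect, not an ambiguity: with the literal definition the statement is false. Take $k=2$, $S=\{e_1,e_2\}$, $P=y_1+t\,y_2$, $R=\{0\}$, $a=1$ and any $\lambda>1$. Then $ord_t(P(1+t^\lambda))=\min\{\lambda-1,\,1+(\lambda-2)\}$ is a tie, so $1+t^\lambda$ is a solution. But $|e_1|_{\{0\},1}\equiv\infty$, so the minimum of $\Delta_{S,\{0\},1}$ at $(P,\lambda)$ is attained only by $e_2$, and $(P,\lambda)\notin V(\Delta_{S,\{0\},1})$. At $\lambda=2$ this criterion even disagrees with Proposition \ref{prop:discs}.

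Of your two readings only the second works. Requiring $\max R=a$ would discard series such as $1+t^{3/2}$ or $1+t^2$ (for $k=2$), which the paper's description of $Im(\pi_k)$ explicitly includes. With $|M|_{R,a}:=|M|_R$ your proof is complete. It also shows that $\Delta_{S,R,a}$ is just $\Delta_{S,R}$ with $z$ restricted to $I_a$; the index $a$ only records which $R$ are admissible for a given $\lambda$.

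One minor point: for non-integer $\lambda\geq k$, Lemma \ref{lemma:explicit_computations} does not literally apply, since it concerns integer exponents. Your derivative computation from the first case works verbatim on all of $I_{k-1}$, so that case split is unnecessary.
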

    \begin{proof}
        By Lemma \ref{lemma:new_Hahn}, for $M\in \mathbb{N}^{[k]}$, $a=0,\ldots,k-2$ and  $\lambda\in(a,a+1)$, we have $\lfloor\lambda\rfloor=a$, so: 

    \begin{equation*}
ord_t(E_M(R\cup\lambda)\!)=|M|_R(\lambda)=|M_{\leq a}|_R(\infty)+|M_{\geq a+1}|_{\emptyset}(\lambda),
    \end{equation*}
    where $|M|_R(q)$ comes from \eqref{eq:def_M_R_infty}.

    If $a=k-1$, note that $|M|_{R,k-1}(z)=|M|_R(z)$ from Definition \ref{def:values_of_evaluations}, so we can look for solutions $\lambda\in (k-1,k)\cup[k,\infty)=I_{k-1}$, and in this case the conclusion follows from Proposition \ref{prop:discs}.
    \end{proof}

    \begin{remark}
        Note that the $a$ in the definition \eqref{eq:Hahn_new_disc} of the $\Delta_{S,R,a}$  is necessary, as it is not possible to consider a single polynomial to detect all the Hahn solutions $R\cup\lambda$ with $\lambda\in\mathbb{R}_{\geq0}\setminus\{0,\ldots,k-1\}$ and $R\subset [0,min\{k-1,\lfloor\lambda\rfloor\}]$, since the monomials $|M|_{R,a}(z)=|M_{\leq a}|_R(\infty)+|M_{\geq a+1}|_{\emptyset}(z)$ depend on the choice of $a=0,\ldots,k-1$.
    \end{remark}

We have the following extension of Thm. \ref{thm:crit_fin} for the Hahn case. 
\begin{proposition}
\label{thm:H(P)_finite}
    Let $S$ be a support set. For $a\in[k-1]$ and  $\emptyset\neq R\in \mathcal{P}([a])$, the set $Z_{R,a}=\{P\in \mathbb{Z}_S\::\:V(\Delta_{S,R,a})\cap I_a\text{ is infinite}\}\subset (\mathbb{T}^*)^{\#S}$ has dimension at most $\#S-1$. If we set \begin{equation}
        Y_H(S):=\bigcap_{a\in[k-1]}\bigcap_{\emptyset\neq R\subset[a]}Z_{R,a},
    \end{equation}
    then $\#Sol_k(P,[k-1])<\infty$ if and only if $P\notin Y_H(S)$.
\end{proposition}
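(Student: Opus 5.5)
The plan is to reduce everything to the fibre $\pi_1^{-1}(P)\cong\mathbb{T}$, exactly as in the proof of Theorem \ref{thm:crit_fin}, but now working interval by interval with the polynomials $\Delta_{S,R,a}$ in place of $\Delta_{S,R}$. Here $Sol_k(P,[k-1])$ is read in the Hahn sense $HSol_k(P,[k-1])$. By the description of $Im(\pi_k)$ for Hahn series, every element of this set is either some $\emptyset\neq R\subset[k-1]$ (the case $B=0$), or a pair $R\cup\lambda$ with $\lambda\in I_a$ for a unique $a\in[k-1]$ and $\emptyset\neq R\subset[a]$. Since there are only finitely many pairs $(R,a)$, the set $HSol_k(P,[k-1])$ is finite if and only if each fibre
\[
\{\lambda\in I_a\::\:R\cup\lambda\in HSol_k(P)\}
\]
is finite. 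By Proposition \ref{prop:discs_Hahn}, this fibre is exactly $V(\Delta_{S,R,a}|_P(z))\cap I_a$. So finiteness is governed by the sets $Z_{R,a}$, and the characterization will follow once we understand when such a fibre is infinite.

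For a fixed $P$, the specialization $\Delta_{S,R,a}|_P(z)=\min_M\{|M_{\leq a}|_R(\infty)+a_M+d_M z\}$ is a minimum of finitely many affine functions of $z$, some of them constant $+\infty$. Its bend locus in $\mathbb{R}$ is therefore either finite, or it contains a whole interval on which two monomials with the same slope $d_M$ and the same constant tie and stay minimal. On a bounded interval $I_a$ with $a\le k-2$, I will show that infinitely many tie points inside $I_a$ force such a coincidence, since distinct affine functions meet in at most one point. Each such coincidence $|M|_{R,a}+a_M=|M'|_{R,a}+a_{M'}$ with $d_M=d_{M'}$ is a single linear equation in the coordinates $(a_M)$. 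So $Z_{R,a}$ lies in a finite union of hyperplanes of $(\mathbb{T}^*)^{\#S}$, which gives the dimension bound $\le\#S-1$. For $a=k-1$, where $I_{k-1}$ is unbounded, infinitude can also come from a tie as $z\to\infty$. This is the situation of Remark \ref{prop:ud}, where $P\in Y_R$ is again a tropical-hypersurface, hence codimension-one, condition. Either way $Z_{R,a}$ is contained in a proper polyhedral subset of codimension $\ge1$.

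Next comes the equivalence. If $P\notin Z_{R,a}$ for every relevant pair, then all fibres are finite. The remaining sub-case $B=0$ contributes at most the finitely many subsets $R\subset[k-1]$, so $\#HSol_k(P,[k-1])<\infty$. Conversely, infinitude of the solution set means some fibre is infinite, so $P\in Z_{R,a}$ for that pair. The main obstacle is reconciling this with the definition of $Y_H(S)$ as an \emph{intersection} over all $(R,a)$, rather than a union. A literal reading only yields: finiteness holds if and only if $P\notin\bigcup_{a,R}Z_{R,a}$. I would first try to prove that the $Z_{R,a}$ are nested or coincide, via the identity $|M|_{R,a}=|M_{\leq a}|_R(\infty)+|M_{\geq a+1}|_\emptyset(z)$ and the observation that the slopes $d_M=\deg M_{\geq a+1}$ do not depend on $R$. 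If that fails, the correct statement should be read with a union, mirroring the way $Y(S)$ in \eqref{eq:deg_sols} was used in Theorem \ref{thm:crit_fin}, and I would note this explicitly.
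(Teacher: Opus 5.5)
Your argument follows the paper's proof. For each of the finitely many pairs $(R,a)$ you identify the fibre of $\pi_1$ over $P$ with $V(\Delta_{S,R,a}|_P)\cap I_a$ via Proposition~\ref{prop:discs_Hahn}, and conclude that $HSol_k(P,[k-1])$ is finite iff $P\notin Z_{R,a}$ for every pair. You also account for the finitely many elements with $B=0$, which the paper's displayed decomposition omits.

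Your dimension bound is a more explicit version of the paper's appeal to the non-generic locus of $\pi_1|_{V(\Delta_{S,R,a})}$. Infinitely many bend points force two distinct monomials with equal slope and equal constant, so $Z_{R,a}$ lies in finitely many hyperplanes $a_M-a_{M'}=\mathrm{const}$. Two distinct affine functions meet at most once on any interval, so this already handles the unbounded $I_{k-1}$. The detour through $Y_R$ is therefore unnecessary. It would also be insufficient on its own, because an infinite tail of ties can come from two monomials of positive slope (see the second example below).

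On the intersection you are right: the paper's proof itself ends with ``$P\notin Z_{R,a}$ for all pairs $(a,R)$'', i.e.\ the union. Do not try to prove that the $Z_{R,a}$ are nested, because the statement with $\bigcap$ is false. Take $k=2$, $S=\{e_0,2e_0,e_2\}$ and $P=y_0+y_0^2+y_2$ (all $a_M=0$).
\begin{itemize}
\item The pair $(\{0\},1)$ gives $\Delta_{S,R,a}|_P(z)=\min\{0,0,z-2\}$, whose bend locus in $I_1$ is $[2,\infty)$; indeed $1+t^{\lambda}\in HSol_k(P)$ for all $\lambda\geq2$.
\item The pair $(\{1\},1)$ gives $\min\{1,2,z-2\}$, whose bend locus is $\{3\}$.
\end{itemize}
Thus $P\notin Y_H(S)$ although $HSol_k(P,[k-1])$ is infinite. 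The same slip affects $Y(S)$ in \eqref{eq:deg_sols} and Theorem~\ref{thm:crit_fin}.

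A further caution applies equally to the paper's proof: the step you import from Proposition~\ref{prop:discs_Hahn} is not correct as stated. The identity $|M|_R(\lambda)=|M_{\leq a}|_R(\infty)+|M_{\geq a+1}|_{\emptyset}(\lambda)$ fails when $\max R<a$ and $M$ has a nonzero entry in $(\max R,a]$. In that case the right side is $\infty$, while the true order from Lemma~\ref{lemma:new_Hahn} is finite.

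For example, take $k=2$, $S=\{e_1,e_2,3e_1\}$ and $P=y_1+t\,y_2+t^{-10}y_1^3$. One checks that $P$ lies in no $Z_{R,a}$ as literally defined: for $(\{0\},1)$ the monomials $y_1$ and $y_1^3$ become $+\infty$. Yet $1+t^{\lambda}\in HSol_k(P)$ for every $\lambda\geq6$, with the minimum $\lambda-1$ attained by $y_1$ and $t\,y_2$. Here $P\notin Y_{\{0\}}$ as well.

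The fix is to replace $\Delta_{S,R,a}|_P$ by $\Delta_{S,R}|_P(z)$ of Definition~\ref{def:disc_pols}, restricted to $z\in I_a$; this is correct by Lemma~\ref{lemma:new_Hahn}. With that replacement, your fibre argument, hyperplane bound and union conclusion give a complete proof.
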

\begin{proof}
    Let us consider the  incidence variety $V(\Delta_{S,R,a})\subset(\mathbb{T}^*)^s\times\mathbb{T}^*$. By Prop. \ref{prop:discs_Hahn} we have 

  \begin{equation*}
      \{(P,\lambda)\in\mathbb{Z}_S\times I_a\::\:R\cup \lambda\in HSol_k(P)\}=
        V(\Delta_{S,R,a})\cap(\mathbb{Z}^s\times I_a),
  \end{equation*}
where $I_a\subset \mathbb{T}^*$ is defined as $I_a=(a,a+1)$ if $a=0,\ldots,k-2$, and $I_{k-1}=(k-1,\infty)$.
    
    If we take the first projection $\pi_1:(\mathbb{T}^*)^{\#S}\times \mathbb{T}^*\xrightarrow[]{}(\mathbb{T}^*)^{\#S}$ and $P\in \mathbb{Z}_S$ then $\pi_1^{-1}(P)\cong \mathbb{T}$ is a tropical linear variety of dimension 1 and since $V(\Delta_{S,R,a})$ has dimension $\#S$, there exists $Z'_{R,a}\subset (\mathbb{T}^*)^{\#S}$ of dimension at most $\#S-1$ such that the intersection $V(\Delta_{S,R,a})\cap \pi_1^{-1}(P)$ is infinite; the set $Z'_{R,a}$ consists of the points $P\in\mathbb{Z}_S$ where the restriction $\pi_1|_{V(\Delta_{S,R,a})}$ is not generic, so, by considering $I_a\subset \pi_1^{-1}(P)\cong \mathbb{T}$, the subset $Z_{R,a}=\{P\in Z'_{R,a}\::\:V(\Delta_{S,R,a})\cap I_a\text{ is infinite}\}$ also has dimension at most $\#S-1$. It follows that \begin{equation}
      Sol_k(P,[k-1])=\bigcup_{a\in[k-1]} \bigcup_{\emptyset\neq R\subset [a]}V(\Delta_{S,R,a})\cap I_a,
  \end{equation}
  and $\#\{R\cup\lambda\in Sol_k(P)\::\:\lambda\in I_a\}<\infty$ if and only if  $P\notin Z_{R,a}$, so $\#Sol_k(P,[k-1])<\infty$ if and only if $P\notin Z_{R,a}$ for all pairs $(a,R)$.
\end{proof}

Note that for $a=k-1$, since $I_a=(k-1,\infty)$, the set $Z_{R,k-1}\subset \mathbb{Z}_S$   contains $Y_R=V(\Delta_{S,R}(x_{M_1},\ldots,x_{M_s},z)|_{z=\infty})$, and this does not happen for $a\neq k-1$.

In particular, the same argument from Prop. \ref{prop:ud} works: if $\varphi\in HSol_k(P,[k-1])$, then there should exist $M_j\in S(\varphi)$ such that $Supp(M_j)\not\subset [0,\lfloor\lambda\rfloor]$, and we get that  $\lambda$ gets uniquely determined by the condition  
\begin{equation*}
ord_t(P(\varphi)\!)=a_{M_j}+\alpha(M_j,R)+d(M,R)\lambda,
\end{equation*}
since $d(M,R)\in\mathbb{Z}_{>0}$ by \eqref{eq:def_M_R_infty}.

If we are interested in  Laurent tropical solutions,  only the case $a=k-1$ has to be considered, and  $\{P\in \mathbb{Z}_S\::\:V(\Delta_{S,R})\cap I_{k-1}\text{ is infinite}\}$ gets reduced to $Y_R$, since it is only here that infinite amount of integer points can lie, thus we recover the set $Y(S)=\bigcap_{\emptyset\neq R\subset[k-1]}Y_{R}$  from \eqref{eq:deg_sols}.  Note that the sets $Z_{R,a}\subset (\mathbb{T}^*)^{\#S}$ appearing in $Y_H(S)$ are in general not tropical algebraic, contrary to the case of the $Y_R$ appearing in $Y(S)$, which are the tropical hypersurfaces $Y_R=V(\Delta_{S,R}(x_{M_1},\ldots,x_{M_s},z)|_{z=\infty})$.

In particular, since $\mu(P,[k-1])\subset HSol_k(P,[k-1])$, we get that $\#\mu(P,[k-1])<\infty$ if $P\notin Y_H(S)$.

For polynomials $P\notin Y_H(S)$, the decomposition $HSol_k(P)=HSol_k(P,[k-1])\sqcup HSol_k(P,[k-1]^c)$  satisfies 
    \begin{enumerate}
        \item  \begin{equation}
        \begin{aligned}
            HSol_k(P,[k-1]^c)&\xrightarrow[]{\sim} V(\Delta_S|_P(w)\!)\setminus\{-1,-2,\ldots,-k\}\\
            \lambda&\mapsto \lambda-k
        \end{aligned}
    \end{equation}
    \item $\varphi\in HSol_k(P,[k-1])$ iff $ \emptyset\neq R\subset\{0,\ldots,min\{k-1,\lfloor\lambda\rfloor\}\}$, with $\lambda\in\mathbb{R}_{\geq0}\setminus[k-1]$ varying in a finite set. 
    \end{enumerate}

Recall that if $S$ is homogeneous, then $Sing(S)=V(\Delta_S)$ for $\Delta_S$ given in \eqref{eq:disc_homogeneous}. See Corollary \ref{prop:hcase}. If $X_H(S):=Y_H(S)\cap V(\Delta_S(x_1,\ldots,x_n,w)|_{w=0})$, we can extend the concept of regular differential polynomial from Section \ref{sec:ordatmostkmo}.

\begin{definition}
    Let $S$ be homogeneous. We say that $P\in\mathbb{Z}_S$ is Hahn regular if $P\notin X_H(S)$.
\end{definition}

Again, \cite[Prop. 5]{GG26} extends to this context in the following sense.

\begin{proposition}
\label{prop:Shom}
    Let $S$ be homogeneous. $P\in\mathbb{Z}_S$ is Hahn regular  if and only if $\mu(HSol(P)\!)=\mu(P,[k-1])$ consists only of solutions $\varphi=R\cup\lambda$, with $ \emptyset\neq R\subset\{0,\ldots,min\{k-1,\lfloor\lambda\rfloor\}\}$ and $\lambda\in\mathbb{R}_{\geq0}\setminus\{0,\ldots,k-1\}$ varying in a finite set. 
\end{proposition}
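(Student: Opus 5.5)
The plan is to follow the proof of Corollary \ref{coro:coro1} and Theorem \ref{thm:crit_fin}, using the decomposition $HSol_k(P)=HSol_k(P,[k-1])\sqcup HSol_k(P,[k-1]^c)$ and handling each piece with the Hahn tools already available. I will also use $\mu(HSol(P)\!)\subset HSol_k(P)$ (Theorem \ref{prop:ot_basis_Hahn}), and that $Sol_k(P,[k-1]^c)$ coincides with its minimal part. The reason for the latter is that every element there is a single monomial $t^\lambda$, so it is minimal; hence $\mu(P,[k-1]^c)=HSol_k(P,[k-1]^c)$.

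\textbf{Step 1: the part $[k-1]^c$.} Since $S$ is homogeneous of degree $d$, Remark \ref{rem:coro} gives $\Delta_S|_P(w)=C+dw$ with $C=\min_j\{|M_j|_k+a_{M_j}\}$. This is a single Euclidean line, so $t^\lambda$ is a solution exactly when at least two monomials tie at $C$, independently of $\lambda$. By the bijection $\lambda\mapsto\lambda-k$ this means: if $P\notin V(\Delta_S)$ then $HSol_k(P,[k-1]^c)=\emptyset$, and otherwise every $\lambda\in\mathbb{R}\setminus\{0,\ldots,k-1\}$ gives a solution, so the set is infinite. This is the Hahn analogue of Corollary \ref{prop:hcase} and Remark \ref{rem:coro_2}; the same computation goes through because Lemma \ref{lemma:new_Hahn} reduces to the case $R=\emptyset$. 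Consequently $\mu(HSol(P)\!)=\mu(P,[k-1])$ if and only if $P\notin V(\Delta_S)$.

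\textbf{Step 2: the part $[k-1]$.} Proposition \ref{thm:H(P)_finite} says $HSol_k(P,[k-1])$ is finite if and only if $P\notin Y_H(S)$. Its elements all have the form $R\cup\lambda$ with $\emptyset\neq R\subset[0,\min\{k-1,\lfloor\lambda\rfloor\}]$, or are $R\subset[k-1]$ with $B=0$. I must rule out pure solutions $R\subset[k-1]$ from the stated description. For the direction "Hahn regular $\Rightarrow$ description", I would argue as in Remark \ref{prop:ud}: a solution $R$ with $B=0$ forces $P\in Y_R\subset Z_{R,k-1}$, since all $R\cup\{q\}$ with $q\gg0$ are then solutions. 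Moreover, $R$ being minimal is compatible with the finite description, because $\lambda$ ranges over a finite set. Finiteness of $\mu(P,[k-1])$ then follows from $\mu(P,[k-1])\subset HSol_k(P,[k-1])$.

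\textbf{Step 3: combining the steps.} If $P$ is Hahn regular, either $P\notin V(\Delta_S)$ or $P\notin Y_H(S)$. Here lies the main obstacle. Taken literally, $X_H(S)=Y_H(S)\cap V(\Delta_S)$ allows $P\in V(\Delta_S)$ while $P\notin Y_H(S)$. In that case Step 1 makes $\mu(P,[k-1]^c)$ infinite, so the claimed equality $\mu(HSol(P)\!)=\mu(P,[k-1])$ fails. I would first try to check whether the intended definition is a union, or whether homogeneity forces $V(\Delta_S)\subset Y_H(S)$. For the latter I would compare the constant terms $|M|_k$ with $|M|_{R,a}(z)$ at large $z$: for homogeneous $S$ and $R=\{k-1\}$, the evaluation $|M|_{R,k-1}(z)$ differs from $|M|_\emptyset(z)$ in a controlled way, and this might transfer a tie. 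If neither works, I would prove the statement with "Hahn regular" read as $P\notin Y_H(S)\cup V(\Delta_S)$, taking Steps 1 and 2 as the two independent conditions. The converse direction is then immediate: if the description holds, then $\mu(P,[k-1]^c)=\emptyset$ forces $P\notin V(\Delta_S)$ by Step 1, and the finiteness of the $\lambda$'s forces $P\notin Y_H(S)$ by Proposition \ref{thm:H(P)_finite}.
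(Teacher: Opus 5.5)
Your route is the paper's: its proof consists only of your Step~1 (via Remark~\ref{rem:coro_2}, $\mu(P,[k-1]^c)$ is empty off $V(\Delta_S)$ and consists of all $t^\lambda$, $\lambda\in\mathbb{R}\setminus\{0,\ldots,k-1\}$, on it), tacitly combined with Proposition~\ref{thm:H(P)_finite}. Your concern in Step~3 is correct, and the paper does not address it. With $X_H(S)=Y_H(S)\cap V(\Delta_S)$ taken literally the statement is false, and homogeneity does not force $V(\Delta_S)\subset Y_H(S)$.

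For a counterexample, take $k=1$, $S=\{e_0,e_1\}$ and $P=y_0+t\,y_1$. Then $\Delta_S(P)=\min\{1+0,\,0+1\}$ is a tie, so $P\in V(\Delta_S)$. The only pair is $(a,R)=(0,\{0\})$. Neither $ord_t(P(1))=\min\{0,\infty\}$ nor $ord_t(P(1+t^\lambda))=\min\{0,\lambda\}$ with $\lambda>0$ is a tie, so $HSol_k(P,[0])=\emptyset$ and $P\notin Y_H(S)$. Hence $P$ is Hahn regular. Yet $ord_t(P(t^\lambda))=\min\{\lambda,\,1+(\lambda-1)\}$ ties for every $\lambda\neq0$, so $\mu(HSol(P)\!)\neq\mu(P,[0])=\emptyset$.

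So discard your second alternative and prove the statement for $P\notin Y_H(S)\cup V(\Delta_S)$. Read $Y_H(S)$ as $\bigcup_{a,R}Z_{R,a}$ as well: the proof of Proposition~\ref{thm:H(P)_finite} really ends with ``$P\notin Z_{R,a}$ for all pairs''. The same $\bigcap$-for-$\bigcup$ slip occurs in $Y(S)$ and $X(S)$.

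With that reading, your Steps~1--2 give the forward direction. Delete the sentence saying a minimal pure $R$ is ``compatible with the finite description'': a pure minimal solution is exactly what the description excludes, and you have just shown none exists. In Step~1, Lemma~\ref{lemma:new_Hahn} is not the relevant tool, since it treats $ord_t\varphi\in[k-1]$. Simply use $\tfrac{d^j}{dt^j}t^\lambda=t^{\lambda-j}$ for $j\le k$ when $\lambda\notin\{0,\ldots,k-1\}$.

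In the converse one step is missing. Proposition~\ref{thm:H(P)_finite} concerns $HSol_k(P,[k-1])$, whereas your hypothesis only controls minimal solutions. Fill it as follows.
\begin{enumerate}
\item A pure elementary solution would contain a pure minimal one, which is excluded.
\item If $R\cup\lambda\in HSol_k(P,[k-1])$, any minimal $\psi\subseteq R\cup\lambda$ lies in $\mu(P,[k-1])$, because $\mu(P,[k-1]^c)=\emptyset$. It is not pure, so $\psi=R'\cup\lambda$ with the same $\lambda$.
\item Hence the $\lambda$'s occurring in $HSol_k(P,[k-1])$ are among the finitely many occurring in $\mu(P,[k-1])$, so $HSol_k(P,[k-1])$ is finite.
\end{enumerate}
Only now does Proposition~\ref{thm:H(P)_finite} yield $P\notin Y_H(S)$. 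The clause ``consists only of $R\cup\lambda$'' is indispensable here. A pure solution $R$ produces the infinitely many non-minimal elementary solutions $R\cup\{q\}$, $q\gg0$, of Remark~\ref{prop:ud}, while $\mu(P,[k-1])$ may remain finite.
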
 
\begin{proof}
    if $S$ is homogeneous of degree $d$, then  $S_{min }=S_{max} =\{d\}$. In particular 

\begin{equation*}
    \mu(P,[k-1]^c)=\begin{cases}
        \emptyset, &\text{ if }P\notin V(\Delta_S(x_{M_1},\ldots,x_{M_s},z)|_{z=0}),\\
        \mathbb{R}\setminus\{0,\ldots,k-1\},&\text{ otherwise},\\
    \end{cases}
\end{equation*}
by Remark \ref{rem:coro_2}.
\end{proof}

Thus, the new tropical Hahn phenomena at the level of minimal solutions for the homogeneous case is the appearance of solutions $ \emptyset\neq R\subset\{0,\ldots,min\{k-1,\lfloor\lambda\rfloor\}\}$, with $\lambda\in\mathbb{R}_{\geq0}\setminus[k-1]$.

We haven't studied yet the properties of $HSol_k(P,[k-1])^c\subset HSol_k(P)$ for the case in which $S$ is not homogeneous, we will do this in Sect. \ref{sec:ordatleastk}.

\subsection{Minimal solutions $ord(\varphi)\notin\{[ord(P)-1]\}$ revisited}
\label{sec:ordatleastk}
In this section we assume that $S$ is a support set which is not homogeneous (otherwise, see Prop. \ref{prop:Shom}). Let $P\in\mathbb{Z}_S$, recall that the set $\mu(P,[k-1]^c)\subset \mu(HSol(P)\!)$ can be described by the bend locus $V(\Delta_S|_P(w)\!)\setminus[k-1]\subset\mathbb{R}\setminus[k-1]$ of an expression $\Delta_S|_P(w):\mathbb{T}^*\xrightarrow[]{}\mathbb{T}^*$, with $\Delta_{S,\emptyset}|_P(w):=min_j\{|M_j|_k+a_{M_j}+deg(M_j)w\}$. Moreover, recall from Theorem \ref{thm:crit_fin_mero} that $\#\mu(LSol(P)\!)<\infty$ if and only if $P\notin V(\Delta_{S_{min},\emptyset}|_{w=0},\Delta_{S_{max},\emptyset}|_{w=0})$, where $S_{min}$ and $S_{max}$ denote the set of minimal and maximal degree monomials, respectively. 

In a similar fashion, we can try to characterize the locus $HSing(S)=\{P\in\mathbb{Z}_S\::\:|\mu(HSol(P)\!)|=\infty\}$. Let $deg(S)=\{d_1<\cdots<d_n\}$. If $\#deg(S)=2$, then  $HSing(S)=LSing(S)$, since $V(\Delta_S|_P(w)\!)\setminus\{[k-1]\}$ only exhibits  {\it asymptotic behavior} ($w>>0$ or $w<<0$).

    However, if $\#deg(S)>2$, then there may be  {\it whole intervals} $I\subset \mathbb{R}$ contained in $V(\Delta_S|_P(w)\!)$ coming from the bounded segments of the graph of the evaluation function $\Delta_S|_P(w):\mathbb{T}^*\xrightarrow[]{}\mathbb{T}^*$; these segments do not cause any problem with minimal Laurent solutions $\mu(P,[k-1]^c)\subset \mu(LSol(P)\!)$ since $I\cap\mathbb{Z}$ remains finite. This is no longer true for $\Gamma=\mathbb{Q}$ or $\Gamma=\mathbb{R}$. In this section we study the bend locus $V(\Delta_S|_P(w)\!)$  of $\Delta_S|_P(w):\mathbb{T}^*\xrightarrow[]{}\mathbb{T}^*$.

    What can we say about the finite set $V(\Delta_{S,\emptyset}|_P(z)\!)\cap\mathbb{Z}$? Can we bound its cardinality?

If $S$ is a support set, we denote by $[|deg(M)\::\:M\in S|]$ its degree multiset (or list, this is, a collection that admits repetitions). 

\begin{proposition}
Let $S$ be as usual. The set 
\begin{equation}
    \{\Delta_S|_P(w)\::\:P\in \mathbb{Z}_S\}
\end{equation}
does not depend on $S$, and depends only on the multiset 
$[|deg(S)|]=[|deg(M)\::\:M\in S|]$.
\end{proposition}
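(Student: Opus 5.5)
The set $\{\Delta_S|_P(w)\::\:P\in\mathbb{Z}_S\}$ consists of functions $\mathbb{T}^*\to\mathbb{T}$ given by a minimum of affine functions $w\mapsto (|M|_k+a_M)+deg(M)w$, one for each $M\in S$. The idea is to show that the constants $|M|_k+a_M$ can be prescribed independently and arbitrarily in $\mathbb{Z}$ as $P$ ranges over $\mathbb{Z}_S$. Once that holds, only the list of slopes $deg(M)$, with repetitions, can influence the resulting family of functions.

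\textbf{Step 1: reparametrize.} I would introduce the map
\[
\tau_S:\mathbb{Z}_S\to\mathbb{Z}^S,\qquad (a_M)_{M\in S}\mapsto (b_M)_{M\in S},\quad b_M:=|M|_k+a_M .
\]
Each $|M|_k=\sum_i (k-i)m_i$ is a fixed integer depending only on $M$. So $\tau_S$ is a translation of $\mathbb{Z}^S$, hence a bijection. In the new coordinates,
\[
\Delta_S|_P(w)=\min_{M\in S}\{b_M+deg(M)w\},
\]
and $\{\Delta_S|_P\::\:P\in\mathbb{Z}_S\}=\{w\mapsto \min_{M\in S}\{b_M+deg(M)w\}\::\:(b_M)\in\mathbb{Z}^S\}$.

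\textbf{Step 2: compare two support sets.} Let $S,S'$ be support sets with the same degree multiset. Choose a bijection $\sigma:S\to S'$ with $deg(\sigma(M))=deg(M)$; this exists exactly because the multisets agree. Given $(b_M)\in\mathbb{Z}^S$, put $b'_{\sigma(M)}:=b_M$. Then
\[
\min_{M'\in S'}\{b'_{M'}+deg(M')w\}=\min_{M\in S}\{b_M+deg(M)w\}.
\]
Since $\sigma$ induces a bijection $\mathbb{Z}^S\cong\mathbb{Z}^{S'}$, the two families coincide. This is the precise meaning of the claim that the set depends only on $[|deg(S)|]$ and not otherwise on $S$.

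\textbf{Step 3: the main subtlety.} The one point needing care is that the objects $\Delta_S|_P(w)$ must be treated as the \emph{expanded} tropical sums, as the paper stipulates after \eqref{eq:disc:specialization}, and not merely as functions. The reason is that $V(\Delta_S|_P(w))$ and the sets $S(q)$ record ties between distinct monomials of equal slope. Two monomials of the same degree define the same Euclidean ray exactly when their constants agree, and then they tie on all of $\mathbb{T}^*$; this is the situation in Proposition \ref{prop:nothcase}.

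With that reading, the bijection $\sigma$ of Step 2 carries each term of one expression to an identically written term of the other. The formal sums, and therefore their bend loci and sets $S(q)$, correspond exactly. Because the degrees are multiplicities of a multiset, repeated degrees are preserved by $\sigma$. The statement therefore holds both for functions and for formal expressions, which is what the later counting of $V(\Delta_S|_P(w))\cap\mathbb{Z}$ requires.
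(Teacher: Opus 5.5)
Your proposal is correct and follows the paper's proof. Both arguments rest on the fact that $a_M\mapsto |M|_k+a_M$ is a translation of $\mathbb{Z}^S$, so the family is exactly the set of expanded expressions $\min_{M\in S}\{\alpha_M+deg(M)w\}$ with arbitrary $\alpha_M\in\mathbb{Z}$; the paper packages this as a bijection $\mathbb{Z}_S\to\mathbb{TS}_{[|deg(S)|]}[w]$. Your degree-preserving bijection $S\to S'$ and your insistence on the expanded-sum reading only make explicit what the paper leaves implicit (read merely as functions, the family would depend only on the set $deg(S)$, not on the multiset).
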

\begin{proof}
    We have $\Delta_S|_P(w)=min_j\{|M_j|_k+a_{M_j}+deg(M_j)w\}$, with $|M_j|_k\in\mathbb{Z}$. We denote by $\mathbb{TS}_{[|deg(S)|]}[w]$ the set of all formal expressions
    \begin{equation}
        \sigma(w)= min_{M_j\in S}\{\alpha_{j}+deg(M_j)w\}, \quad\alpha_{j}\in\mathbb{Z}.
    \end{equation}
    
We have that the next assignment 
\begin{equation}
    \begin{aligned}
       \Delta_S|_{-}(w): \mathbb{Z}_S&\xrightarrow[]{}\mathbb{TS}_{[|deg(S)|]}[w]\\
        P&\mapsto \Delta_S|_P(w)
    \end{aligned}
\end{equation}
is bijective (this follows from the fact that $\overline{\mathbb{Z}}$ is a semifield ); we have $\alpha_j(P)=|M_j|_S+a_{M_j}$ for all $j=1,\ldots,s$, if $\alpha_j(P)=\alpha_j(Q)$ for $P,Q\in \mathbb{Z}_S$, then $|M_j|_S+a_{M_j}=|M_j|_S+b_{M_j}$ for all $j$, so $P=Q$. Now let $\sigma= min_j\{\alpha_{j}+deg(M_j)w\}$, we see that $P=(a_{M_j})=(\alpha_j(P)-|M_j|_S)$ satisfies $\Delta_S|_{P}(w)=\sigma$.
\end{proof}

By our hypotheses on support sets (see Definition \ref{def:supsetk}), we have $deg(M)>0$ for all $M\in S$, so we can codify the multisets $[|deg(M)\::\:M\in S|]$ into integer partitions: sequences $\lambda=(\lambda_s\geq\cdots \geq\lambda_1)$  of $s>0$ positive integers, which are said to be a partition of the integer $\sum_i\lambda_i$.

\begin{definition}
    A tropical partition-sum is a formal expression 
    \begin{equation}
        \sigma(w)= \bigoplus_{i=1}^s a_i\odot w^{\odot \lambda_i},\quad a_i\in\mathbb{Z},
    \end{equation}
    where $\lambda=(\lambda_s\geq\cdots \geq\lambda_1)$ is a partition and  $w$ is a variable.
 We denote by $\mathbb{TPS}[w]$ the set of all tropical partition-sums.
    \end{definition}

We see tropical partition-sums as generalizations of tropical polynomials, so we can consider the induced evaluation function $ev_\sigma:\mathbb{T}^*\xrightarrow[]{}\mathbb{T}^*$  given by $ev_\sigma(q)=min_{i=1,\ldots,s}\{a_i+\lambda_i q\}$, and by $V(\sigma)=\{q\in \mathbb{T}^*\::\:ev_\sigma(q)\text{``} = \text{''}0_{\mathbb{T}}\}$ its bend locus. 

\begin{remark}
Given a partition $\lambda=(\lambda_s\geq\cdots \geq\lambda_1)$, we denote by $(\mathbb{T}^*)^s\cong\mathbb{TPS}_\lambda[x]\subset \mathbb{TPS}[x]$ the set of all tropical partition-sums having support $\lambda$. 

In principle, if $[|deg(S)|]=\lambda$, then we can use this set to study the behavior of the sets $\mu(P,[k-1]^c)\subset HSol(P)$ (and also $\mu(P,[k-1]^c)\subset LSol(P)$) for $P\in\mathbb{Z}_S$. However, there is ambiguity in this procedure since  $k=\max\{ord(M)\::\:M\in S\}$ can't be recovered from $\lambda$.
\end{remark}

\begin{convention}
    From now on we leave this ambiguity behind, and focus ourselves on the study of the properties of the tropical partition-sums; we will now use the variable $x$ instead of $w$.
\end{convention}

    \begin{example}We  give some extreme examples of objects $\sigma(x)= \bigoplus_{i=1}^s a_i\odot x^{\odot \lambda_i}$.
    \begin{enumerate}
        \item (a monomial) if $s=1$, then $\sigma(x)=a\odot x^{d}$ is a single monomial, so $V(\sigma)=\{\infty\}$. For this reason, we frequently suppose that $s>1$
        \item (homogeneous) if $s>1$, but $\lambda_i=\lambda_j=d$ for all $i,j$, then $\sigma(x)=\bigoplus_ia_i\odot x^{d}$ satisfies 
        \begin{equation*}
            V(\sigma)=\begin{cases}
                \{\infty\},&\text{ if }\bigoplus_{i=1}^sa_i\text{ does not vanish tropically},\\
                \mathbb{T},&\text{otherwise}.
            \end{cases}
        \end{equation*}
        for this reason, we  frequently suppose that $s>1$ and $\lambda_s>\lambda_1$.
        \item (all different) if $s>1$ and $\lambda=(\lambda_s>\cdots >\lambda_1)$, then $\sigma(x)$ is just a tropical polynomial with coefficients in $\overline{\mathbb{Z}}$ and $Supp(f)=\lambda$. In particular $V(\sigma)$ is finite.
    \end{enumerate}
\end{example}

The previous examples motivate the next definitions. If $\lambda=(\lambda_s\geq\cdots \geq\lambda_1)$ is a partition, we denote by $deg(\lambda)$ its associated set.

\begin{definition}
    Let $\sigma=\bigoplus_i a_i\odot x^{\lambda_i}$ be a tropical partition-sum. We define its polynomialization $\Pi(\sigma)\in\overline{\mathbb{Z}}[x]$ as \begin{equation}
       \Pi(\sigma)(x):=\bigoplus_{d\in deg(\lambda)}[\bigoplus_{\{\lambda_i=d\}}a_i]\odot x^{d},  
    \end{equation}
this is, $\Pi(\sigma)(x):=\bigoplus_{j} b_j\odot x^{j}$, where $b_j=\bigoplus_{\{\lambda_i=j\}}a_i.$
\end{definition}

Clearly the image of the map $\Pi:\mathbb{TPS}[x]\xrightarrow[]{}\overline{\mathbb{Z}}[x]$ is the set of nonzero polynomials with no constant term, this is,  $\langle x\rangle\setminus\{0\}$.

\begin{definition}
    We say that $\sigma=\bigoplus_i a_i\odot x^{\lambda_i}$ is of polynomial type if $V(\sigma)=V(\Pi(\sigma)\!)$, equivalently, if $V(\sigma)$ is finite.
\end{definition}

It is hard to describe algebraically the locus of tropical partition-sums which are of polynomial type. We have the following result. 
\begin{proposition}
    Let $\sigma=\bigoplus_i a_i\odot x^{\lambda_i}$ be a tropical partition-sum, and let $\Pi(\sigma)(x)=\bigoplus_j b_j\odot x^{d_j}$. If $b_j=\bigoplus_{\{\lambda_i=d_j\}}a_i$ does not vanish tropically for all $d_j\in deg(\lambda)$, then $V(\sigma)=V(\Pi(\sigma)\!)$.
\end{proposition}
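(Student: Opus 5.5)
The plan is to compare the two evaluation functions directly and then compare the two bend loci pointwise on $\mathbb{T}^*$. Since $\min$ is associative and commutative, grouping the terms of $\sigma$ by degree gives
\[
ev_\sigma(q)=\min_{d_j\in deg(\lambda)}\Bigl\{\min_{\{\lambda_i=d_j\}}a_i+d_jq\Bigr\}=\min_j\{b_j+d_jq\}=ev_{\Pi(\sigma)}(q)
\]
for every $q\in\mathbb{T}^*$. So the two functions coincide, and only the sets of indices where the minimum is attained can differ. We have $V(\sigma)\supseteq V(\Pi(\sigma))$ always: if two distinct degrees $d_j\neq d_{j'}$ attain the minimum for $\Pi(\sigma)$, choose $i,i'$ with $\lambda_i=d_j$, $a_i=b_j$ and $\lambda_{i'}=d_{j'}$, $a_{i'}=b_{j'}$. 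These are two distinct terms of $\sigma$ attaining the minimum. (The point $\infty$ is handled by the convention already used in the examples, and $ev$ is finite on $\mathbb{T}^*$.)

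For the reverse inclusion, take $q\in V(\sigma)$, so that two distinct indices $i\neq i'$ satisfy $a_i+\lambda_iq=a_{i'}+\lambda_{i'}q=ev_\sigma(q)$. There are two cases.

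\emph{Case $\lambda_i\neq\lambda_{i'}$.} Minimality of $ev_\sigma(q)$ forces $a_i=b_j$ for $d_j=\lambda_i$, since otherwise a smaller coefficient of the same degree would give a smaller value. The same holds for $i'$. Hence two distinct degrees attain the minimum of $\Pi(\sigma)$ at $q$, and $q\in V(\Pi(\sigma))$.

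\emph{Case $\lambda_i=\lambda_{i'}=d_j$.} Then $a_i=a_{i'}$, and by minimality both equal $b_j=\min_{\{\lambda_l=d_j\}}a_l$. The minimum defining $b_j$ is therefore attained twice, so $b_j$ vanishes tropically. This contradicts the hypothesis, so this case cannot occur.

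The only delicate step is this second case. Its content is exactly the hypothesis, which rules out ties inside a single degree class, so I expect no real obstacle. Putting the two inclusions together gives $V(\sigma)=V(\Pi(\sigma))$. As a consequence, $\sigma$ is of polynomial type, and $V(\sigma)\cap\mathbb{T}^*$ is the finite set of corner points of the tropical polynomial $\Pi(\sigma)$.
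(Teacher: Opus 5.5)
Your proof is correct, and its key observation is the one the paper also relies on: $ev_\sigma=ev_{\Pi(\sigma)}$, so $V(\sigma)$ can only exceed $V(\Pi(\sigma))$ through ties between two terms of the same degree.

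The execution differs, however. The paper works with the graph of $ev_Q$, where $Q$ keeps only the relevant degrees of $\Pi(\sigma)$. It cuts this graph into linear pieces between consecutive roots and uses the hypothesis to assert that none of these pieces lies in $V(\sigma)$. You instead work pointwise with the set of minimizing indices, splitting on whether two tying terms share a degree. This is shorter and fully rigorous.

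Your version also brings out two things. First, the inclusion $V(\Pi(\sigma))\subseteq V(\sigma)$ needs no hypothesis. Second, a point of $V(\sigma)\setminus V(\Pi(\sigma))$ must be one where a single degree $d_j$ attains the minimum of $\Pi(\sigma)$ while $b_j$ vanishes tropically. Hence only relevant degrees matter, which is the refinement behind the paper's later procedure of checking only the relevant slopes. The paper's graph picture, in turn, explains geometrically why such a failure produces whole intervals inside $V(\sigma)$, which is what motivates the notion of polynomial type.

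One small point: $\infty$ belongs to both sets or to neither simply because every $\lambda_i\geq 1$, so $ev_\sigma(\infty)=ev_{\Pi(\sigma)}(\infty)=\infty$. This is a cleaner justification than appealing to the convention used in the examples.
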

\begin{proof}
We write $\Pi(\sigma)(x)=b_1\odot x\oplus\cdots \oplus b_d\odot x^{\odot d}$ with $b_i\in\overline{\mathbb{Z}}$. Since $\Pi(\sigma)$, is a polynomial of degree $d\geq1$ with no constant term, the bend locus $V(\Pi(\sigma)\!)\subset\mathbb{T}$ consists of $d$ roots  in $\mathbb{T}$ (counting multiplicities), say $\{-\infty<x_1\leq\cdots\leq x_d=\infty\}$.

We know that the bend locus $V(\Pi(\sigma)\!)\subset\mathbb{T}$ depends only on the class $[ev_{\Pi(\sigma)}]\in\mathcal{O}_{\mathbb{T}}(\mathbb{T})$ of its evaluation function $ev_{\Pi(\sigma)}$, and this class is represented by another tropical polynomial $Q(x)=b_1'\odot x\oplus\cdots \oplus b_d'\odot x^{\odot d}$ whose support is the set of relevant degrees of $\Pi(\sigma)$, so $Supp(Q)\subset Supp(\Pi(x)\!)$ (note that the smallest and the biggest degrees are always relevant).

We set $x_0=-\infty$, and consider the graph $\Gamma\subset \mathbb{T}^2$ of   $ev_{Q}$, which is made out of line segments over the interval $[x_i,x_{i+1}]\subset\mathbb{R}$ for  $i=0,\ldots,d-1$ (which may be reduced to a single point), and it is described by one of the monomials $b_{j(i)}\odot x^{d_{j(i)}}$ of $f_\sigma(x)$. If $i=0$, we have that $d_{j(i)}= deg(f_\sigma)$, and if $i=d-1$, we have that $d_{j(i)}=min deg(f_\sigma)$, and if $f_\sigma$ has no more monomials, we recover our previous condition. 

If $f_\sigma(x)$ has at least three monomials, then the graph $\Gamma$ may or may not contain  inner line segments, and if it does, it is no longer clear which monomial describes it ($f_\sigma(x)$ may have be  irrelevant monomials). But if none of the expressions $b_j=\bigoplus_{\{\lambda_i=d_j\}}a_i$ vanish tropically, then we can ensure that none of the line segments of $\Gamma$ belongs to $V(\sigma)$, since  both $\sigma$ and $f_\sigma$ define the same evaluation function, so the graph of $ev_{\sigma}$ depends only on the relevant monomials of $f_\sigma$, see Definition \ref{def:rel_mon}.
\end{proof}

In a similar fashion, if $S$ is not homogeneous, we say that $P\in\mathbb{Z}_S$ is Hahn regular if $\#\mu(P,[k-1]^c)<\infty$ and $P\notin Y_H(S)$. We will finish this section by describing a closed set $Z\supset HSing(S)$.

\begin{corollary}
    Let $S$ be not homogeneous. Let \begin{equation}
        \Omega(S):=V(\Delta_{S_{d},\emptyset}|_{w=0},\:d\in deg(S)\!)\subset\mathbb{T}^s,
    \end{equation}
    where $S_d=\{M\in S\::\:deg(M)=d\}$.   If $P\notin\Omega$, then $\#\mu(P,[k-1]^c)<\infty$.
 
\end{corollary}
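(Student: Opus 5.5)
The plan is to argue by contraposition, using the bijection from Propositions \ref{prop:poly} and \ref{prop_neg_ord_sols}, extended to the Hahn setting. It identifies $\mu(P,[k-1]^c)$ with the bend locus $V(\Delta_S|_P(w)\!)$, up to removing finitely many points and shifting by $k$. It is therefore enough to show that if $V(\Delta_S|_P(w)\!)$ is infinite, then $P$ lies in the excluded locus.

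First I rewrite $\Delta_S|_P(w)=\min_j\{|M_j|_k+a_{M_j}+deg(M_j)w\}$ by grouping monomials by degree. For each $d\in deg(S)$ set
\begin{equation*}
\beta_d:=\min_{M\in S_d}\{|M|_k+a_M\}=ev_{\Delta_{S_d,\emptyset}}(P_d,0),
\end{equation*}
where $P_d$ is the coordinate projection of $P$ to $\mathbb{Z}_{S_d}$. Then $\Delta_S|_P(w)=\min_d\{\beta_d+dw\}$, so the evaluation function agrees with that of the polynomialization $\Pi(\Delta_S|_P)$. This is a genuine tropical polynomial with distinct exponents, and its graph is piecewise linear with finitely many break points.

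Next, take $w\in V(\Delta_S|_P(w)\!)$. Either the minimum is attained by two monomials of different degrees, so that $w$ is one of the finitely many break points of $ev_{\Pi(\Delta_S|_P)}$; or it is attained by two monomials of the same degree $d$. In the latter case $|M|_k+a_M+dw=|M'|_k+a_{M'}+dw$ for two distinct $M,M'\in S_d$ realizing the minimum at $w$. Since the term $dw$ cancels, this says exactly that $\beta_d$ vanishes tropically, i.e.\ $P_d\in V(\Delta_{S_d,\emptyset}|_{w=0})$. The tie then persists on the whole (possibly unbounded) segment where degree $d$ is the minimizing degree, which is how intervals and rays appear, as in Propositions \ref{prop:nothcase} and \ref{prop:nothcase_laurent}. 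Conversely, if no $\beta_d$ vanishes tropically, the previous proposition (polynomial type) gives $V(\Delta_S|_P)=V(\Pi(\Delta_S|_P))$, which is finite. Hence $\#\mu(P,[k-1]^c)<\infty$.

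The main obstacle is matching the hypothesis as written. The argument produces finiteness under the condition that $P_d\notin V(\Delta_{S_d,\emptyset}|_{w=0})$ for every $d$. That is, $P$ must avoid the union $\bigcup_d V(\Delta_{S_d,\emptyset}|_{w=0})\times\mathbb{T}^{\#(S\setminus S_d)}$, not merely the intersection. With the intersection reading, a tie in a single degree $d$ that is minimizing along a bounded segment, or at the ends as in Propositions \ref{prop:nothcase} and \ref{prop:nothcase_laurent}, still yields infinitely many Hahn points. So I would state the proof for $\Omega(S)$ interpreted as the set of $P$ for which some $\beta_d$ vanishes tropically. Only a relevant degree $d$, one that actually attains the minimum on a segment of positive length, needs to be checked, which slightly refines the condition.
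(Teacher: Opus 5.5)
Your argument is correct and is essentially the route the paper intends. The paper states this corollary without proof, as a consequence of the preceding polynomial-type proposition: if no $\beta_d=\min_{M\in S_d}\{|M|_k+a_M\}$ vanishes tropically, then $V(\Delta_S|_P)=V(\Pi(\Delta_S|_P))$, which is finite. This is then transported to $\mu(P,[k-1]^c)$ through the bijection $\lambda\mapsto\lambda-k$, exactly as you do.

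Your objection to the hypothesis is also right. With the paper's convention $V(f_1,\ldots,f_m)=\bigcap_i V(f_i)$, the statement as written is false. Take $k=1$ and $P=y_0+t\,y_1+y_1^2$. Since $S_2$ is a singleton, $P\notin\Omega(S)$, yet $t^\lambda$ is a solution for every $\lambda>2$. So $\Omega(S)$ must be read as the union $\bigcup_d V(\Delta_{S_d,\emptyset}|_{w=0})\times\mathbb{T}^{\#(S\setminus S_d)}$, which is the version your argument proves.
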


 Given $S$ and  $P\in\mathbb{Z}_S$, we follow the next steps to compute $\mu(P,[k-1]^c)$:
\begin{enumerate}
    \item compute $\sigma(P):=\Delta_S|_P(w)=\bigoplus_ja_j\odot w^{\odot \lambda_j}$,
    \item compute the set of relevant slopes (monomials) of the polynomialization $f_{\sigma(P)}=\bigoplus_jb_j\odot w^{\odot d_j}$, i.e., the slopes appearing on the graph of the evaluation function $ev_{f_{\sigma(P)}}=ev_\sigma$ (the smallest and biggest slopes of $S$ are always relevant),
    \item for every relevant slope $d_k$ ,  check  wether or not $b_k=\bigoplus_{\{\lambda_i=d_k\}}a_i$ vanishes. 
\end{enumerate}

From this procedure, it looks hard to determine $HSing(S)$ explicitly for a fixed $S$. An alternative condition was introduced in \cite{GG26}, namely that of each factor $a_i$ appearing in $b_k=\bigoplus_{\{\lambda_i=d_k\}}a_i$ for all $k$ to be different. This is an open condition called non weakly tropical vanishing.

\begin{remark}
For a support set $S$, $a\in[k-1]$ and $\emptyset\neq R\subset[a]$ fixed, one can also consider a specialization    for $P=(a_M)\in\mathbb{Z}_S$: $$\Delta_{S,R,a}|_P(z)=\bigoplus_{M\in S}|M|_{R,a}(z)\odot a_M=min_{M}\{|M_{\leq a}|_R(\infty)+a_M+|M_{\geq a+1}|_{\emptyset}(z)\}.$$

The only differences with the case $R=\emptyset$ discussed here are that $\Delta_{S,R,a}|_P(z)$ may have constant (degree 0) monomials, in fact, $|M|_{R,a}(z)\odot a_M$ is constant if and only if $M=M_{\leq a}$. Also, we have to look for solutions  $V(\Delta_{S,R,a}|_P(z)\!)\subset I_a$.
\end{remark}

\subsection{The set of $t$-adic orders of formal solutions of an \textsc{oade} at the origin} 
\label{sect:appPPord}

We now consider applications of the previous set of ideas towards the study of  the set $S(\mathfrak{P},0):=ord_t(HSol(\mathfrak{P})\!)\subset \mathbb{R}$ from  \eqref{eq:OCDE_tadicord}, where $\mathfrak{P}\in\mathbb{C}[[t]]\{y\}$ is an \textsc{oade}. We start by recalling the following important result, which yields the concrete connection between formal power series solutions of \textsc{oade}s, and tropical solutions of their tropicalization (for completeness, we give a proof in the context of formal Hahn solutions).

\begin{proposition}
\label{prop:trop_Hahn}
Let $\mathfrak{P}= \sum_{M_i\in S} {\mathcal A}_{M_i} E_{M_i}$, and  $HSol(\mathfrak{P})=\{\Phi\in \mathbb{C}[\![t^{\mathbb{R}}]\!]\::\:\mathfrak{P}(\Phi)=0\}$. If $P=trop(\mathfrak{P})$, then we have a map 
    \begin{equation}
        Supp:HSol(\mathfrak{P})\xrightarrow[]{}HSol(P).
    \end{equation}
\end{proposition}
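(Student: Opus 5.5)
The plan is the standard ``fundamental theorem'' argument for tropical differential equations, adapted to Hahn series. Fix $\Phi\in HSol(\mathfrak{P})$, set $\varphi=Supp(\Phi)\in\mathbb{B}[\![t^{\mathbb{R}}]\!]$, and show that $ord_t(P(\varphi))$ vanishes tropically in the sense of Remark \ref{rem:notation_trop_van}. The key ingredient is a compatibility between $t$-adic orders of differential monomials and the support map:
\begin{equation*}
ord_t(E_M(\Phi)) \geq ord_t(E_M(\varphi)) \quad\text{for every } M,
\end{equation*}
together with
\begin{equation*}
ord_t(\mathcal{A}_{M}E_M(\Phi)) = a_M + ord_t(E_M(\Phi)).
\end{equation*}
The latter is just multiplicativity of $ord_t$ on the field $\mathbb{C}[\![t^{\mathbb{R}}]\!]$.

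First I will check the derivative step. For $\Phi=\sum_{m}c_mt^m$, the derivative $\tfrac{d^j}{dt^j}\Phi=\sum_m m(m-1)\cdots(m-j+1)c_mt^{m-j}$ has support contained in $\{m-j : m\in\varphi,\ m\notin\{0,\ldots,j-1\}\}$. This is exactly $Supp(\tfrac{d^j\varphi}{dt^j})$ by the tropical derivative on $\mathbb{B}[\![t^{\mathbb{R}}]\!]$. Since $\operatorname{char}\mathbb{C}=0$, the falling factorial vanishes only for $m\in\{0,\ldots,j-1\}$, so the inclusion is in fact an equality:
\begin{equation*}
Supp(\Phi^{(j)})=Supp(\varphi^{(j)}).
\end{equation*}
Hence $ord_t(\Phi^{(j)})=ord_t(\varphi^{(j)})$ for each $j$. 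For products, $ord_t$ is additive both on $\mathbb{C}[\![t^{\mathbb{R}}]\!]$ (a domain) and on $\mathbb{B}[\![t^{\mathbb{R}}]\!]$ (Minkowski sum of minima). Therefore $ord_t(E_M(\Phi))=ord_t(E_M(\varphi))$ exactly, including the value $\infty$.

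Now write $\mathfrak{P}(\Phi)=\sum_i \mathcal{A}_{M_i}E_{M_i}(\Phi)=0$, and let
\begin{equation*}
v=\min_i\{a_{M_i}+ord_t(E_{M_i}(\varphi))\}=ord_t(P(\varphi)).
\end{equation*}
If $v=\infty$ we are in case (1) of Remark \ref{rem:notation_trop_van}. Otherwise, the coefficient of $t^v$ in $\mathfrak{P}(\Phi)$ is the sum of the leading coefficients $\mathcal{A}_{i0}\cdot\mathrm{lc}(E_{M_i}(\Phi))$ over those $i$ attaining the minimum. Each such leading coefficient is nonzero, since $\mathcal{A}_{i0}\neq0$ and $\mathbb{C}[\![t^{\mathbb{R}}]\!]$ has no zero divisors. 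If only one index attained the minimum, the coefficient of $t^v$ would be nonzero, contradicting $\mathfrak{P}(\Phi)=0$. So at least two indices attain $v$, which is tropical vanishing and gives $\varphi\in HSol(P)$.

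The only delicate point is the derivative/support equality at non-integer exponents. The tropical derivative on $\mathbb{B}[\![t^{\mathbb{R}}]\!]$ must kill exactly the exponents in $\{0,\ldots,j-1\}$ after $j$ derivations and nothing else. I will verify this against the convention implicit in Lemma \ref{lemma:new_Hahn}, where $ord_t(\varphi^{(j)})=\lambda-j$ for non-integer $\lambda$. The convention about Convention \ref{conv:shape_of_P} (excluding $\infty$) is handled separately by Prop. \ref{prop:equaltozerosols}.
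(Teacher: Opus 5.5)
Your proposal is correct and follows essentially the paper's argument. Any initial terms of the $\mathcal{A}_{M_i}E_{M_i}(\Phi)$ that attain $\min_i\{a_{M_i}+ord_t(E_{M_i}(\Phi))\}$ must cancel, so this minimum is attained at least twice, and the equality $ord_t(E_M(\Phi))=ord_t(E_M(Supp(\Phi)))$ transfers this to $P=trop(\mathfrak{P})$. Your falling-factorial computation (only exponents in $\{0,\ldots,j-1\}$ are killed by $j$ derivations, consistent with \eqref{eq:gen_comp}) justifies the derivative step that the paper only asserts, while the degenerate case $v=\infty$ (trivial polynomial solutions, which Convention \ref{conv:shape_of_P} excludes from $HSol(P)$) is glossed over in the paper's proof just as in yours.
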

\begin{proof}
The proof is an easy consequence of the definitions. Let $\Phi\in \mathbb{C}[\![t^{\mathbb{R}}]\!]$, we write $i.t.(\Phi)=a_{ord_t(\Phi)}t^{ord_t(\Phi)}$ the initial term of $\Phi$, with $a_{ord_t(\Phi)}\in\mathbb{C}^*$, so we can write $\Phi=i.t.(\Phi)+\widetilde{\Phi}$, with $ord_t(\widetilde{\Phi})>ord_t(\Phi)$.

Let $\mathfrak{P}= \sum_{M_i\in S} {\mathcal A}_{M_i} E_{M_i}$.
We start with the classical initial (indicial) equation $in_{\Phi}(\mathfrak{P})$. For every monomial ${\mathcal A}_{M_i} E_{M_i}$ of $\mathfrak{P}$, we have $ord_t(\mathcal{A}_{M_i} E_{M_i}(\Phi)\!)=a_{M_i}+ord_t(E_{M_i}(\Phi)\!)$. Thus, the quantity $trop_{\Phi}(\mathfrak{P}):=min_{M_i}\{a_{M_i}+ord_t(E_M(\Phi)\!)\}$ gives us the minimal $t$-adic order of the evaluations $\{\mathcal{A}_{M_i} E_{M_i}(\Phi)=\sum_{j\geq j_0(i)}a_{i,j}t^j\}_i$. In the sum 

$$\mathfrak{P}(\Phi)=\sum_{M_i\in S}\sum_{j\geq j_0(i)}a_{i,j}t^j=\alpha t^{trop_{\Phi}(\mathfrak{P})}+\widetilde{\mathfrak{P}(\Phi)}$$

the only terms that can contribute to the initial term $\alpha t^{trop_{\Phi}(\mathfrak{P})}$ are the initial terms $i.t.(\mathcal{A}_{M_i} E_{M_i}(\Phi)\!)$ of the monomials $\mathcal{A}_{M_i} E_{M_i}$ having the right valuation $trop_{\Phi}(\mathfrak{P})$, thus 

$$\alpha t^{trop_{\Phi}(\mathfrak{P})}=\sum_{M_i\::\:trop_{\Phi}(\mathfrak{P})=a_{M_i}+ord_t(E_M(\Phi)\!)}i.t.(\mathcal{A}_{M_i} E_{M_i}(\Phi)\!)$$

If $\mathfrak{P}(\Phi)=0$, then $\alpha t^{\trop_\Phi(P)}=0$, so there must exist at least two different monomials $M_j,M_k$ such that $$trop_{\Phi}(\mathfrak{P})=min_{M_i}\{a_{M_i}+ord_t(E_M(\Phi)\!)\}=a_{Mji}+ord_t(E_{M_j}(\Phi)\!)=a_{M_k}+ord_t(E_{M_k}(\Phi)\!)$$

To finish, recall that the map $ord_t:\mathbb{C}[\![t^{\mathbb{R}}]\!]\xrightarrow[]{}\mathbb{T}$ factors through the support map $Supp:\mathbb{C}[\![t^{\mathbb{R}}]\!]\xrightarrow[]{}\mathbb{B}[\![t^{\mathbb{R}}]\!]$ through a map $ord_t:\mathbb{B}[\![t^{\mathbb{R}}]\!]\xrightarrow[]{}\mathbb{T}$ (see \eqref{diag:relevant}). 

    This means that if $\Phi\in \mathbb{C}[\![t^{\mathbb{R}}]\!]$ and $\phi=Supp(\Phi)$, then $$ord_t(\tfrac{d^i\Phi}{dt^i})=ord_t(\tfrac{d^iSupp(\Phi)}{dt^i})=ord_t(\tfrac{d^i\phi}{dt^i}),$$
    and this extends to $ord_t(E_{M}(\Phi)=ord_t(E_{M}(\phi)$ for all $M$. Since $P=trop(\mathfrak{P})$, we have that $trop_\phi(P)\text{``} = \text{''}0_{\mathbb{T}}$, this is, this implies $\phi=Supp(\Phi)\in HSol(P)$.
\end{proof}

 If $P=trop(\mathfrak{P})$, the map $Supp:HSol(\mathfrak{P})\xrightarrow[]{}HSol(P)$ from Prop. \ref{prop:trop_Hahn} is not surjective, this is, there may be tropical solutions $\varphi\in Sol(P)$ which do not lift to a classical solution $\Phi\in Sol(\mathfrak{P})$. 

We are now ready to give our main result, which says that the set $S(\mathfrak{P},0)$ can be computed from the elementary Hahn $k$-solutions of the tropicalization $P=trop(\mathfrak{P})$.

    \begin{theorem}
    \label{thm:app_tropical_Painleve}
        If $\mathfrak{P}$ is an \textsc{oade} and $P=trop(\mathfrak{P})$, then $S(\mathfrak{P},0)\subset ord_t(HSol_k(P))$.
    \end{theorem}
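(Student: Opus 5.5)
The plan is to compose three maps already built in the paper and to track the $t$-adic order along them. Let $\lambda\in S(\mathfrak{P},0)$. Then there is some $\Phi\in HSol(\mathfrak{P})$ with $ord_t(\Phi)=\lambda$. The aim is to produce an elementary Hahn $k$-solution of $P=trop(\mathfrak{P})$ whose $t$-adic order is also $\lambda$.

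First, apply Proposition \ref{prop:trop_Hahn} to get $\phi:=Supp(\Phi)\in HSol(P)$. By the commutative diagram \eqref{eq:ord_comm_with_supp}, extended to $\Gamma=\mathbb{R}$ as in \eqref{diag:relevant}, we have $ord_t(\phi)=ord_t(\Phi)=\lambda$.

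One point needs care. Convention \ref{conv:shape_of_P} restricts $HSol(P)$ to solutions with $ord_t(P(\varphi))\text{``}=\text{''}0_{\mathbb{T}}$, that is, with a finite tropical minimum attained twice. The case $\mathfrak{P}(\Phi)=0$ because every $E_{M_i}(\Phi)$ vanishes identically must be handled separately. By Prop. \ref{prop:equaltozerosols}, extended verbatim to Hahn series, this forces $\Phi$ to be a polynomial of degree at most $\min ord(P)-1\le k-1$. Then $\phi\in\mathcal{P}([k-1])$, so $\pi_k(\phi)=\phi$ and the order is trivially preserved. I would either note this case explicitly, or observe that the theorem implicitly uses the convention $\Phi\neq0$ with nonzero evaluations. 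In every other case $\phi$ is a genuine element of $HSol(P)$.

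Second, apply the map $\pi_k:HSol(P)\to HSol_k(P)$ from the proof of Theorem \ref{prop:ot_basis_Hahn}. This is the step that carries the content. We must check that $\pi_k(\phi)$ is again a solution and that $ord_t(\pi_k(\phi))=ord_t(\phi)$. The first claim holds because Lemma \ref{lemma:new_Hahn} gives $ord_t(E_M(\phi))=ord_t(E_M(\pi_k(\phi)))$ for all $M\in\mathbb{N}^{[k]}$. Hence $ord_t(P(\phi))$ and $ord_t(P(\pi_k(\phi)))$ are the same tropical sum, and one vanishes tropically if and only if the other does.

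For the order, I would go through the three cases of the Hahn definition of $\pi_k$:
\begin{itemize}
\item if $ord_t\phi\notin[k-1]$, then $\pi_k(\phi)=t^{ord_t\phi}$, which has the same order;
\item if $\phi\in\mathcal{P}([k-1])$, then $\pi_k(\phi)=\phi$;
\item otherwise, $\pi_k(\phi)=\big(\phi_{\mathbb{N}}\cap[0,\min\{k-1,\lfloor ord_t\phi_{\mathbb{R}\setminus\mathbb{N}}\rfloor\}]\big)+t^{ord_t\phi_{\mathbb{R}\setminus\mathbb{N}}}$.
\end{itemize}
The third case is the only one with content. Here $ord_t\phi=ord_t\phi_{\mathbb{N}}=r_1\in[k-1]$ and $r_1<ord_t\phi_{\mathbb{R}\setminus\mathbb{N}}$, so $r_1\le\min\{k-1,\lfloor ord_t\phi_{\mathbb{R}\setminus\mathbb{N}}\rfloor\}$. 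Thus $r_1$ survives the truncation, and the minimum of $\pi_k(\phi)$ is still $r_1$. This is the analogue of Lemma \ref{lemma_computations}(3).

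Combining the two steps gives $\lambda=ord_t(\pi_k(Supp(\Phi)))\in ord_t(HSol_k(P))$, which is the claimed inclusion. I expect the main obstacle to be bookkeeping rather than substance. It consists of verifying the order-preservation in the third case of the Hahn $\pi_k$, especially when $\lfloor ord_t\phi_{\mathbb{R}\setminus\mathbb{N}}\rfloor<k-1$, and of cleanly dispatching the degenerate identically-vanishing case excluded by Convention \ref{conv:shape_of_P}.
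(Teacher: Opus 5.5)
Your route is the paper's own: compose $Supp$ from Proposition~\ref{prop:trop_Hahn} with $\pi_k$ and track $ord_t$. However, two steps do not go through as written.

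\textbf{The degenerate case.} You are right that under Convention~\ref{conv:shape_of_P}, $Supp(\Phi)$ need not lie in $HSol(P)$ when every $E_{M_i}(\Phi)$ vanishes. The proof of Proposition~\ref{prop:trop_Hahn} silently assumes the minimum is finite. But your first remedy, ``$\pi_k(\phi)=\phi$, so the order is preserved'', does not help. The series $\phi$ still has $ord_t(P(\phi))=0_{\mathbb{T}}$, so the same convention excludes it from $HSol_k(P)$, and no other elementary solution need have its order.

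In fact the inclusion fails in this case. Take $\mathfrak{P}=y_1+t\,y_0y_1$, so $k=1$. Then $\mathfrak{P}(\Phi)=\Phi'(1+t\Phi)$, and $\Phi=1$ gives $0\in S(\mathfrak{P},0)$. On the tropical side, write $v_j=ord_t(\varphi^{(j)})$. The value $\min\{v_1,1+v_0+v_1\}$ is finite and attained twice only if $v_0=-1$, so $ord_t(HSol_k(P))=\{-1\}$.

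Proposition~\ref{prop:equaltozerosols} as printed would predict no nonzero degenerate solution here. The correct bound is degree at most $\min_i ord(M_i)-1$, which still gives $Supp(\Phi)\subset[k-1]$. So you must take your second option and amend the statement. Either admit the solutions of Remark~\ref{rem:notation_trop_van}(1) into $HSol(P)$, in which case your one-line argument is correct. Or restrict $S(\mathfrak{P},0)$ to those $\Phi$ with some $E_{M_i}(\Phi)\neq0$.

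\textbf{The map $\pi_k$.} The step you single out as carrying the content cites Lemma~\ref{lemma:new_Hahn}. That lemma is false as stated, and so is the literal formula for the Hahn $\pi_k$. Integers $\geq k$ in $\phi$ lying below $ord_t(\phi_{\mathbb{R}\setminus\mathbb{N}})$ are discarded, although they affect $ord_t(\phi^{(j)})$.

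For example, take $k=1$ and $P=y_1\oplus 1\odot y_0$. The series $\phi=\{0,2,\tfrac52\}$ is a solution, since $ord_t(P(\phi))=\min\{1,1+0\}$ is attained twice. The literal $\pi_1(\phi)=\{0,\tfrac52\}$ is not a solution, since $\min\{\tfrac32,1\}$ is attained once.

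Moreover, suppose $\phi\subset\mathbb{N}$ with $ord_t(\phi)\in[k-1]$ and $\phi\notin\mathcal{P}([k-1])$. This is the support of any ordinary power series solution of order $<k$ having a term of degree $\geq k$. Then $\phi_{\mathbb{R}\setminus\mathbb{N}}=0$ and the formula is meaningless.

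The repair is as follows. Put $\lambda:=\min(\phi\setminus\{0,\ldots,k-1\})$ and $\pi_k(\phi):=(\phi\cap\{0,\ldots,\min\{k-1,\lfloor\lambda\rfloor\}\})\cup\{\lambda\}$. Every exponent of $\phi$ below $\lambda$ lies in $\{0,\ldots,k-1\}$. Hence $ord_t(\phi^{(j)})=ord_t(\pi_k(\phi)^{(j)})$ for $j\le k$, the computation of Lemma~\ref{lemma:new_Hahn} becomes valid, and $\pi_k(\phi)$ is an elementary solution. Your order check then goes through verbatim with $\lambda$ in place of $ord_t(\phi_{\mathbb{R}\setminus\mathbb{N}})$.

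The paper's own proof passes over both points, so your write-up should make both repairs explicit.
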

    \begin{proof}

    Recall that we have a map $\pi_k:HSol(P)\xrightarrow[]{}HSol_k(P)$ satisfying   $ord_t(\varphi)=ord_t(\pi_k(\varphi)\!)$ for all 
    $\varphi\in HSol(P)$, and since the map $Supp:HSol(\mathfrak{P})\xrightarrow[]{}HSol(P)$  from Prop. \ref{prop:trop_Hahn} also commutes with $ord_t$ by \eqref{diag:relevant}, we have a commutative diagram 
    \begin{equation}
    \label{diag:factorization}
\xymatrix{HSol(\mathfrak{P})\ar[r]^{\pi_k\circ Supp}\ar[dr]_{ord_t}&HSol_k(P)\ar[d]^{ord_t}\\
        &\mathbb{T}}\qedhere
    \end{equation}
    \end{proof}

    We have seen that there are two relevant sets of polynomial solutions for an \textsc{otde} $P$, namely $\mu(HSol(P)\!)\subset HSol_k(P)$, together with decompositions $\mu(HSol(P)\!)=\mu(P,[k-1])\sqcup\mu(P,[k-1]^c)$ and $HSol_k(P)=HSol_k(P,[k-1])\sqcup HSol_k(P,[k-1]^c)$, and since $HSol_k(P,[k-1]^c)=\mu(P,[k-1]^c)$, we have 
\begin{equation*}
    HSol_k(P)\setminus \mu(Sol(P)\!)=HSol_k(P,[k-1])\setminus\mu(P,[k-1]).
\end{equation*}

This says that $\mu(Sol(P)\!)\subset HSol_k(P)$ only differ at the level of $Sol_k(P,[k-1])\setminus\mu(P,[k-1])$, and $Sol_k(P,[k-1])$ is finite if and only if $P\notin Y_H(S)$ (where $S=Supp(P)$), by Proposition \ref{thm:H(P)_finite}. This leads us to the study of the chain $$\mu(P,[k-1])\subseteq HSol_k(P,[k-1])$$ of minimal and elementary Hahn $k$-solutions having order in $[k-1]$. Now, if $\varphi\in HSol_k(P,[k-1])$ there is at least one $\psi\in \mu(HSol(P)\!)$ such that $\psi\subset \varphi$, and $ord(\psi)\geq ord(\varphi)$, but it may happen that $ord(\psi)> ord(\varphi)$ for every minimal solution $\psi\subset \varphi$. See Example \ref{ex_Grigoriev}.

 If this does not happen, then we say that $P$ satisfies the {coherent order property}.
\begin{definition}
    We say that $P$ satisfies the {coherent order property} if for any $\varphi\in  HSol_k(P,[k-1])$, there is a unique $\varphi_{0}\in \mu(P,[k-1])$ such that $\varphi_{0}\subset \varphi$ and $ ord_t(\varphi)=ord_t(\varphi_0)$.
\end{definition}

The following result tells us that if $P$ has the {coherent order property}, then $S(\mathfrak{P},0)\subset ord_t(\mu(HSol(P)\!))$.

\begin{proposition}
    If $P$ satisfies the {coherent order property} and $trop(\mathfrak{P})=P$, then we have commutative diagram  
    \begin{equation}
        \xymatrix{HSol(\mathfrak{P})\ar[d]_{ord_t}\ar[dr]&\\
        \mathbb{T}&\mu(HSol(P)\!)\ar[l]^-{ord_t}}
    \end{equation}
\end{proposition}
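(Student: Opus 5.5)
The diagonal arrow will be the composite $\Phi\mapsto\pi_k(\mathrm{Supp}(\Phi))\mapsto\varphi_0$. Here the second step sends an elementary Hahn $k$-solution to the minimal solution supplied by the coherent order property when its order lies in $[k-1]$, and to itself otherwise. To see that the triangle commutes, I will check that $ord_t$ is preserved at each stage. By Proposition~\ref{prop:trop_Hahn}, $\phi=Supp(\Phi)$ lies in $HSol(P)$ and satisfies $ord_t(\phi)=ord_t(\Phi)$ by diagram~\eqref{diag:relevant}. By Lemma~\ref{lemma:new_Hahn} together with Theorem~\ref{prop:ot_basis_Hahn}, the series $\varphi=\pi_k(\phi)$ lies in $HSol_k(P)$ and has $ord_t(\varphi)=ord_t(\phi)$. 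This is exactly the factorization \eqref{diag:factorization} used in the proof of Theorem~\ref{thm:app_tropical_Painleve}, so it only remains to map $HSol_k(P)$ into $\mu(HSol(P)\!)$ compatibly with $ord_t$.

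I will split according to the decomposition $HSol_k(P)=HSol_k(P,[k-1])\sqcup HSol_k(P,[k-1]^c)$.

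\emph{Case $ord_t(\varphi)\notin[k-1]$.} Then $\varphi=t^{\lambda}$ is a single monomial. A nonzero proper subset of a singleton does not exist, and $0\notin HSol(P)$ by Convention~\ref{conv:shape_of_P}, so $\varphi$ is already minimal. This is the identity $HSol_k(P,[k-1]^c)=\mu(P,[k-1]^c)$ noted just before the definition. I send $\varphi$ to itself, and the order is trivially preserved.

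\emph{Case $ord_t(\varphi)\in[k-1]$.} The coherent order property gives a unique $\varphi_0\in\mu(P,[k-1])$ with $\varphi_0\subset\varphi$ and $ord_t(\varphi_0)=ord_t(\varphi)$. I send $\varphi$ to $\varphi_0$; uniqueness makes this a well-defined map. Since $\mu(P,[k-1])\subset\mu(HSol(P)\!)$, the image lands in the right set and the order is preserved.

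Gluing the two cases gives a map $\rho:HSol_k(P)\to\mu(HSol(P)\!)$ with $ord_t\circ\rho=ord_t$. The diagonal arrow is $\rho\circ\pi_k\circ Supp$, and the triangle commutes. As a consequence, $S(\mathfrak{P},0)\subset ord_t(\mu(HSol(P)\!))$.

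The only delicate point is the first case. There I must confirm that a monomial $t^\lambda$ that is a solution is minimal in the full poset $HSol(P)\subset\mathcal{P}_{wo}(\mathbb{R})$, not merely among elementary solutions. This holds because its only proper subset is $\emptyset$, and $\emptyset$ is excluded as a solution by Convention~\ref{conv:shape_of_P}. Everything else is bookkeeping of the equalities of $t$-adic orders established earlier.
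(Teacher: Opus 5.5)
Your proposal is correct and follows the paper's proof. The diagonal arrow is $min\circ\pi_k\circ Supp$: the factorization from Theorem~\ref{thm:app_tropical_Painleve}, followed by the map $\varphi\mapsto\varphi_0$ from the coherent order property, which is the identity on $HSol_k(P,[k-1]^c)=\mu(P,[k-1]^c)$ because a monomial solution is automatically minimal; you state explicitly what the paper leaves implicit. There is one caveat, which comes from the paper's Lemma~\ref{lemma:new_Hahn} and not from your argument: the step $\pi_k(Supp(\Phi))\in HSol_k(P)$ needs the Hahn $\pi_k$ to split $\varphi$ along $[k-1]$ rather than along $\mathbb{N}$ (as written, for $k=3$ and $P=t^2y_0+y_3$ it sends $1+t^5+t^{11/2}\in HSol(P)$ to $1+t^{11/2}\notin HSol(P)$), and once $\varphi_{\mathbb{N}},\varphi_{\mathbb{R}\setminus\mathbb{N}}$ are replaced by $\varphi_{[k-1]},\varphi_{[k-1]^c}$ your argument goes through unchanged.
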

\begin{proof}
We want to define a map $min: HSol_k(P)\xrightarrow[]{}\mu(Sol(P)\!)$ such that $ord_t(min(\varphi)\!)=ord_t(\varphi)$. For $\varphi\in HSol_k(P)$ such that $ord_t(\varphi)\in[k-1]$, if $P$ satisfies the {coherent order property}, then the assignment $\varphi\mapsto\varphi_0$ defines a map $min:HSol_k(P)\xrightarrow[]{}\mu(HSol(P)\!)$, so we have a diagram \begin{equation}
    \label{diag:factorization:2}
\xymatrix{HSol(\mathfrak{P})\ar[r]^{\pi_k\circ Supp}\ar[d]_{ord_t}\ar[dr]&HSol_k(P)\ar[d]^{min}\\
        \mathbb{T}&\mu(HSol(P)\!)\ar[l]^-{ord_t}}
    \end{equation}
    and this finishes the proof.
\end{proof}

In the linear case,  if $P$ has order $k$ and it is generic, then $\mu(Sol(P)\!)=\{t^p+t^{q(p)}\}_{p\in[k-1]}$ with $\{q(p)\geq k\}_{p\in[k-1]}$ all distinct. Then $P$ satisfies the {coherent order property} if and only if $q(0)<q(1)<\cdots <q(k-1)$. Indeed, if $q(p')<q(p)$ for some $p<p'$, then the  elementary $k$-solution corresponding to $\varphi(t)=t^p+t^{p'}+t^{q(p')}+t^{q(p)}$ is $t^p+t^{p'}+t^{q(p')}$, which contains the minimal solution $t^{p'}+t^{q(p')}$, with $p<p'$.

\begin{example}
\label{ex_Grigoriev}

The following concrete example was found by D. Grigoriev. Let $P=t^5y_0+ t^2y_2+ y_3$. By computing $ord_tP(t^i)$ for $i=0,1,2,3,5,8,9$: 
\begin{table}[!htb]
    \centering
    \begin{tabular}{c|c}
    $i$&  $ord_t(P(t^i)\!)$\\\hline
    0 & $5\oplus\infty\oplus\infty$\\
    1 & $6\oplus\infty\oplus\infty$\\
    2 & $7\oplus2\oplus\infty$\\
    3 & $8\oplus3\oplus0$\\
    \end{tabular}
    \quad\quad
    \begin{tabular}{c|c}
    $i$&  $ord_t(P(t^i)\!)$\\\hline
    5 & $10\oplus5\oplus2$\\
    8 & $13\oplus8\oplus5$\\
    9 & $14\oplus9\oplus6$\\
    \end{tabular}
    \quad  \begin{tabular}{c|c}
    $\varphi$&  $ord_t(P(\varphi)\!)$\\\hline
    $1+t^8$ & $5\oplus8\oplus5$\\
    $t+t^9$ & $6\oplus9\oplus6$\\
    $t^2+t^5$ & $7\oplus2\oplus2$\\
    \end{tabular}
    \caption{Computations for $P=t^5y_0+ t^2y_2+ y_3$.}
    \label{tab:placeholder}
\end{table}
thus we see that $P$ is generic, and that $\mu(Sol(P)=\{1+t^8, t+t^9, t^2+t^5\}$. Thus the elementary $3$-solution associated to  $t+t^2+t^5+t^9$ is $t+t^2+t^5$, which is not minimal, but contains the minimal solution $t^2+t^5$.
\end{example}

Let $S$ be a support set. We finish with a procedure to check if $HSol_k(P)=HSol_k(P,[k-1])\sqcup HSol_k(P,[k-1]^c)$ is finite or not:
\begin{enumerate}
\item  we check that $P\notin Y(S)$ from \eqref{eq:deg_sols}, for which we need at most $2^k-1=\#[\mathcal{P}([k-1])\setminus\emptyset]$ verifications. 
\item Once that $P\notin Y(S)$, all the members of $HSol_k(P,[k-1])$ are of the form $(R,\lambda)$, with $\lambda\in I_a$ for $a\in[k-1]$ and $\emptyset\neq R\subset [a]$, and we need to verify that $P\notin Z_{R,a}$ from prop. \ref{thm:H(P)_finite}, which leaves us with $(2-1)+\cdots+(2^{k}-1)=2^{k+1}-k-2$ verifications.
    \item Finally, a procedure to compute $HSol_k(P,[k-1]^c)=\mu(P,[k-1]^c)$ was given 
    \begin{enumerate}
        \item at the end of Section \ref{sec:ths}, if $S$ is  homogeneous.
        \item at the end of Section \ref{sec:ordatleastk}, if $S$ is not homogeneous.
    \end{enumerate}
    
\end{enumerate}

\addtocontents{toc}{\setcounter{tocdepth}{-1}} 
\section*{Acknowledgments}
\addtocontents{toc}{\setcounter{tocdepth}{3}}

The authors wish to thank D. Grigoriev and P. Rubí Pantaleón for valuable discussions during the elaboration of this paper.

\end{document}